\DeclareSymbolFontAlphabet{\mathbbm}{bbold}
\DeclareSymbolFontAlphabet{\mathbb}{AMSb}
\DeclareMathAlphabet\mathbfit{OML}{cmm}{b}{it}
\let\setminus\smallsetminus
\newlist{enumarabic}{enumerate}{1}
\setlist[enumarabic]{font=\normalfont,label=(\arabic*),leftmargin=0.3in}
\newlist{enumroman}{enumerate}{1}
\setlist[enumroman]{font=\normalfont,label=(\roman*),leftmargin=0.3in}
\numberwithin{equation}{section}
\theoremstyle{plain}
\newtheorem{theorem}{Theorem}[section]
\newtheorem{proposition}[theorem]{Proposition}
\newtheorem{lemma}[theorem]{Lemma}
\newtheorem{corollary}[theorem]{Corollary}
\theoremstyle{definition}
\newtheorem{remark}[theorem]{Remark}
\newtheorem{example}[theorem]{Example}
\theoremstyle{remark}
\newtheorem*{acknowledgements}{Acknowledgements}
\let\newterm\emph
\def\arxiv#1{\href{http://arxiv.org/abs/#1}{\texttt{arXiv:#1}}}
\def\cf{\emph{cf.}}
\def\N{\mathbb N}
\def\R{\mathbb R}
\def\kk{\Bbbk}
\let\epsilon\varepsilon
\let\phi\varphi
\let\emptyset\varnothing
\DeclareMathOperator{\Hom}{Hom}
\def\id{\mathrm{id}}
\let\shuffle\nabla
\def\Simp{\mathbbm{\Delta}}
\def\II{\mathbb{I}}
\def\TopI{I}
\let\Cube\square
\def\sSet{\mathrm{sSet}}
\DeclareMathOperator{\TT}{\mathbfit{T}}
\def\tt{\mathbfit{t}}
\def\ff{q}
\def\FF{F}
\def\FFF{f}
\def\PSI{p}
\def\OM{\boldsymbol{\Omega}}
\def\Deltabar{\bar\Delta}
\def\SS{\mathbfit{S}}
\def\ii{\mathbfit{i}}
\def\jj{\mathbfit{j}}
\def\kkk{\mathbfit{k}}
\def\tSz{t_{\mathrm{Sz}}}
\def\fSz{f_{\mathrm{Sz}}}
\DeclareMathOperator{\Sz}{Sz}
\DeclareMathOperator{\Shuff}{Shuff}
\def\susp{\mathbf{s}}
\def\desusp{\susp^{-1}}
\def\deg#1{|#1|}
\def\TWO{\mathbf{2}}
\def\CobarEl#1{\langle#1\rangle}
\def\bigCobarEl#1{\bigl\langle#1\bigr\rangle}
\def\bbOM{\mathchoice{\scalebox{1.05}{$\displaystyle\mathbbm{\Omega}$}}%
  {\scalebox{1.05}{$\textstyle\mathbbm{\Omega}$}}%
  {\scalebox{1.05}{$\scriptstyle\mathbbm{\Omega}$}}%
  {\scalebox{1.05}{$\scriptscriptstyle\mathbbm{\Omega}$}}}
\begin{document}

\title[The Szczarba map and the cubical cobar construction]{The Szczarba map and\\the cubical cobar construction}
\author{Matthias Franz}
\thanks{The author was supported by an NSERC Discovery Grant.}
\address{Department of Mathematics, University of Western Ontario,
  London, Ont.\ N6A\;5B7, Canada}
\email{mfranz@uwo.ca}

\subjclass[2020]{Primary 55U10; secondary 55P35, 57T30}

\begin{abstract}
  We consider a twisting function from a \(1\)-reduced simplicial set~\(X\) to a simplicial group~\(G\).
  We prove in detail that the associated Szczarba operators induce a simplicial map from the triangulation
  of the cubical cobar construction of~\(X\) to~\(G\). This confirms a result due to
  Minichiello--Rivera--Zeinalian and gives, as pointed out by these authors,
  a conceptual proof of the fact that the dga map~\(\OM\,C(X) \to C(G)\)
  induced by Szczarba's twisting cochain is comultiplicative.
\end{abstract}

\maketitle

\section{Introduction}

Let \(X\) be a \(1\)-reduced simplicial set, and let \(\tau\colon X_{>0}\to G\) be a twisting function.
An important example is the universal twisting function~\(\tau_{X}\colon X_{>0}\to GX\), where \(GX\) is the Kan loop group of~\(X\).
Based on~\(\tau\), Szczarba~\cite{Szczarba:1961} defined an explicit twisting cochain
\begin{equation}
  \tSz\colon C(X)\to C(G),
\end{equation}
which gives rise to a morphism of differential graded algebras (dgas)
\begin{equation}
  \label{eq:Omega-CX-CG}
  \fSz\colon\OM\,C(X) \to C(G),
  \qquad
  \CobarEl{x_{1}|\dots|x_{k}} \mapsto \tSz(x_{1})\cdots \tSz(x_{k}).
\end{equation}
Here \(C(-)\) denotes normalized chains with coefficients in some commutative ring~\(\kk\), and \(\OM\,C(X)\) is the reduced cobar construction.

The \newterm{Szczarba map}~\(\fSz\) has the following form:
An \((n+1)\)-simplex~\(x\in X\) defines an element~\(\CobarEl{x}\) of degree~\(n\) in~\(\OM\,C(X)\).
Its image~\(\fSz(\CobarEl{x})=\tSz(x)\in C(G)\) is, for~\(n\ge1\), the signed sum of certain \(n\)-simplices~\(\Sz_{\ii}(x)\in G\).
(For~\(n=0\) the formula is slightly modified, see~\eqref{eq:szczarba-twisting-cochain}.)
The \newterm{Szczarba operators}~\(\Sz_{\ii}\colon X_{n+1}\to G_{n}\) are indexed
by a set~\(\SS_{n}\) of \(n!\)~integer sequences~\(\ii\) of length~\(n\).

Now both \(C(G)\) and~\(\OM\,C(X)\) are not only dgas, but also differential graded coalgebras (dgcs);
the diagonal of~\(\OM\,C(X)\) is defined in terms of so-called homotopy Gerstenhaber operations on~\(C(X)\).
In~\cite{Franz:szczarba2} the present author has shown that the map~\eqref{eq:Omega-CX-CG} is compatible with the dgc structures,
so that \eqref{eq:Omega-CX-CG} becomes a morphism of dg~bialgebras.
In the more general setting of reduced simplicial sets,
Minichiello--Rivera--Zeinalian~\cite[Cor.~1.4]{MinichielloRiveraZeinalian:2023}
have presented the following conceptual explanation of these facts:

Out of the simplicial set~\(X\) one can define a cubical monoid~\(\bbOM X\),
called the cubical cobar construction. The cubes in~\(\bbOM X\) correspond to strings of simplices in~\(X\) decorated with formal degenerations.
The normalized cubical chain complex~\(C(\bbOM X)\) turns out to be isomorphic to~\(\OM\,C(X)\) as a dg~bialgebra, \cf~\Cref{thm:iso-cobars}.
In light of this isomorphism, we write the \(n\)-cube in~\(\bbOM X\) determined by an \((n+1)\)-simplex~\(x\in X\)
again as~\(\CobarEl{x}\).

Let \(\TT \bbOM X\) be the formal triangulation of the cubical set~\(\bbOM X\). It is a simplicial set;
each non-degenerate \(n\)-cube in~\(\bbOM X\) corresponds to \(n!\)~non-degenerate \(n\)-simplices in~\(\TT \bbOM X\),
see \Cref{sec:triangulate-cubical-sets}.
According to~\cite{MinichielloRiveraZeinalian:2023},
the \(n!\)~terms~\(\Sz_{\ii}(x)\) appearing in~\(\fSz(\CobarEl{x})\)
are the images of the \(n!\)~simplices in~\(\TT \bbOM X\) corresponding
to the \(n\)-cube~\(\CobarEl{x}\) under a simplicial map~\(\FFF\colon\TT \bbOM X\to G\).
Hence \(\fSz\) factors as
\begin{equation}
  \OM\,C(X) \cong C(\bbOM X) \stackrel{\tt}{\longrightarrow} C(\TT \bbOM X) \stackrel{C(\FFF)}{\longrightarrow} C(G).
\end{equation}
The isomorphism on the left is one of dg~bialgebras.
The map~\(\tt\) in the middle is essentially the shuffle map, which is known to be
a quasi-isomorphism of dg~bialgebras in this case, see \Cref{thm:triangulation-product-group}.
The rightmost map is so, too, because it is induced from a morphism of simplicial monoids.
Hence \eqref{eq:Omega-CX-CG} is in particular a dgc map. QED.

Clearly, not every assignment of a simplex in~\(G\) to each simplex in the triangulation
of a cube in~\(\bbOM X\) will define a simplicial map~\(\TT\bbOM X\to G\). The assignment must respect
both the cubical structure of~\(\bbOM X\) and the simplicial structure of the triangulation of each cube.
After reading \cite{MinichielloRiveraZeinalian:2023}, the present author was still unsure
as to which properties of the Szczarba operators ensure that this construction is actually well-defined.
The present paper may thus be considered as the author's attempt to work out the details.
This is inverse to the approach taken by Minichiello--Rivera--Zeinalian in~\cite{MinichielloRiveraZeinalian:2023}
and further elaborated on 
in~\cite{MinichielloRiveraZeinalian:szczarba}.
Starting from categorical considerations, these authors define a simplicial map~\(\TT\bbOM X\to G\)
and then show that it induces Szczarba's twisting cochain.
In the present paper, we instead take the Szczarba operators as our starting point and prove that the map~\(\TT\bbOM X\to G\)
induced by them is simplicial.

We start in \Cref{sec:prelim} by reviewing
the triangulation of a cubical set in detail and setting up several bijections to translate between permutations
and Szczarba's indexing scheme. It then turns out that everything hinges on two results:
The first one, contained in Szczarba's original paper and later highlighted by Hess--Tonks~\cite{HessTonks:2006},
describes how the Szczarba operators interact with face operators, see \Cref{thm:szczarba-d}.
The second one (\Cref{thm:szczarba-s}) is about their interaction with degeneracy operators;
this formula was established by the present author in~\cite{Franz:szczarba2}.
As a consequence, we can confirm Minichiello--Rivera--Zeinalian's result in our context.

\begin{theorem}
  \label{thm:intro:main}
  Let \(X\) be a \(1\)-reduced simplicial set and \(G\) a simplicial group,
  and let \(\tau\colon X_{>0}\to G\) be a twisting function.
  The Szczarba operators~\(\Sz_{\ii}\) induce a morphism of simplicial monoids
  \begin{equation*}
    \FFF\colon\TT\bbOM X\to G.
  \end{equation*}
  Moreover, the composition
  \begin{equation*}
    \OM\,C(X) \cong C(\bbOM X) \stackrel{\tt}{\longrightarrow} C(\TT \bbOM X) \stackrel{C(\FFF)}{\longrightarrow} C(G),
  \end{equation*}
  is the Szczarba map~\(\fSz\) induced by Szczarba's twisting cochain~\(\tSz\).
\end{theorem}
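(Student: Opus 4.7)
The plan is to define \(\FFF\) explicitly on \(\TT\bbOM X\) via the Szczarba operators, verify the required structural compatibilities, and then identify the composition in the statement with \(\fSz\). Every non-degenerate \(n\)-cube of \(\bbOM X\) is a string \(\CobarEl{x_1|\dots|x_k}\) with \(x_j \in X_{n_j+1}\) and \(\sum n_j = n\), and the \(n!\) triangulating \(n\)-simplices of such a cube correspond to permutations of~\(\{1,\dots,n\}\). Using the bijections between \(\SS_m\) and symmetric groups set up in the preceding combinatorial sections, every such permutation encodes a shuffle of blocks of sizes~\(n_1,\dots,n_k\) together with a Szczarba index \(\ii_j \in \SS_{n_j}\) for each block. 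I would therefore declare
\begin{equation*}
  \FFF\bigl(\CobarEl{x_1|\dots|x_k};\,\ii_1,\dots,\ii_k\bigr) \;=\; \Sz_{\ii_1}(x_1)\cdots\Sz_{\ii_k}(x_k) \;\in\; G_n
\end{equation*}
on each triangulating \(n\)-simplex, and extend using the simplicial structure.

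Three compatibilities then need to be checked. First, the map must descend from labelled simplices to~\(\TT\bbOM X\) by respecting the cubical face and degeneracy relations of~\(\bbOM X\); this reduces to the twisting-function identities together with standard bookkeeping for degenerate factors. More substantially, the simplicial face and degeneracy identities within each triangulated cube must hold. Under the permutation-to-\(\SS\) bijection, these reduce precisely to the face formula of Szczarba and Hess--Tonks (\Cref{thm:szczarba-d}) and to the degeneracy formula (\Cref{thm:szczarba-s}). Finally, the cubical group multiplication on~\(\bbOM X\) (concatenation of strings) is sent to multiplication in~\(G\) by the very form of the definition, so \(\FFF\) is a simplicial group homomorphism.

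For the second assertion, the isomorphism \(\OM\,C(X) \cong C(\bbOM X)\) of \Cref{thm:iso-cobars} identifies \(\CobarEl{x_1|\dots|x_k}\) with the corresponding product cube, and the shuffle map~\(\tt\) expands this cube as the signed sum of its \(n!\) triangulating \(n\)-simplices. Applying \(C(\FFF)\) yields
\begin{equation*}
  \textstyle\sum_{\ii}\,\pm\, \Sz_{\ii_1}(x_1)\cdots\Sz_{\ii_k}(x_k) \;=\; \tSz(x_1)\cdots\tSz(x_k) \;=\; \fSz\bigl(\CobarEl{x_1|\dots|x_k}\bigr),
\end{equation*}
by the definition of~\(\tSz\) and the multiplicativity of~\(\fSz\); the case \(n=0\) is handled by the modified formula for~\(\tSz\) mentioned after~\eqref{eq:Omega-CX-CG}. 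The main obstacle is the case analysis for the simplicial identities of~\(\FFF\): the combinatorics of how faces and degeneracies of a triangulating simplex relate to neighbouring triangulating simplices, and to faces of the ambient cube, must correspond sign-for-sign and index-for-index with the recursive formulas for the Szczarba operators. The preparatory bijections between~\(\SS_n\) and permutation groups are designed precisely so that this correspondence becomes transparent.
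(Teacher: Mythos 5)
Your strategy is recognisably the paper's, and you correctly identify the decisive ingredients (the $\SS_n$-to-$S_n$ bijection, the face formula of \Cref{thm:szczarba-d}, the degeneracy formula of \Cref{thm:szczarba-s}, and multiplicativity), but the explicit formula you write down for $\FFF$ contains a gap that is not merely notational. You set
\begin{equation*}
  \FFF\bigl(\CobarEl{x_1|\dots|x_k};\,\ii_1,\dots,\ii_k\bigr) = \Sz_{\ii_1}(x_1)\cdots\Sz_{\ii_k}(x_k)\in G_n,
\end{equation*}
but $\Sz_{\ii_j}(x_j)$ lies in $G_{n_j}$, so for $k\ge2$ the product on the right is not an element of $G_n$ at all. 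The shuffle of blocks that you yourself note is part of the data encoded by a permutation $\pi\in S_n$ does not merely re-index the factors: it supplies degeneracy operators. Concretely, when the top simplex $u_\pi\in\II^n$ is split under $\II^n=\II^{n_1}\times\cdots\times\II^{n_k}$, each factor is a degenerate simplex; for $k=2$, \Cref{thm:decomposition-simplex-product-cube} gives $u_\pi=(s_{\beta-1}u_\sigma,\,s_{\alpha-1}u_\tau)$ and the required value is $s_{\beta-1}\Sz_\sigma x_1\cdot s_{\alpha-1}\Sz_\tau x_2$. Dropping the degeneracies also breaks the count: there are $n!$ triangulating $n$-simplices, but only $n_1!\cdots n_k!$ tuples $(\ii_1,\dots,\ii_k)$, so the labelling you propose is not even a bijection. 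The same confusion reappears in part (ii), where $\tSz(x_1)\cdots\tSz(x_k)$ denotes the algebra product in $C(G)$ (the Eilenberg--Zilber shuffle product, with degeneracies and Koszul signs), not a product in $G_n$; equating it with $\sum_\ii\pm\Sz_{\ii_1}(x_1)\cdots\Sz_{\ii_k}(x_k)$ requires exactly the bookkeeping that has been suppressed.

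The fix, and the route the paper takes, is to define $\FF$ on a \emph{single} cube $\CobarEl{x}$ only, where $\FF(\CobarEl{x};u_\pi)=\Sz_\pi x$ does land in $G_n$ and \Cref{thm:properties-sz-new-d}\,\ref{q2} together with \Cref{thm:triang-map} makes it a simplicial map $\II_{\CobarEl{x}}\to G$. One then extends to $\CobarEl{x}_I$, to products, and finally to $\bbOM X$ by multiplicativity; because each $\FF_{\CobarEl{x_j}}$ is already simplicial, the degeneracies coming from the shuffle are inserted automatically. The genuine work is then compatibility with the generating cube-category morphisms $\sigma_i$, $\delta^\epsilon_i$, $\gamma_i$ (checked in \Cref{thm:F-s-CobarEl-x-u} and \Cref{thm:F-d-gamma-CobarEl-x-u-pi} via \Cref{thm:properties-sz-new-d,thm:properties-sz-new-s}), and part (ii) follows by checking only the generator $\CobarEl{x}$ since both the composition and $\fSz$ are algebra maps. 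Your direct definition would have to carry the shuffle data and the degeneracies through every one of these checks, which is strictly harder than the inductive approach.
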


A more precise version will be given in \Cref{thm:main}.

\begin{corollary}
  The map~\(\fSz\colon\OM\,C(X) \to C(G)\) is a morphism of dg~bialgebras.
\end{corollary}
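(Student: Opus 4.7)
The plan is to read the corollary directly off \Cref{thm:intro:main}, which exhibits $\fSz$ as the composition
\begin{equation*}
  \OM\,C(X) \cong C(\bbOM X) \stackrel{\tt}{\longrightarrow} C(\TT \bbOM X) \stackrel{C(\FFF)}{\longrightarrow} C(G).
\end{equation*}
It therefore suffices to check that each of the three constituents is a morphism of dg~bialgebras. Compatibility with the algebra structure is already visible from the defining formula~\eqref{eq:Omega-CX-CG}; the new content of the corollary is coalgebra compatibility, and this compatibility will drop out of the factorization for free.

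For the first map, the isomorphism $\OM\,C(X) \cong C(\bbOM X)$ is one of dg~bialgebras by \Cref{thm:iso-cobars}. For the middle map, $\tt$ is essentially the shuffle map arising from the triangulation of each cube; since $\bbOM X$ carries a cubical group structure, \Cref{thm:triangulation-product-group} asserts that $\tt$ is a morphism (even a quasi-isomorphism) of dg~bialgebras. For the rightmost map, $\FFF\colon\TT\bbOM X\to G$ is a morphism of simplicial groups by \Cref{thm:intro:main}, and passing to normalized chains turns any such morphism into a morphism of dg~bialgebras, where both sides are equipped with the shuffle product and the Alexander--Whitney coproduct. Composing these three dg~bialgebra morphisms yields $\fSz$, which proves the corollary.

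The hard part is therefore not the corollary itself but the theorem it relies on. Assembling the Szczarba operators cube-by-cube into a candidate map $\FFF$ is easy; the delicate step—handled inside \Cref{thm:intro:main}—is verifying that these local assignments are simultaneously compatible with the cubical face and degeneracy structure of $\bbOM X$ and with the simplicial structure of each triangulated cube, so that they actually fit together into a map of simplicial sets (and in fact of simplicial groups). Once that is done, the dg~bialgebra statement is purely formal.
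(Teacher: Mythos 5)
Your proof is correct and follows exactly the paper's line of reasoning, which is sketched in the introduction just before \Cref{thm:intro:main}: the factorization through $C(\bbOM X)$ and $C(\TT\bbOM X)$, with the isomorphism handled by \Cref{thm:iso-cobars}, the map~$\tt$ by \Cref{thm:triangulation-product-group}, and $C(\FFF)$ by functoriality of the Pontryagin product and Alexander--Whitney coproduct for morphisms of simplicial groups. (One small terminological nit: the product on $C(G)$ is the Pontryagin product, \ie\ the shuffle map $C(G)\otimes C(G)\to C(G\times G)$ followed by the group multiplication, rather than a ``shuffle product'' per~se; the naturality argument you invoke is unaffected.)
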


We conclude this introduction by mentioning a recent preprint of Cai~\cite{Cai:twisting}
(which has appeared after the first version of the present paper).
Cai constructs a twisting cochain based on a careful analysis of
Berger's work on simplicial prisms and loop spaces \cite{Berger:1995}.
While Cai's twisting cochain in similar in structure to Szczarba's,
it does not involve the group inversion on~\(G\).
As a result, Cai's approach works more generally for simplicial monoids,
which answers a question raised by Rivera~\cite{Rivera:email}.

Given a simplicial \(G\)-set~\(F\), one can form the twisted Cartesian product~\(X\times_{\tau}F\)
(the simplicial analogue of an associated bundle) as well as the twisted tensor product~\(C(X) \otimes_{t_{Sz}} C(F)\),
\cf~\cite[Secs.~2.4~\&~8]{Franz:szczarba2}. In addition to the map~\(\fSz\), Szczarba introduced a twisted shuffle map
\begin{equation}
  \psi\colon C(X) \otimes_{t_{Sz}} C(F) \to C(X\times_{\tau}F)
\end{equation}
and showed that it is a quasi-isomorphism of complexes \cite[Thm.~2.4]{Szczarba:1961}.
The homotopy Gerstenhaber structure of~\(C(X)\) also induces a dgc structure on the twisted tensor product,
see again \cite[Sec.~8]{Franz:szczarba2}.
In~\cite[Thm.~1.3]{Franz:szczarba2} the current author showed, again via a direct computation,
that \(\psi\) is actually a morphism of dgcs.
Minichiello--Rivera--Zeinalian~\cite{MinichielloRiveraZeinalian:2023} do not consider
the twisted shuffle map. Cai shows that his version of~\(\psi\) is a chain map;
the dgc structure is not discussed in~\cite{Cai:twisting}.

\begin{acknowledgements}
  I am grateful to Jim Stasheff for several discussion during the preparation of this work.
  I also thank Chris Kapulkin and Manuel Rivera for comments on an earlier version of this paper,
  and Li Cai for explaining his work to me.
\end{acknowledgements}

\section{Preliminaries}
\label{sec:prelim}

Throughout this paper, we fix a commutative ring~\(\kk\) with unit.

\subsection{Permutations}

Let \(\pi_{1}\in S_{n_{1}}\),~\ldots,~\(\pi_{m}\in S_{n_{m}}\) be permutations and set \(n=n_{1}+\dots n_{m}\).
The \newterm{concatenation}~\(\pi_{1}\sqcup\dots\sqcup\pi_{m}\in S_{n}\) is the induced permutation
on the disjoint union of their index sets, relabelled as~\(\{1,\dots,n\}\) in increasing order.
For example,
\begin{equation}
  \begin{pmatrix}
    1 & 2 \\ 2 & 1
  \end{pmatrix}
  \sqcup
  \begin{pmatrix}
    1 & 2 & 3 \\ 3 & 1 & 2
  \end{pmatrix}
  \sqcup
  \begin{pmatrix}
    1 \\ 1
  \end{pmatrix}
  =
  \begin{pmatrix}
    1 & 2 & 3 & 4 & 5 & 6 \\
    2 & 1 & 5 & 3 & 4 & 6
  \end{pmatrix}.
\end{equation}

\begin{lemma}
  \label{thm:Psi-k-l}
  For any~\(k\),~\(l\ge0\) there is a bijection
  \begin{equation*}
    \Psi_{k,l}\colon \Shuff(k,l)\times S_{k}\times S_{l} \to S_{k+l},
    \quad
    \bigl((\alpha,\beta),\sigma,\tau\bigr) \mapsto (\sigma\sqcup\tau)\circ\pi_{\alpha,\beta},
  \end{equation*}
  where
  \begin{equation*}
    \pi_{\alpha,\beta} = \begin{pmatrix}
      \alpha_{1} & \cdots & \alpha_{k} & \beta_{1} & \cdots & \beta_{l} \\
      1 & \cdots & k & k+1 & \cdots & k+l
    \end{pmatrix} \in S_{k+l}.
  \end{equation*}
\end{lemma}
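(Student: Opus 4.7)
The plan is to exhibit an explicit two-sided inverse to~$\Psi_{k,l}$. As a sanity check, both sides have the same cardinality: $|\Shuff(k,l)|\cdot k!\cdot l! = \binom{k+l}{k}k!\,l! = (k+l)! = |S_{k+l}|$, so it would in fact suffice to check injectivity or surjectivity, but constructing the inverse makes the argument completely transparent.

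The guiding observation is that under the map $\pi = (\sigma\sqcup\tau)\circ\pi_{\alpha,\beta}$, the elements $\alpha_1,\dots,\alpha_k$ are precisely those sent into $\{1,\dots,k\}$, while $\beta_1,\dots,\beta_l$ are precisely those sent into $\{k+1,\dots,k+l\}$. Indeed, by construction $\pi(\alpha_i) = \sigma(i)$ and $\pi(\beta_j) = k+\tau(j)$. This determines the shuffle from~$\pi$ and then forces $\sigma$ and~$\tau$.

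Accordingly, I would define the candidate inverse as follows. Given $\pi\in S_{k+l}$, set $A = \pi^{-1}(\{1,\dots,k\})$ and $B = \pi^{-1}(\{k+1,\dots,k+l\})$; these partition $\{1,\dots,k+l\}$ into sets of sizes $k$ and~$l$. List their elements in increasing order as $\alpha_1<\dots<\alpha_k$ and $\beta_1<\dots<\beta_l$ to obtain a unique $(k,l)$-shuffle $(\alpha,\beta)$. Then define $\sigma\in S_k$ and $\tau\in S_l$ by $\sigma(i) = \pi(\alpha_i)$ and $\tau(j) = \pi(\beta_j)-k$; both are bijections onto $\{1,\dots,k\}$ and $\{1,\dots,l\}$ respectively, by the defining property of $A$ and~$B$.

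Finally, I would verify that both compositions are the identity. Starting from a triple $((\alpha,\beta),\sigma,\tau)$, the associated $\pi$ satisfies $\pi(\alpha_i)\in\{1,\dots,k\}$ and $\pi(\beta_j)\in\{k+1,\dots,k+l\}$, so the inverse recovers the same shuffle (since the $\alpha_i$ and $\beta_j$ are already in increasing order) and the same $\sigma$,~$\tau$ by the defining formulas. Conversely, starting from $\pi\in S_{k+l}$, the recipe above chases $\alpha_i\mapsto i\mapsto \sigma(i)=\pi(\alpha_i)$ and $\beta_j\mapsto k+j\mapsto k+\tau(j)=\pi(\beta_j)$ through the composition $(\sigma\sqcup\tau)\circ\pi_{\alpha,\beta}$, which therefore equals~$\pi$. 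I do not anticipate a real obstacle here: the whole argument is essentially a bookkeeping exercise matching the two-line notation with the partitioning data, and the bijection is really just the statement that choosing a permutation is the same as choosing which $k$ positions get sent to $\{1,\dots,k\}$ together with an internal permutation of each block.
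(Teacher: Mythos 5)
Your proof is correct and takes essentially the same route as the paper: it constructs the explicit inverse map by reading off $\alpha$ and $\beta$ as the preimages of $\{1,\dots,k\}$ and $\{k+1,\dots,k+l\}$ under $\pi$, and recovering $\sigma$, $\tau$ from the values of $\pi$ on those blocks. The paper's proof merely describes this inverse in words; you spell out the defining formulas $\sigma(i)=\pi(\alpha_i)$, $\tau(j)=\pi(\beta_j)-k$ and verify both composites, which is more detail but the same idea.
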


\begin{proof}
  For the inverse map, let \(\alpha\) be the increasing sequence of all~\(j\in\{1,\dots,k+l\}\) such that \(\pi(j)\in\{1,\dots,k\}\)
  and likewise \(\beta\) the one with~\(\pi(j)\in\{k+1,\dots,k+l\}\).
  The permutations~\(\sigma\) and~\(\tau\) encode the order in which the elements of~\(\alpha\) and~\(\beta\)
  appear in~\(\pi\).
\end{proof}

\begin{example}
  The permutation
  \begin{equation}
    \pi = \begin{pmatrix}
      1 & 2 & 3 & 4 & 5 \\
      3 & 2 & 5 & 4 & 1
    \end{pmatrix}
  \end{equation}
  corresponds under~\(\Psi_{2,3}\) to
  \begin{equation}
    \alpha = (2, 5),
    \quad
    \beta = (1, 3, 4),
    \quad
    \sigma = \begin{pmatrix}
      1 & 2 \\
      2 & 1
    \end{pmatrix},
    \quad
    \tau = \begin{pmatrix}
      1 & 2 & 3 \\
      1 & 3 & 2
    \end{pmatrix}.
  \end{equation}
\end{example}

\subsection{Simplicial sets}

For the convenience of the reader, we recall the definition, \cf~\cite[Def.~1.1]{May:1968}.
A \newterm{simplicial set} is an \(\N\)-graded set~\(X\) with face operators
\begin{equation}
  d_{i}\colon X_{n}\to X_{n-1} \qquad\text{for~\(0\le i\le n\) where \(n\ge 1\)}
\end{equation}
and degeneracy operators
\begin{equation}
  s_{i}\colon X_{n}\to X_{n+1} \qquad\text{for~\(0\le i\le n\) where \(n\ge 0\)}
\end{equation}
satisfying the simplicial identities
\begin{align}
  d_{i}\,d_{j} &= d_{j-1}\,d_{i} & & \text{if \(0\le i<j\le n\) (\(n\ge 2)\),} \\
  s_{i}\,s_{j} &= s_{j+1}\,s_{i} & & \text{if \(0\le i\le j\le n\),} \\
  d_{i}\,s_{j} &= \begin{cases}
    s_{j-1}\,d_{i} & \text{if \(i<j\),} \\
    \id & \text{if \(j\le i\le j+1\),} \\
    s_{j}\,d_{i-1} & \text{if \(i>j+1\),}
  \end{cases} & & \text{where \(0\le i\le n+1\), \(0\le j\le n\).}
\end{align}
The elements of~\(X_{n}\) are called \(n\)-simplices.
An \(n\)-simplex~\(x\) is degenerate if it is of the form~\(x=s_{i}\,x'\) for some \((n-1)\)-simplex~\(x'\) and some~\(0\le i\le n-1\).

The normalized simplicial chain complex~\(C(X)\) with coefficients in~\(\kk\)
is obtained from the non-normalized complex by dividing out the subcomplex spanned
by the degenerate simplices.
The differential of an \(n\)-simplex~\(x\) is
\begin{equation}
  d\,x = \sum_{i=0}^{n}(-1)^{i}\,d_{i}x
\end{equation}
for~\(n\ge1\) and \(dx=0\) for~\(n=0\). The complex~\(C(X)\) becomes a differential graded coalgebra (dgc)
via the diagonal
\begin{equation}
  \Delta\,x = \sum_{i=0}^{n} x(0\dots i)\otimes x(i\dots n)
\end{equation}
for~\(x\in X_{n}\) and the augmentation sending every \(0\)-simplex to~\(1\in\kk\).
Above,
\begin{equation}
  x(0\dots i) = d_{i+1}\cdots d_{n}\,x
  \qquad\text{and}\qquad
  x(i\dots n+1) = d_{0}\cdots d_{i-1}\,x
\end{equation}
denote, respectively, the \(i\)-dimensional front face and \((n-i)\)-dimensional back face of~\(x\).

Let \(X\) and~\(Y\) be simplicial set, and
let \(x\in X\) be a \(k\)-simplex and \(y\in Y\) an \(l\)-simplex where \(k\),~\(l\ge0\).
Set \(n=k+l\). The shuffle map 
\begin{equation}
  \label{eq:shuffle-map}
  \shuffle\colon C(X)\otimes C(Y) \to C(X\times Y)
\end{equation}
sends \(x\otimes y\) to the signed sum of \(n\)-simplices
\begin{equation}
  \label{eq:def-ez}
  \sum_{\!\!\!\!(\alpha,\beta)\in\Shuff(k,l)\!\!\!\!}
  (-1)^{(\alpha,\beta)} \bigl(s_{\beta_{l}-1}\cdots s_{\beta_{1}-1}x, s_{\alpha_{k}-1}\cdots s_{\alpha_{1}-1} y\bigr)
\end{equation}
where \(\Shuff(k,l)\) denotes the set of \((k,l)\)-shuffles, that is, the partitions
of~\(\{1,\dots,n\}\) into two sets~\(\alpha=\{\alpha_{1}<\dots<\alpha_{k}\}\) and~\(\beta=\{\beta_{1}<\dots<\beta_{l}\}\)
of size~\(k\) and~\(l\), respectively.\footnote{One often takes partitions of the set~\(\{0,\dots,n-1\}\) instead,
so that there is no need for subtracting \(1\)'s in formula~\eqref{eq:def-ez}.}
Moreover, \((-1)^{(\alpha,\beta)}\) denotes the sign of the permutation sending \((1,\dots,n)\)
to~\((\alpha_{1},\dots,\alpha_{k},\beta_{1},\dots,\beta_{l})\).
For simplicity, we write the product simplex appearing in~\eqref{eq:def-ez} as
\begin{equation}
  \label{eq:def-s-alpha-1}
  (s_{\beta-1}x, s_{\alpha-1}y).
\end{equation}
The shuffle map is associative and moreover a morphism of dgcs, see \cite[(17.6)]{EilenbergMoore:1966}.

\begin{example}
  \label{ex:singular-simplex}
  The singular \(n\)-simplices~\(x\colon\Delta^{n}\to Z\) in a topological space~\(Z\)
  form the \(n\)-simplices of a simplicial set.
  Here \(\Delta^{n}=\{\,a\in\R^{n+1}\mid a_{0}+\dots+ a_{n} = 1\,\}\) is the
  standard \(n\)-simplex. The face and degeneracy operators are
  \begin{align}
    (d_{i}x)(a_{0},\dots,a_{n-1}) &= x(a_{0},\dots,a_{i-1},0,a_{i},\dots,a_{n}), \\
    (s_{i}x)(a_{0},\dots,a_{n+1}) &= x(a_{0},\dots,a_{i}+a_{i+1},\dots,a_{n+1}).
  \end{align}

  If \(x\) and~\(y\) are singular simplices,
  then the singular simplex~\((s_{\beta-1}x, s_{\alpha-1}y)\) on the right-hand side of~\eqref{eq:def-ez}
  maps~\(a\in\Delta^{n}\) to
  \begin{multline}
    \bigl(x(a_{0}+\dots+a_{\alpha_{1}-1},a_{\alpha_{1}}+\dots+a_{\alpha_{2}-1},\dots,a_{\alpha_{k}}+\dots+a_{n}), \\
    y(a_{0}+\dots+a_{\beta_{1}-1},a_{\beta_{1}}+\dots+a_{\beta_{2}-1},\dots,a_{\beta_{l}}+\dots+a_{n}) \bigr).
  \end{multline}
\end{example}

\subsection{Cubical sets}
\label{sec:cubical}

A \newterm{cubical set} is an \(\N\)-graded set~\(Y\) with face operators
\begin{equation}
  d^{\epsilon}_{i}\colon Y_{n}\to Y_{n-1} \qquad\text{for~\(\epsilon\in\{0,1\}\) and \(1\le i\le n\)}
\end{equation}
and degeneracy operators
\begin{equation}
  s_{i}\colon Y_{n}\to Y_{n+1} \qquad\text{for~\(1\le i\le n+1\)}
\end{equation}
satisfying the cubical identities
\begin{align}
  d^{\epsilon}_{i}\,d^{\epsilon'}_{j} &= d^{\epsilon'}_{j-1}\,d^{\epsilon}_{i} & & \text{if \(1\le i<j\le n\),\; \(\epsilon\),~\(\epsilon'\in\{0,1\}\),} \\
  \label{eq:cube-ss}
  s_{i}\,s_{j} &= s_{j+1}\,s_{i} & & \text{if \(1\le i\le j\le n+1\),} \\
  d^{\epsilon}_{i}\,s_{j} &= \begin{cases}
    s_{j-1}\,d^{\epsilon}_{i} & \text{if \(i<j\),} \\
    \id & \text{if \(i=j\),} \\
    s_{j}\,d^{\epsilon}_{i-1} & \text{if \(i>j\),}
  \end{cases} & & \text{where \(1\le i\le n+1\), \(1\le j\le n+1\).}
\end{align}
The elements of~\(Y_{n}\) are called \(n\)-cubes.
An \(n\)-cube~\(y\) is degenerate if it is of the form~\(y=s_{i}\,y'\) for some \((n-1)\)-cube~\(y'\) and some~\(1\le i\le n\).

A \newterm{cubical set with connections} additionally has connection operators
\begin{equation}
  \gamma_{i}\colon Y_{n}\to Y_{n+1} \qquad\text{for~\(1\le i\le n\) where \(n\ge 1\)}
\end{equation}
satisfying
\begin{align}
  \gamma_{i}\,\gamma_{j} &= \gamma_{j+1}\,\gamma_{i} \qquad\text{if \(i\le j\),} \\
  \shortintertext{where \(i\),~\(j\in\{1,\dots,n\}\),}
  d^{\epsilon}_{i}\,\gamma_{j} &= \begin{cases}
    \gamma_{j-1}\,d^{\epsilon}_{i} & \text{if \(i<j\),} \\
    s_{j}\,d^{\epsilon}_{j} & \text{if \(i\in\{j,j+1\}\) and \(\epsilon=0\),} \\
    \id & \text{if \(i\in\{j,j+1\}\) and \(\epsilon=1\),} \\
    \gamma_{j}\,d^{\epsilon}_{i-1} & \text{if \(i>j+1\),}
  \end{cases} \\
  \shortintertext{where \(1\le j\le n\), \(1\le i\le n+1\) and~\(\epsilon\in\{0,1\}\),}
  \gamma_{i}\,s_{j} &= \begin{cases}
    s_{j+1}\,\gamma_{i} & \text{if \(i<j\),} \\
    s_{i+1}\,s_{i} & \text{if \(i=j\),} \\
    s_{j}\,\gamma_{i-1} & \text{if \(i>j\).}
  \end{cases}
\end{align}
where \(i\),~\(j\in\{1,\dots,n+1\}\).\footnote{%
  The identity for~\(d^{1}_{j+1}\gamma_{j}\) is stated incorrectly in~\cite[Def.~2]{BarceloEtAl:2021}.}
We say that a cube is \newterm{folded} if it lies in the image of some connection map.

All cubical sets we are going to consider will have connections.

\begin{example}
  \label{ex:singular-cubes}
  The singular \(n\)-cubes~\(x\colon I^{n}\to Z\) in a topological space~\(Z\) form the \(n\)-cubes of a cubical set (with connections).
  Here \(I=[0,1]\) is the unit interval; \(I^{0}\) is a point. The face, degeneracy and connection operators are\footnote{%
    Unlike~\cite[Sec.~2]{RiveraZeinalian:2018},~\cite[Def.~5.11]{MinichielloRiveraZeinalian:2023} and~\cite[Sec.~2.4]{MMRivera:2024},
    we use the minimum (and not the maximum) of adjacent coordinates for the connection operators.
    In general, both choices are possible, \cf~\cite[Rem.~11]{BarceloEtAl:2021}.
    The reason for our choice is that the minimum naturally appears in a crucial identity for the Szczarba operators,
    see \Cref{thm:properties-sz-new-s}\,\ref{qq2}.}
  \begin{align}
    (d^{\epsilon}_{i}x)(b_{1},\dots,b_{n-1}) &= x(b_{1},\dots,b_{i-1},\epsilon,b_{i},\dots,b_{n}), \\
    (s_{i}x)(b_{1},\dots,b_{n+1}) &= x(b_{1},\dots,b_{i-1},b_{i+1},\dots,b_{n+1}), \\
    (\gamma_{i}x)(b_{1},\dots,b_{n+1}) &= x(b_{1},\dots,b_{i-1},\min(b_{i},b_{i+1}),b_{i+2},\dots,b_{n+1}).
  \end{align}
\end{example}

The product of two cubical sets~\(Y\) and~\(Z\) is the cubical set~\(Y\times Z\) with
\begin{equation}
  \label{eq:def-product-cubical}
  (Y\times Z)_{n} = \Bigl(\, \bigsqcup_{k+l=n} Y_{k}\times Z_{l} \Bigr) \! \Bigm/ \sim
\end{equation}
for~\(n\ge0\), where \((s_{k+1}y,z)\sim(y,s_{1}z)\) for any~\(y\in Y_{k}\) and~\(z\in Z_{l}\),
\cf~\cite[p.~209]{KadeishviliSaneblidze:2005}.\footnote{%
  The identification of the last degeneracy map of~\(Y\) with the first one of~\(Z\),
  necessary to obtain a cubical set, is missing in~\cite[p.~6]{MMRivera:2024}.}
We write the equivalence class of~\((y,z)\) in~\(Y\times Z\) as~\([y,z]\); the structure maps are applied
componentwise to~\(y\) or~\(z\) according to their dimensions.

By a \newterm{cubical group} we mean a group object in the category of cubical sets (with connections).
\newterm{Cubical monoids} are defined analogously.

The normalized cubical chain complex~\(C(Y)\) with coefficients in a commutative ring~\(\kk\)
is obtained by dividing out the subcomplexes spanned by the degenerate simplices~\(s_{i}y\)
and folded simplices~\(\gamma_{i}x\) from the non-normalized complex.
The differential of an \(n\)-cube~\(y\) is
\begin{equation}
  d\,y = \sum_{i=1}^{n}(-1)^{i}\,(d^{0}_{i}x-d^{1}_{i}y).
\end{equation}
(The sign agrees with~Massey~\cite[Def.~II.2.3]{Massey:1980}, 
but it is different from the convention used by Rivera \emph{et al.}
\cite[Sec.~2, p.~3794]{RiveraZeinalian:2018},~\cite[p.~36]{MinichielloRiveraZeinalian:2023},~\cite[Sec.~2.4]{MMRivera:2024}.)
The complex~\(C(Y)\) is a dgc with the Serre diagonal
\begin{equation}
  \Delta\,y = \!\!\!\sum_{\substack{k+l=n \\ (\alpha,\beta)\in\Shuff(k,l)}}\!\!\!
  (-1)^{(\alpha,\beta)}\,
  d^{0}_{\beta_{1}}\cdots d^{0}_{\beta_{l}}\,y \otimes d^{1}_{\alpha_{1}}\cdots d^{1}_{\alpha_{k}}\,y
\end{equation}
and the augmentation sending every \(0\)-cube to~\(1\in\kk\).
For any two cubical sets~\(Y\) and~\(Z\) one has \(C(Y\times Z)=C(Y)\otimes C(Z)\) as dgcs, naturally in~\(Y\) and~\(Z\).

\section{Simplicial cubes}
\label{sec:simplicial-cubes}

Recall (from~\cite[Def.~5.4]{May:1968}, for instance) that the simplicial \(n\)-simplex~\(\Simp^{n}\), \(n\ge0\), is the simplicial set
whose \(m\)-simplices are the weakly increasing maps~\(\{0,\dots,m\}\to\{0,\dots,n\}\).
In particular, an \(m\)-simplex in the simplicial interval~\(\II=\Simp^{1}\) can be written as weakly increasing sequence
of \(m+1\) integers in the set~\(\TWO=\{0,1\}\). Hence, such a simplex is determined by the number~\(k\in\{1,\dots m+1\}\) of integers sent to~\(0\),
and we write it as~\([k]_{m}\). In this notation,
\begin{equation}
  d_{j}[k]_{m} = \begin{cases}
    [k]_{m-1} & \text{if \(k\le j\)} \\
    [k-1]_{m-1} & \text{if \(k>j\)}
  \end{cases},
  \qquad
  s_{j}[k]_{m} = \begin{cases}
    [k]_{m+1} & \text{if \(k\le j\)} \\
    [k+1]_{m+1} & \text{if \(k>j\)}
  \end{cases}
\end{equation}
for~\(0\le j\le m\) and~\(0\le k\le m+1\) (assuming \(m>0\) in the first formula).

Given an \(m\)-simplex~\(x=(x_{1},\dots,x_{n})\) in the simplicial cube~\(\II^{n}\),
we can list the component simplices as the rows of a matrix. For example, the matrix
\begin{equation}
  \label{eq:simplex-matrix}
  \begin{bmatrix}
    0 & 1 & 1 & 1 & 1 \\
    0 & 0 & 0 & 0 & 1 \\
    0 & 0 & 1 & 1 & 1
  \end{bmatrix}
\end{equation}
describes the \(4\)-simplex~\(x=([1]_{4},[4]_{4},[2]_{4})\) in~\(\II^{3}\), which we abbreviate to~\([1,4,2]_{4}\).
We call \eqref{eq:simplex-matrix} the \newterm{matrix form} of~\(x\).
Note that its columns are the vertices of~\(x\).
They form a weakly increasing sequence in~\(\TWO^{n}\) with respect to the componentwise partial order.
Conversely, any such sequence in~\(\TWO^{n}\) of length~\(m+1\)
determines an \(m\)-simplex in~\(\II^{n}\).

It will often be more convenient to write an \(m\)-simplex~\(x=[k_{1},\dots,k_{n}]_{m}\)
as the ordered partition~\(u=(u_{0},\dots,u_{m+1})\) of~\(\{1,\dots,n\}\)
where \(u_{j}=\{\,i\mid k_{i} = j\,\}\).
In this notation, the simplex~\(x=[1,4,2]_{4}\) from above is written as~\((\emptyset,1,3,\emptyset,2)\).
(We omit braces around singletons.) We call this the \newterm{partition form} of~\(x\).

Using this partition form, we have
\begin{align}
  \label{eq:d-j-u}
  d_{j}u &= \bigl(u_{0},\dots,u_{j}\cup u_{j+1},\dots, u_{m+1}\bigr), \\
  \label{eq:s-j-u}
s_{j}u &= \bigl(u_{0},\dots,u_{j},\emptyset,u_{j+1},\dots,u_{m+1})
\end{align}
for~\(0\le j\le m\) (and again \(m>0\) in the first formula).

From the formula~\eqref{eq:s-j-u} we immediately get the following.

\begin{lemma}
  An \(m\)-simplex~\(u\) in~\(\II^{n}\) (in partition form) is degenerate if and only if \(u_{j}=\emptyset\) for some~\(1\le j\le m\).
\end{lemma}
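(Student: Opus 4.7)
The statement is an immediate consequence of the explicit formula~\eqref{eq:s-j-u} for the degeneracy operators in partition form, so no serious obstacle is foreseen; the plan is just to spell out the two directions of the equivalence cleanly.

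For the ``only if'' direction, I would start from an $m$-simplex $u$ of the form $u=s_j v$ with $v$ an $(m-1)$-simplex in $\II^n$ and $0\le j\le m-1$. Applying~\eqref{eq:s-j-u} directly shows that $u=(v_0,\dots,v_j,\emptyset,v_{j+1},\dots,v_m)$, so the empty slot of $u$ sits at index $j+1$. Since $j$ ranges over $\{0,\dots,m-1\}$, this index lies in $\{1,\dots,m\}$, which is exactly what is needed.

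For the ``if'' direction, suppose $u_j=\emptyset$ for some $1\le j\le m$, and define the $(m-1)$-simplex
\begin{equation*}
  v = (u_0,\dots,u_{j-1},u_{j+1},\dots,u_{m+1})
\end{equation*}
by deleting the empty slot. Setting $j'=j-1$, so that $0\le j'\le m-1$, formula~\eqref{eq:s-j-u} gives
\begin{equation*}
  s_{j'}v = (u_0,\dots,u_{j-1},\emptyset,u_{j+1},\dots,u_{m+1}) = u,
\end{equation*}
using $u_j=\emptyset$ at the last step. Hence $u$ is degenerate, completing the proof. Both implications are one-line consequences of~\eqref{eq:s-j-u}, and the mild bookkeeping of indices (the restriction $1\le j\le m$ reflecting the allowed range $0\le j\le m-1$ of degeneracy operators $s_j\colon X_{m-1}\to X_m$) is the only thing to check.
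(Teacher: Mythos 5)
Your proof is correct and takes exactly the approach the paper intends: the paper simply remarks that the lemma ``immediately'' follows from formula~\eqref{eq:s-j-u}, and you have spelled out the two directions of that deduction, with the index bookkeeping done right (in particular, the restriction $1\le j\le m$ corresponds precisely to the allowed range $0\le j'\le m-1$ of $s_{j'}$, which is why $u_0$ and $u_{m+1}$ are excluded).
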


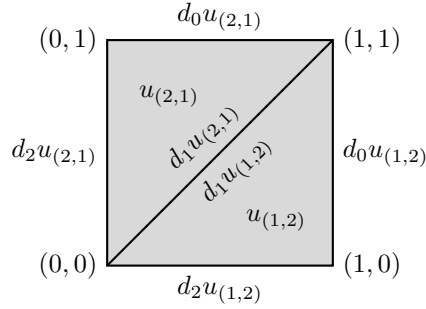
\begin{figure}
  \begin{tikzpicture}[scale=3]
    \filldraw[fill=black!15, thick] (0,0) node [below,left] {\((0,0)\)}
    -- node [below] {\(d_{2}u_{(1,2)}\)} (1,0) node [below,right] {\((1,0)\)}
    -- node [right] {\(d_{0}u_{(1,2)}\)} (1,1) node [above,right] {\((1,1)\)}
    -- node [above] {\(d_{0}u_{(2,1)}\)} (0,1) node [above,left] {\((0,1)\)}
    -- node [left] {\(d_{2}u_{(2,1)}\)} cycle;
    \draw [thick] (0,0) -- node [below,rotate=45] {\(d_{1}u_{(1,2)}\)} node [above,rotate=45] {\(d_{1}u_{(2,1)}\)}  (1,1);
    \draw (0.75,0.2) node {\(u_{(1,2)}\)};
    \draw (0.275,0.75) node {\(u_{(2,1)}\)};
  \end{tikzpicture}
  \caption{The simplicial square~\(\II^{2}\). Written in partition form,
  we have \(u_{(1,2)}=(\emptyset,1,2,\emptyset)\) and \(u_{(2,1)}=(\emptyset,2,1,\emptyset)\).}
  \label{fig:square}
\end{figure}

To a permutation~\(\pi\in S_{n}\) we associate the non-degenerate \(n\)-simplex
\begin{equation}
  u_{\pi} = \bigl(\emptyset,\pi(1),\dots,\pi(n),\emptyset\bigr) \in \II^{n}
\end{equation}
(again in partition form). For~\(n=2\) these simplices are shown in \Cref{fig:square}.

An \(m\)-face~\(v=(v_{0},\dots,v_{m+1})\) of~\(u_{\pi}\)
is determined by an increasing sequence \(0\le j_{1}<\dots<j_{n-m}\le n\) via
\begin{align}
  \label{eq:v-j-pi}
  v &= \partial_{j_{1}}\cdots\partial_{j_{n-m}} u_{\pi} \\
  \notag &= \Bigl(\bigl\{\pi(1),\dots,\pi(i_{0})\bigr\},\bigl\{\pi(i_{0}+1),\dots,\pi(i_{1})\bigr\},\dots \\
  \notag &\qquad\qquad\qquad \bigl\{\pi(i_{m-1}+1),\dots,\pi(i_{m})\bigl\},\bigl\{\pi(i_{m}+1),\dots,\pi(n)\bigl\}\Bigr),
\end{align}
where
\begin{equation}
  \{i_{0}<i_{1}<\dots<i_{m-1}<i_{m}\}=\{0,\dots,n\}\setminus\{j_{1},\dots,j_{n-m}\}.
\end{equation}
In other words, if we think of the parts of~\(u_{\pi}\) as separated by vertical bars,
\begin{equation}
  \label{eq:pi-bars}
  \emptyset \stackrel{0}{\mid}
  \pi(1) \stackrel{1}{\mid} \pi(2) \stackrel{2}{\mid} \pi(3) \stackrel{3}{\mid}
  \cdots \stackrel{\!\!n-2\!\!}{\mid} \pi(n-1) \stackrel{\!\!n-1\!\!}{\mid} \pi(n)
  \stackrel{n}{\mid} \emptyset,
\end{equation}
and we remove the bars labelled~\(j_{1}\),~\ldots,~\(j_{n-m}\),
then the result is the ordered partition defining \(v\).
This implies in particular that the \(j_{i}\)'s depend only on the sizes~\(n_{j}=\# v_{j}\) of the parts of the partition~\(v\).

\begin{lemma} \( \)
  \label{thm:cube-generated-embedding}
  \begin{enumroman}
  \item \label{thm:cube-generated-embedding-1}
    The simplicial cube~\(\II^{n}\) is generated by the simplices~\(u_{\pi}\), \(\pi\in S_{n}\).
    In other words, any simplex~\(u\in\II^{n}\) is a degeneration of a face of some~\(u_{\pi}\).
  \item \label{thm:cube-generated-embedding-2}
    Any~\(\pi\in S_{n}\) defines an embedding~\(\iota_{\pi}\colon\Simp^{n}\to\II^{n}\)
    that sends the unique non-degenerate \(n\)-simplex in~\(\Simp^{n}\) to~\(u_{\pi}\).
  \end{enumroman}
\end{lemma}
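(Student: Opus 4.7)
The plan is to handle the two parts separately. For part~(i), I would first invoke the preceding lemma to reduce to the case where $u = (u_{0}, u_{1}, \ldots, u_{m}, u_{m+1})$ is non-degenerate; equivalently, each interior part~$u_{j}$ with $1 \le j \le m$ is non-empty. Setting $n_{j} = |u_{j}|$, I would choose an arbitrary ordering of the elements within each $u_{j}$ and concatenate these in the order $j = 0, 1, \ldots, m+1$. Since the $u_{j}$ partition $\{1, \ldots, n\}$, this produces a permutation $\pi \in S_{n}$. Finally, I would check that keeping only the bars in~\eqref{eq:pi-bars} at positions $n_{0},\, n_{0} + n_{1},\, \ldots,\, n_{0} + \cdots + n_{m}$ and applying formula~\eqref{eq:v-j-pi} yields exactly~$u$, exhibiting it as a face of~$u_{\pi}$.

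For part~(ii), I would invoke the Yoneda lemma to define $\iota_{\pi} \colon \Simp^{n} \to \II^{n}$ as the unique simplicial map whose value on the unique non-degenerate $n$-simplex of~$\Simp^{n}$ (the identity map $\{0, \ldots, n\} \to \{0, \ldots, n\}$) is~$u_{\pi}$. To prove that $\iota_{\pi}$ is injective on simplices, I would examine the vertices of~$u_{\pi}$: reading its matrix form shows that its $n+1$ vertices are the points $v_{0}, v_{1}, \ldots, v_{n} \in \TWO^{n}$, where $v_{j}$ has $1$'s exactly in the coordinates $\pi(1), \ldots, \pi(j)$. Since $v_{j}$ has exactly~$j$ entries equal to~$1$, these vertices are pairwise distinct. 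A general $m$-simplex of~$\Simp^{n}$, viewed as a weakly increasing map $f \colon \{0, \ldots, m\} \to \{0, \ldots, n\}$, is sent by~$\iota_{\pi}$ to the $m$-simplex of~$\II^{n}$ whose $j$-th vertex is $v_{f(j)}$. Distinctness of the $v_{j}$ then forces any equality $\iota_{\pi}(f) = \iota_{\pi}(f')$ to imply $f = f'$.

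The main obstacle is essentially bookkeeping: verifying in part~(i) that the permutation obtained from ordering the non-empty parts really produces~$u$ under the prescribed face operations, which amounts to matching the bar-removal formula~\eqref{eq:v-j-pi} against the chosen concatenation order. Beyond this, both parts reduce to standard facts about representable simplicial sets together with the observation that an $m$-simplex of~$\II^{n}$ is determined by its ordered sequence of vertices in~$\TWO^{n}$.
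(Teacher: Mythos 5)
Your proof is correct, and the overall structure is sound. For part~(i), your argument is essentially the paper's: order the elements of each block of the partition, concatenate to get $\pi$, and verify via the bar-removal description~\eqref{eq:v-j-pi} (the paper works directly from~\eqref{eq:d-j-u} and~\eqref{eq:s-j-u} without reducing to the non-degenerate case first, but this is the same bookkeeping).

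For part~(ii), your approach is genuinely different from the paper's and arguably cleaner. The paper argues in partition form: given an $m$-simplex $v$ in the image of $\iota_\pi$, the sizes of the non-empty parts uniquely pin down the face of $u_\pi$ involved, while the empty interior parts pin down the degeneracy operators; hence the preimage under $\iota_\pi$ is unique. You instead use the observation (stated earlier in the paper, after~\eqref{eq:simplex-matrix}) that an $m$-simplex of $\II^n$ is determined by its weakly increasing sequence of vertices in $\TWO^n$, note that the $n+1$ vertices $v_0,\dots,v_n$ of $u_\pi$ are pairwise distinct (distinguished by the number of $1$'s), and conclude injectivity because $\iota_\pi(f)$ has vertex sequence $(v_{f(0)},\dots,v_{f(m)})$. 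Your route avoids the slightly delicate ``sizes determine the face, emptiness determines the degeneracies'' argument and replaces it with a one-line distinctness check; the paper's route has the advantage of staying entirely within the partition-form formalism used in the rest of that section. Both are complete.
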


\begin{proof}
  Given any simplex~\(u\in\II^{n}\) (in partition form),
  choose an order of the elements of each set~\(u_{j}\), \(0\le j\le m+1\), and let \(\pi\) be the permutation
  given by the concatenation of these ordered sets. For example, if \(u=(\{1,3\},\emptyset,\{4\},\{2\})\),
  then we can choose \(\pi=(1,3,4,2)\) or~\(\pi=(3,1,4,2)\).
  It follows from the formulas~\eqref{eq:d-j-u} and~\eqref{eq:s-j-u} that \(u\) is a degeneration of a face of~\(u_{\pi}\).
  This proves the first claim.

  If an \(m\)-simplex~\(v\) lies in the image of a map~\(\iota_{\pi}\),
  then each of the sets~\(v_{0}\),~\dots,~\(v_{m+1}\) can be ordered in such a way that their concatenation gives \(\pi\).
  Now the sizes of the non-empty parts~\(v_{j}\) uniquely determine the face of~\(u_{\pi}\) of which \(v\) is a degeneration, and
  the other~\(v_{j}\) uniquely determine the degeneration operators. Hence \(\Simp^{n}\) embeds into~\(\II^{n}\) under~\(\iota_{\pi}\).
\end{proof}

We say that two distinct \(n\)-simplices in~\(\II^{n}\) are \newterm{adjacent} if they share a common facet.

\begin{lemma}
  \label{thm:simplices-adjacent}
  Let \(\pi\),~\(\pi'\in S_{n}\). Then the simplices~\(u_{\pi}\) and~\(u_{\pi'}\) are adjacent
  if and only if \(\pi'=\pi\tau\) for some adjacent transposition~\(\tau=(j,j+1)\), \(0<j<n\).
  In this case the \(j\)-th facets of~\(u_{\pi}\) and~\(u_{\pi'}\) agree: \(d_{j}u_{\pi}=d_{j}u_{\pi'}\).
\end{lemma}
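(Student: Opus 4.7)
The plan is to read off all facets of $u_{\pi}$ in partition form using formula~\eqref{eq:d-j-u} and to determine when two such facets, coming from possibly different permutations, can coincide.

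First I would apply~\eqref{eq:d-j-u} to $u_{\pi}=(\emptyset,\pi(1),\dots,\pi(n),\emptyset)$ and observe that this produces three qualitatively different kinds of facets. The ``left'' facet $d_{0}u_{\pi}=(\{\pi(1)\},\{\pi(2)\},\dots,\{\pi(n)\},\emptyset)$ has a nonempty leading part; dually, the ``right'' facet $d_{n}u_{\pi}$ has a nonempty trailing part; and for every $0<j<n$ the ``interior'' facet $d_{j}u_{\pi}$ has empty leading and trailing parts together with a unique two-element part $\{\pi(j),\pi(j+1)\}$ in position~$j$.

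Next I would argue that if $u_{\pi}$ and $u_{\pi'}$ share a facet, then the facet must fall into the same one of these three classes for both, since the classes are distinguished by the pattern of empty and nonempty parts at the ends, an invariant of the underlying $(n-1)$-simplex. In the ``left'' class, the singleton entries $\{\pi(1)\},\dots,\{\pi(n)\}$ along the facet determine $\pi$ entirely, forcing $\pi=\pi'$; the ``right'' class is symmetric. Hence these boundary facets never witness adjacency between distinct permutations.

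The heart of the argument is the interior case. If $d_{j}u_{\pi}=d_{j'}u_{\pi'}$ with $0<j<n$, then the position of the unique two-element part forces $j'=j$. Comparing the remaining singleton parts then yields $\pi(i)=\pi'(i)$ for $i\notin\{j,j+1\}$ together with $\{\pi(j),\pi(j+1)\}=\{\pi'(j),\pi'(j+1)\}$. Assuming $\pi\neq\pi'$, the two values must therefore be swapped, which is precisely $\pi'=\pi\circ(j,j+1)$. Conversely, the same formula makes it clear that whenever $\pi'=\pi\circ(j,j+1)$ with $0<j<n$, the $j$th facets of $u_{\pi}$ and~$u_{\pi'}$ coincide, so the pair is adjacent and the shared facet is the $j$th one, as asserted. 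I do not foresee a real obstacle here; the only point requiring care is the bookkeeping between positions $0,\dots,n$ in the partition form of a facet and the input indices $1,\dots,n$ of~$\pi$.
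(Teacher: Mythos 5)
Your proposal is correct and fills in the details of exactly the argument the paper has in mind: the paper's proof consists of the single sentence ``This follows directly from the formula~\eqref{eq:d-j-u},'' and your case analysis of the boundary and interior facets of~\(u_{\pi}\) in partition form is precisely the computation that sentence alludes to.
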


\begin{proof}
  This follows directly from the formula~\eqref{eq:d-j-u}.
\end{proof}

\begin{lemma}
  \label{thm:cube-hereditary}
  Let \(\pi\ne\pi'\in S_{n}\), and let the simplex~\(v=u_{\pi}\cap u_{\pi'}\) be the intersection of~\(u_{\pi}\) and~\(u_{\pi'}\).
  Then there is a sequence \(\pi_{0}\),~\ldots,~\(\pi_{l}\) in~\(S_{n}\)  of length~\(l\ge 1\) such that
  \begin{enumarabic}
  \item \(\pi_{0}=\pi\) and \(\pi_{l}=\pi'\),
  \item \(u_{\pi_{j-1}}\) and~\(u_{\pi_{j}}\) are adjacent for any~\(1\le j\le l\),
  \item \(v\) is a face of~\(u_{\pi_{j}}\) for any~\(0\le j\le l\).
  \end{enumarabic}
\end{lemma}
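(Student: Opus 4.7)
The plan is to exploit the partition form of simplices in~$\II^n$. I would start by writing $v$ in partition form as $v = (v_0, v_1, \ldots, v_{m+1})$ and setting $i_j = |v_0| + \cdots + |v_j|$ for $0 \le j \le m$, together with the conventions $i_{-1} = 0$ and $i_{m+1} = n$. Since $v$ is a face of both~$u_\pi$ and~$u_{\pi'}$, formula~\eqref{eq:v-j-pi} forces both permutations to map each block of positions~$B_j = \{i_{j-1}+1, \ldots, i_j\}$ bijectively onto~$v_j$. Thus $\pi$ and~$\pi'$ share the same assignment of blocks of positions to parts of~$v$, and can only differ by the internal ordering of each~$v_j$.

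The next step is to interpolate between~$\pi$ and~$\pi'$ using adjacent transpositions that stay inside a single block. For each~$j$, the two orderings of~$v_j$ induced by~$\pi$ and~$\pi'$ are connected by a sequence of adjacent transpositions internal to~$B_j$, for instance via bubble sort. Concatenating these, one block at a time, yields a sequence $\pi = \pi_0, \pi_1, \ldots, \pi_l = \pi'$ in~$S_n$ with $l \ge 1$ (because $\pi \ne \pi'$) such that each $\pi_s = \pi_{s-1}\cdot(k_s, k_s+1)$ with $i_{j-1} < k_s < i_j$ for the relevant~$j$. In particular $0 < k_s < n$, so \Cref{thm:simplices-adjacent} supplies property~(2), while property~(1) holds by construction.

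For property~(3) I would argue that since each transposition $(k_s, k_s+1)$ lies strictly inside some~$B_j$, it only permutes positions within a single block. Hence the identity $\pi_s(B_{j'}) = v_{j'}$ continues to hold for all~$s$ and all~$j'$, and~\eqref{eq:v-j-pi} applied to each~$\pi_s$ then shows that $v$ is a face of~$u_{\pi_s}$. The only mildly delicate point is the bookkeeping needed to verify that the transpositions really do stay within their respective blocks; once the block decomposition is identified, the remainder is essentially a routine application of the fact that the symmetric group on each~$B_j$ is generated by adjacent transpositions.
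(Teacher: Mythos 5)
Your proof is correct and follows essentially the same idea as the paper's: read off from $v$ the common block structure $\{B_j\}$ that both $\pi$ and $\pi'$ must respect (via the observation after~\eqref{eq:pi-bars} that the bar-indices depend only on the part sizes $n_j=\#v_j$), then interpolate inside each block by adjacent transpositions, exactly as the paper does via the factorization $\pi'=\pi\circ(\tau_1\sqcup\dots\sqcup\tau_m)$. The one genuine difference is that the paper first uses the maximality of $v=u_\pi\cap u_{\pi'}$ to argue $v_0=v_{m+1}=\emptyset$, which lets it restrict attention to the inner blocks; you skip that step entirely and simply allow the boundary blocks $B_0$ and $B_{m+1}$ to participate in the bubble sort. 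That is legitimate — every transposition inside any block, including the extreme ones, is an adjacent transposition $(k,k+1)$ with $0<k<n$, so \Cref{thm:simplices-adjacent} still applies — and it actually proves the slightly stronger statement that the conclusion holds for any common face $v$ of $u_\pi$ and $u_{\pi'}$, not just the largest one. So your argument is a mild streamlining of the paper's and is sound; nothing is missing.
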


In the language of Billera--Rose~\cite[p.~486]{BilleraRose:1992} this says
that the simplicial complex~\(\II^{n}\) is \newterm{hereditary}.

\begin{proof}
  Let \(m\) be the dimension of~\(v\).
  If \(w=(w_{0},\dots,w_{m+1})\) is a face of~\(u_{\pi}\) (in partition form) with \(w_{0}\ne\emptyset\), then
  \begin{equation}
    \tilde w = (\emptyset,w_{0},\dots,w_{m+1})
  \end{equation}
  is a face of~\(u_{\pi}\) having \(w\) as a facet. The same holds for~\(u_{\pi'}\).
  Since \(v\) is the largest common face of~\(u_{\pi}\) and~\(u_{\pi'}\),
  we must therefore have \(v_{0}=\emptyset\) and analogously \(v_{m+1}=\emptyset\).

  Note that if we write
  \begin{equation}
    v = \partial_{j_{1}}\cdots\partial_{j_{n-m}} u_{\pi}
  \end{equation}
  with~\(j_{1}<\dots<j_{n-m}\), then it follows from the discussion around~\eqref{eq:pi-bars} that we also have
  \begin{equation}
    v = \partial_{j_{1}}\cdots\partial_{j_{n-m}} u_{\pi'}
  \end{equation}
  for the same~\(j_{i}\)'s. Thus, in the notation from~\eqref{eq:v-j-pi} we must have \(i_{0}=0\), \(i_{m}=n\) and
  \begin{equation}
    \begin{split}
      \bigl\{\pi(1),\dots,\pi(i_{1})\bigr\} &= \bigl\{\pi'(1),\dots,\pi'(i_{1})\bigr\}, \\
      \bigl\{\pi(i_{1}+1),\dots,\pi(i_{2})\bigr\} &= \bigl\{\pi'(i_{1}+1),\dots,\pi'(i_{2})\bigr\}, \\
      &{\phantom{x}\vdots} \\
      \bigl\{\pi(i_{m-1}+1),\dots,\pi(n)\bigr\} &= \bigl\{\pi'(i_{m-1}+1),\dots,\pi'(n)\bigr\}.
    \end{split}
  \end{equation}

  This implies
  \begin{equation}
    \label{eq:pi-tau}
    \pi' = \pi\circ(\tau_{1}\sqcup\dots\sqcup\tau_{m})
  \end{equation}
  for some permutations~\(\tau_{j}\in S_{n_{j}}\) where \(n_{j}=\# v_{j}\) for~\(1\le j\le m\).
  Note that for any choice of permutations~\(\tau_{j}\in S_{n_{j}}\) the right-hand side of~\eqref{eq:pi-tau}
  defines a permutation~\(\tilde\pi\in S_{n}\) whose associated simplex~\(u_{\tilde\pi}\) has \(v\) as a face.
  Since each~\(\tau_{j}\) is a product of adjacent transpositions, the claim follows.
\end{proof}

\begin{example}
  We illustrate the formula~\eqref{eq:pi-tau} from the preceding proof with
  \begin{equation}
    \pi = \begin{pmatrix}
      1 & 2 & 3 & 4 & 5 & 6 \\
      5 & 4 & 2 & 1 & 3 & 6
    \end{pmatrix}
    \quad\text{and}\quad
    \pi' = \begin{pmatrix}
      1 & 2 & 3 & 4 & 5 & 6 \\
      2 & 4 & 5 & 1 & 6 & 3
    \end{pmatrix} \in S_{6}.
  \end{equation}
  In this case the intersection of~\(u_{\pi}\) and~\(u_{\pi'}\) is the \(3\)-simplex (in partition form)
  \begin{equation}
    v = (\emptyset,\{2,4,5\},\{1\},\{3,6\},\emptyset),
  \end{equation}
  and we have
  \begin{equation}
    \tau_{1} = \begin{pmatrix}
      1 & 2 & 3 \\ 3 & 2 & 1
    \end{pmatrix},
    \qquad
    \tau_{2} = \begin{pmatrix}
      1 \\ 1
    \end{pmatrix},
    \qquad
    \tau_{3} = \begin{pmatrix}
      1 & 2 \\ 2 & 1
    \end{pmatrix}.
  \end{equation}
\end{example}

\begin{corollary}
  \label{thm:triang-map}
  Let \(X\) be a simplicial set and \(n\ge0\).
  Given a family~\((x_{\pi})_{\pi\in S_{n}}\) of \(n\)-simplices in~\(X\),
  the assignment \(u_{\pi} \mapsto f(u_{\pi}) = x_{\pi}\) for all~\(\pi\in S_{n}\) extends
  to a simplicial map~\(f\colon \II^{n}\to X\) if and only if
  \begin{equation*}
    d_{j}\,x_{\pi} = d_{j}\,x_{\pi\tau}
  \end{equation*}
  for any~\(\pi\in S_{n}\) and any~\(0<j<n\), where \(\tau\) is the adjacent transposition~\((j,j+1)\).
  Moreover, such an extension is unique.
\end{corollary}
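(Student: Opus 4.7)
Necessity and uniqueness come directly from the structural lemmas already proved. If $\tau = (j,j+1)$, then \Cref{thm:simplices-adjacent} gives $d_j u_\pi = d_j u_{\pi\tau}$, so any simplicial extension is forced to identify $d_j x_\pi$ with $d_j x_{\pi\tau}$. Uniqueness follows from \Cref{thm:cube-generated-embedding}\,\ref{thm:cube-generated-embedding-1}: every simplex of $\II^n$ is a degeneration of a face of some $u_\pi$, so a simplicial map is determined by its values on the $u_\pi$.

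For existence, my plan is to glue together simplicial maps defined on the images of the embeddings $\iota_\pi\colon \Simp^n\hookrightarrow \II^n$ supplied by \Cref{thm:cube-generated-embedding}\,\ref{thm:cube-generated-embedding-2}. For each $\pi\in S_n$ I let $f_\pi\colon \iota_\pi(\Simp^n)\to X$ be the unique simplicial map sending $u_\pi$ to $x_\pi$; this exists because simplicial maps out of a copy of~$\Simp^n$ are classified by the image of the top simplex. Since the $\iota_\pi(\Simp^n)$ cover $\II^n$ by \Cref{thm:cube-generated-embedding}\,\ref{thm:cube-generated-embedding-1}, these $f_\pi$ will assemble into the required map~$f$ provided they agree on pairwise intersections.

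The crux, and the step I expect to be most delicate, is this compatibility. For $\pi\ne\pi'$ I first identify the intersection $\iota_\pi(\Simp^n)\cap\iota_{\pi'}(\Simp^n)$ with the simplicial subset generated by $v := u_\pi\cap u_{\pi'}$: since both $\iota_\pi$ and $\iota_{\pi'}$ are embeddings, a simplex lies in $\iota_\pi(\Simp^n)$ precisely when its unique non-degenerate representative is a face of~$u_\pi$. To show $f_\pi$ and~$f_{\pi'}$ agree on~$v$ I invoke \Cref{thm:cube-hereditary}, obtaining a chain $\pi=\pi_0,\pi_1,\ldots,\pi_l=\pi'$ with $\pi_i=\pi_{i-1}\tau_i$ for an adjacent transposition $\tau_i=(j_i,j_i+1)$ and $v$ a face of every~$u_{\pi_i}$. \Cref{thm:simplices-adjacent} gives $d_{j_i} u_{\pi_{i-1}} = d_{j_i} u_{\pi_i}$, the hypothesis gives $d_{j_i} x_{\pi_{i-1}} = d_{j_i} x_{\pi_i}$, and since $v$ is a face of this common facet, $f_{\pi_{i-1}}(v)=f_{\pi_i}(v)$. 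Chaining these equalities yields $f_\pi(v) = f_{\pi'}(v)$, and agreement on the whole subcomplex generated by~$v$ then follows because each restriction is a simplicial map determined by its value on~$v$. Once the $f_\pi$ glue in this way, simpliciality of the resulting $f$ is inherited from each piece, completing the construction.
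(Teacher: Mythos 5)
Your proof is correct and takes essentially the same route as the paper's: glue the maps $f_\pi$ defined on the embedded copies of $\Simp^n$, verify compatibility on adjacent pairs via \Cref{thm:simplices-adjacent}, and reduce the general case to the adjacent one via the hereditary property of \Cref{thm:cube-hereditary}. The only difference is that you make explicit the identification of $\iota_\pi(\Simp^n)\cap\iota_{\pi'}(\Simp^n)$ with the subcomplex generated by $u_\pi\cap u_{\pi'}$, which the paper leaves implicit.
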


\begin{proof}
  Uniqueness follows from the fact that the simplices~\(u_{\pi}\) generate \(\II^{n}\)
  as a simplicial set, see \Cref{thm:cube-generated-embedding}\,\ref{thm:cube-generated-embedding-2}.

  By part~\ref{thm:cube-generated-embedding-1} of the same result,
  each \(u_{\pi}\) generates a simplicial subset of~\(\II^{n}\) isomorphic to the \(n\)-simplex~\(\Simp^{n}\).
  Hence the assignment~\(u_{\pi}\mapsto x_{\pi}\) uniquely extends to each such embedded \(n\)-simplex.
  Call this extension~\(f_{\pi}\). We have to show that these extensions are compatible with each other
  if and only if the stated condition holds.

  Consider two simplices~\(u_{\pi}\) and~\(u_{\pi'}\) that share a common facet~\(v\).
  By \Cref{thm:simplices-adjacent}, we have \(\pi'=\pi\,\tau\) for some adjacent transposition~\(\tau=(j,j+1)\)
  and also \(v=d_{j}\,u_{\pi}=d_{j}\,u_{\pi'}\). Hence the two maps~\(f_{\pi}\) and~\(f_{\pi'}\) agree on their common facet
  if and only if \(d_{j}\,x_{\pi} = d_{j}\,x_{\pi'}\). This proves the necessity of the condition.

  For sufficiency, consider arbitrary~\(\pi\),~\(\pi'\in S_{n}\) and set \(v=u_{\pi}\cap u_{\pi'}\).
  According to \Cref{thm:cube-hereditary}, we can connect \(u_{\pi}\) and~\(u_{\pi'}\) by a sequence of simplices
  \begin{equation}
    u_{\pi}=u_{\pi_{0}}, u_{\pi_{1}}, \dots, u_{\pi_{l}} = u_{\pi'}
  \end{equation}
  such that for any~\(1\le j\le l\), the simplices~\(u_{\pi_{j}}\) and~\(u_{\pi_{j+1}}\) share a common facet containing \(v\).
  Then our assumption implies that all~\(f_{\pi_{j}}\) agree on~\(v\), in particular \(f_{\pi}\) and~\(f_{\pi'}\) do so.
  Hence we get a well-defined simplicial map from~\(\II^{n}\) to~\(X\).
\end{proof}

We finally look more closely at simplices on products of simplicial cubes.
The notation~``\(s_{\beta-1}u_{\sigma}\)'' is as in~\eqref{eq:def-s-alpha-1}.

\begin{proposition}
  \label{thm:decomposition-simplex-product-cube}
  Let \(k\),~\(l\ge0\). Assume that
  the permutation~\(\pi\in S_{k+l}\) corresponds to the \((k,l)\)-shuffle~\((\alpha,\beta)\)
  and the permutations~\(\sigma\in S_{k}\),~\(\tau\in S_{l}\)
  under the bijection~\(\Psi_{k,l}\) from \Cref{thm:Psi-k-l}.
  The \((k+l)\)-simplex~\(u_{\pi}\in\II^{k+l}\) then corresponds to the pair~\((v,w)\)
  under the decomposition~\(\II^{k+l}=\II^{k}\times\II^{l}\), where
  \begin{equation*}
    v = s_{\beta-1}\,u_{\sigma} \in \II^{k}
    \qquad\text{and}\qquad
    w = s_{\alpha-1}\,u_{\tau} \in \II^{l}.
  \end{equation*}
\end{proposition}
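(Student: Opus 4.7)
My plan is to match up the simplex $u_\pi$ with the pair $(v, w)$ row by row using the matrix form of simplices in $\II^n$. Writing $u_\pi = (\emptyset, \pi(1), \dots, \pi(k+l), \emptyset)$ in partition form, its $i$-th matrix row is the one-dimensional component $[\pi^{-1}(i)]_{k+l}$. Inverting $\pi = (\sigma \sqcup \tau) \circ \pi_{\alpha,\beta}$ via $\pi_{\alpha,\beta}(\alpha_s) = s$ and $\pi_{\alpha,\beta}(\beta_t) = k+t$, I read off
\begin{equation*}
  \pi^{-1}(i) = \alpha_{\sigma^{-1}(i)} \quad\text{for } 1 \le i \le k,
  \qquad
  \pi^{-1}(k+j) = \beta_{\tau^{-1}(j)} \quad\text{for } 1 \le j \le l.
\end{equation*}

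The crux of the argument is the combinatorial identity
\begin{equation*}
  s_{\beta_l - 1} \cdots s_{\beta_1 - 1}\, [s]_k = [\alpha_s]_{k+l}
  \qquad (1 \le s \le k),
\end{equation*}
together with its counterpart $s_{\alpha_k-1} \cdots s_{\alpha_1-1}\, [t]_l = [\beta_t]_{k+l}$ obtained by swapping the roles of $\alpha$ and $\beta$. To prove it, I would recall that the iterated degeneracy acts on $[s]_k$ by pre-composition with a weakly increasing surjection $\phi\colon\{0,\dots,k+l\}\to\{0,\dots,k\}$. A direct inductive bookkeeping on the insertion points---using that each $s_{\beta_t - 1}$ inserts a duplicate at position $\beta_t$ of the current simplex and that $\beta_1<\dots<\beta_l$ prevents any shifting of earlier duplicates---identifies the duplicate positions of $\phi$ as exactly $\{\beta_1,\dots,\beta_l\}$, equivalently $\phi(p) = \#\{s : \alpha_s \le p\}$. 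Since $[s]_k$ takes value $0$ precisely on $\{0,\dots,s-1\}$, the identity $[s]_k \circ \phi = [\alpha_s]_{k+l}$ follows at once.

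With this lemma in hand, the conclusion is mechanical: the $i$-th row of $u_\sigma$ is $[\sigma^{-1}(i)]_k$, so applying $s_{\beta-1}$ component-wise yields that the $i$-th row of $v = s_{\beta-1}\,u_\sigma$ is $[\alpha_{\sigma^{-1}(i)}]_{k+l}$, matching the first $k$ rows of $u_\pi$. The symmetric computation matches the $l$ rows of $w = s_{\alpha-1}\,u_\tau$ with the last $l$ rows of $u_\pi$, so $u_\pi = (v,w)$ under the decomposition $\II^{k+l} = \II^k \times \II^l$. The only step where care is required is the combinatorial identification of $\phi$; everything else is a direct unpacking of definitions.
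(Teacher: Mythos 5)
Your proof is correct, and it takes a genuinely different (though equally elementary) route from the paper's. The paper works in \emph{partition form} and verifies the claim ``by columns'': it observes that the projection $\II^{k+l}\to\II^{k}$ simply deletes all elements $>k$ from each part $u_j$ of the partition $u_\pi$, and then reads off directly from the formula $s_ju=(u_0,\dots,u_j,\emptyset,u_{j+1},\dots,u_{m+1})$ that the positions emptied out are exactly those recorded by $\beta$, while the surviving entries trace out $u_\sigma$; the whole argument is two sentences. You instead work ``by rows'' in matrix form: you compute $\pi^{-1}(i)=\alpha_{\sigma^{-1}(i)}$ explicitly from the bijection $\Psi_{k,l}$, and reduce everything to the one-dimensional combinatorial identity $s_{\beta_l-1}\cdots s_{\beta_1-1}[s]_k=[\alpha_s]_{k+l}$, which you establish by tracking how each $s_{\beta_t-1}$ shifts the value. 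Both approaches are just unwindings of the same definitions; yours isolates a reusable one-dimensional lemma at the cost of a bit more bookkeeping, while the paper's partition-form projection argument is shorter because it avoids ever having to invert $\pi$. One remark: your ``direct inductive bookkeeping'' sentence asserts the lemma rather than fully proving it, but the claim is correct (the increasing order $\beta_1<\dots<\beta_l$ is exactly what makes the insertion points stable), and the spot-checkable formula $\alpha_s = s + \#\{t : \beta_t < \alpha_s\}$ you implicitly use is right.
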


\begin{proof}
  The simplex~\(v\) is the image of~\(u\) under the projection~\(\II^{k+l}\to\II^{k}\).
  In the partition form we have
  \(v_{j}=\{i \mid i\in u_{j}, i\le k\}\) for any~\(0\le j\le k+l+1\).
  In other words, all occurrences of numbers~\(>k\) are taken out. Since \(\beta\) records the positions
  of these elements and \(\sigma\) is the sequence of the elements in~\(\pi\) that are~\(\le k\),
  we conclude from~\eqref{eq:s-j-u} that \(v = s_{\beta-1}\,u_{\sigma}\) holds.
  The second identity is analogous; this time we have
  \(w_{j}=\{i-k \mid i\in u_{j}, i\ge k+1\}\).
\end{proof}

\section{The cube category}

Recall that we write \(\TWO=\{0,1\}\).
The cube category (with connections)~\(\Cube\) has the sets~\(\TWO^{n}\) with~\(n\ge0\) as objects.
The morphisms are generated by the maps
\begin{align}
  \delta^{\epsilon}_{i}\colon \TWO^{n} &\to \TWO^{n+1}, & t &\mapsto (t_{1},\dots,t_{i-1},\epsilon,t_{i},\dots,t_{n}),
  \quad 0\le i\le n,\;\epsilon\in\{0,1\} \\
  \label{eq:cubecat:s}
  \sigma_{i}\colon \TWO^{n} &\to \TWO^{n-1}, & t &\mapsto (t_{1},\dots,t_{i-1},t_{i+1},\dots,t_{n}),
  \quad 1\le i\le n, \\
  \gamma_{i}\colon \TWO^{n} &\to \TWO^{n-1}, & t &\mapsto (t_{1},\dots,t_{i-1},\min(t_{i},t_{i+1}),\dots,t_{n}),
  \quad 1\le i<n.
\end{align}
Cartesian products of sets give a monoidal structure to the cube category.
The morphisms are monoidally generated by~\(\delta^{\epsilon}_{1}\colon\TWO^{0}\to\TWO^{1}\) with~\(\epsilon\in\{0,1\}\),
 \(\sigma_{1}\colon\TWO^{1}\to\TWO^{0}\) and~\(\gamma_{1}\colon\TWO^{2}\to\TWO^{1}\), compare \cite[Sec.~2.4]{MMRivera:2024}.

The assignment
\begin{equation}
  \TWO^{n} \mapsto \II^{n}
\end{equation}
becomes a monoidal functor~\(\Cube\to\sSet\) if we map the morphisms as follows:
The morphism~\(\delta^{\epsilon}_{1}\colon\TWO^{0}\to\TWO^{1}\) is sent to the inclusion of~\(\II^{0}=\Simp^{0}\) as the vertex~\(\epsilon\in\{0,1\}\) of~\(\II\),
and \(\sigma_{1}\colon\TWO^{1}\to\TWO^{0}\) to the projection~\(\II\to\Simp^{0}\).
The simplicial morphism associated to~\(\gamma_{1}\colon\TWO^{2}\to\TWO^{1}\) sends
the two non-degenerate \(2\)-simplices~\(u_{(1,2)}\) and~\(u_{(2,1)}\) to~\(s_{0}u_{(1)}\).
This is well-defined by \Cref{thm:triang-map} because the diagonal~\(d_{1}u_{(1,2)}=d_{1}u_{(2,1)}\) of the square
is send to~\(u_{(1)}\) for both simplices, compare \Cref{fig:square}.

Recall that the \(m\)-simplices in the simplicial \(n\)-cube~\(\II^{n}\) are in bijection
with the weakly increasing sequences of length~\(m+1\) in the set~\(\TWO^{n}\) representing the vertices of such a simplex.
One can verify from the description of the generating morphisms above
that any morphism~\(\lambda\colon \TWO^{n}\to \TWO^{n'}\) in the cube category preserves
weakly increasing sequences, so that it defines a simplicial map~\(\lambda_{*}\colon\II^{n}\to\II^{n'}\) between simplicial cubes.
The assignment~\(\lambda\mapsto\lambda_{*}\) is functorial.

Let \(n\ge1\).
We say that a permutation~\(\tilde\pi\in S_{n-1}\) is obtained from~\(\pi\in S_{n}\) ``by removing the assignment \(i\mapsto\pi(i)\)''
if the remaining elements of~\(\{1,\dots,n\}\) are renumbered and the mappings between them preserved.
We may also describe this by saying that \(\pi\) is obtained from~\(\tilde\pi\) ``by adding the assignment \(i\mapsto\pi(i)\)''.
In formulas,
\begin{equation}
  \tilde\pi(j) = \begin{cases}
    \pi(j) & \text{if \(j<i\) and \(\pi(j)<\pi(i)\),} \\
    \pi(j)-1 & \text{if \(j<i\) and \(\pi(j)>\pi(i)\),} \\
    \pi(j-1) & \text{if \(j>i\) and \(\pi(j)<\pi(i)\),} \\
    \pi(j-1)-1 & \text{if \(j>i\) and \(\pi(j)>\pi(i)\).}
  \end{cases}
\end{equation}
For example, if
\begin{equation}
  \pi = \begin{pmatrix}1 & 2 & 3 & 4 \\ 3 & 4 & 2 & 1\end{pmatrix}\in S_{4}
  \qquad\text{and}\qquad
  \tilde\pi = \begin{pmatrix}1 & 2 & 3 \\ 2 & 3 & 1\end{pmatrix}\in S_{3},
\end{equation}
then \(\tilde\pi\) is obtained from~\(\pi\) by removing the assignment~\(3\mapsto2\).

\begin{lemma}
  \label{thm:d-u-tilde-pi}
  Let \(\pi\in S_{n-1}\) and \(1\le i\le n\).
  \begin{enumroman}
  \item If \(\tilde\pi\in S_{n}\) be obtained from~\(\pi\) by adding the assignment~\(1\mapsto i\), then
    \begin{equation*}
      d_{0}\,u_{\tilde\pi} = (\delta^{1}_{i})_{*}\,u_{\pi}.
    \end{equation*}
  \item If \(\tilde\pi\in S_{n}\) be obtained from~\(\pi\) by adding the assignment~\(n\mapsto i\), then
    \begin{equation*}
      d_{n}\,u_{\tilde\pi} = (\delta^{0}_{i})_{*}\,u_{\pi}.
    \end{equation*}
  \end{enumroman}
\end{lemma}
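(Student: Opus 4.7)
The plan is to work in partition form throughout and verify both identities entry-by-entry; part~(ii) is symmetric to part~(i), so I focus on the latter.

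First, I would unpack the recipe defining~\(\tilde\pi\): adding the assignment~\(1\mapsto i\) to~\(\pi\in S_{n-1}\) gives \(\tilde\pi(1)=i\) together with \(\tilde\pi(j)=\pi(j-1)\) if~\(\pi(j-1)<i\) and \(\tilde\pi(j)=\pi(j-1)+1\) if~\(\pi(j-1)\ge i\), for \(j\ge 2\). Writing \(u_{\tilde\pi}=(\emptyset,i,\tilde\pi(2),\dots,\tilde\pi(n),\emptyset)\) and applying~\eqref{eq:d-j-u} with~\(j=0\) to merge the first two parts yields
\begin{equation*}
  d_{0}\,u_{\tilde\pi} = \bigl(\{i\},\tilde\pi(2),\dots,\tilde\pi(n),\emptyset\bigr).
\end{equation*}

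Next, I would compute~\((\delta^{1}_{i})_{*}\,u_{\pi}\) from the action of~\(\delta^{1}_{i}\) on vertices: it inserts a~\(1\) at position~\(i\), shifting the coordinates at positions~\(\ge i\) up by one. So in the new simplex, the newly introduced component (now labeled~\(i\)) is constantly~\(1\) and thus carries zero~\(0\)'s, placing the label~\(i\) in the cell~\(u_{0}\); and each original component numbered~\(\pi(j)\) acquires the new label~\(\sigma(\pi(j))\), where \(\sigma(k)=k\) for~\(k<i\) and \(\sigma(k)=k+1\) for~\(k\ge i\), while keeping its \(k\)-value~\(j\) unchanged. This gives
\begin{equation*}
  (\delta^{1}_{i})_{*}\,u_{\pi} = \bigl(\{i\},\sigma(\pi(1)),\dots,\sigma(\pi(n-1)),\emptyset\bigr).
\end{equation*}
Comparing with the first display, the recipe for~\(\tilde\pi\) from the first step yields precisely \(\sigma(\pi(j))=\tilde\pi(j+1)\), and the two expressions match.

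Part~(ii) follows by the mirror argument: the constant-\(0\) component inserted by~\(\delta^{0}_{i}\) has all its vertices equal to~\(0\) and hence occupies the last cell~\(u_{n}\) of the new partition, while \(d_{n}\) merges~\(u_{n}\) with the trailing empty part, again reproducing the corresponding rewriting of~\(\tilde\pi\). The only modest obstacle I anticipate is bookkeeping: confirming that the inserted component drops into the correct cell (\(u_{0}\) versus~\(u_{n}\) depending on~\(\epsilon\)) and that the shift~\(\sigma\) agrees with the prescription in the definition of~\(\tilde\pi\). Once this is unwound, the verification is immediate.
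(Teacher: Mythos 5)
Your proof is correct and takes essentially the same approach as the paper's: both compare~\(d_0\,u_{\tilde\pi}\) and~\((\delta^1_i)_*\,u_\pi\) in partition form. The paper phrases the identification through the matrix picture (``insert a row of~\(1\)'s at position~\(i\)''), while you unpack the action of~\(\delta^1_i\) on vertices and the relabelling~\(\sigma\) explicitly and then match entries; these are the same underlying computation.
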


\begin{proof}
  We only discuss the first claim; the second one is analogous.

  If \(\tilde\pi\in S_{n}\) be obtained from~\(\pi\) by adding the assignment~\(1\mapsto i\), then
  \(d_{0}\,u_{\tilde\pi}=(\tilde\pi(1),\dots,\tilde\pi(n),\emptyset)\) (in partition form)
  differs from~\(u_{\pi}=(\emptyset,\pi(1),\dots,\pi(n-1),\emptyset)\)
  by the entry~\(\tilde\pi(1)=i\) in the first position and by increasing all other entries that are~\(\ge i\).
  In the matrix representation~\eqref{eq:simplex-matrix}, this corresponds to adding a row consisting entirely of~\(1\)'s
  before the \(i\)-th row. This means that \(u_{\tilde\pi}\in\II^{n}\) is obtained from~\(u_{\pi}\in\II^{n-1}\)
  by adding an \((n-1)\)-simplex that lies over the end point~\(1\) of the simplicial interval in the \(i\)-th coordinate.
  This is exactly what the map~\((\delta^{1}_{i})_{*}\) does.
\end{proof}

\begin{lemma}
  \label{thm:s-u-tilde-pi}
  Let \(\pi\in S_{n+1}\) and \(1\le i\le n+1\).
  \begin{enumroman}
  \item \label{thm:s-u-tilde-pi-1}
    If \(\tilde\pi\) be obtained from~\(\pi\) by removing the assignment~\(j\mapsto\pi(j)=i\), then
    \begin{equation*}
      s_{j-1}\,u_{\tilde\pi} = (\sigma_{i})_{*}u_{\pi}.
    \end{equation*}
  \item \label{thm:s-u-tilde-pi-2}
  Assume \(i\le n\), and set \(j=\min(\pi^{-1}(i),\pi^{-1}(i+1))\).
    If \(\tilde\pi\) is obtained from~\(\pi\) by removing the assignment~\(j\mapsto\pi(j)\), then
    \begin{equation*}
      s_{j-1}\,u_{\tilde\pi} = (\gamma_{i})_{*}u_{\pi}.
    \end{equation*}
  \end{enumroman}
\end{lemma}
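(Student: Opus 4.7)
The plan is to follow the matrix-form strategy of the proof of \Cref{thm:d-u-tilde-pi}. Recall that the matrix of $u_\pi\in\II^{n+1}$ has $n+1$ rows and $n+2$ columns, with the $i'$-th row being $0$ in columns $0,\dots,\pi^{-1}(i')-1$ and $1$ in columns $\pi^{-1}(i'),\dots,n+1$, and that any cubical morphism $\lambda$ acts on a simplex by applying $\lambda$ columnwise to its vertex matrix. For both parts I therefore intend to compute the matrix of the right-hand side, exhibit two adjacent coinciding columns (which forces a factorization through $s_{j-1}$) and identify the remaining non-degenerate $n$-simplex with $u_{\tilde\pi}$.

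For (i), the operation $(\sigma_i)_*$ just deletes the $i$-th row of the matrix (relabelling the remaining rows). In $u_\pi$, the only row transitioning between columns $j-1$ and $j$ is precisely row~$i$, so after its deletion these two columns agree. Removing one of them leaves an $n\times(n+1)$ matrix whose $i''$-th row transitions exactly at the position prescribed by $\tilde\pi$, so the remaining simplex is $u_{\tilde\pi}$.

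For (ii), the operation $(\gamma_i)_*$ replaces row~$i$ by the coordinatewise minimum of rows $i$ and $i+1$ and deletes row $i+1$. Setting $j'=\max(\pi^{-1}(i),\pi^{-1}(i+1))>j$, the new row~$i$ transitions only at column $j'$ and is therefore $0$ in both columns $j-1$ and $j$. The remaining rows transition at positions in $\{1,\dots,n+1\}\setminus\{j,j'\}$, so they also agree in columns $j-1$ and~$j$. Deleting one of those columns yields the matrix of $u_{\tilde\pi}$, which is checked by comparing row positions with the case-by-case definition of $\tilde\pi$ (treating $\pi(j)=i$ and $\pi(j)=i+1$ separately).

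The main obstacle is purely combinatorial bookkeeping: both the column deletion and the codomain renumbering built into $\tilde\pi$ cause index shifts that must be tracked consistently, and in (ii) one must verify that the same simplex $u_{\tilde\pi}$ arises regardless of whether the removed value is $i$ or $i+1$. Once the two coinciding columns have been located, however, the underlying non-degenerate simplex is unique, reducing the identification to a mechanical check against the explicit formula for $\tilde\pi$ recalled just before \Cref{thm:d-u-tilde-pi}.
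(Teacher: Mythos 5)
Your proposal is correct and takes essentially the same route as the paper: both identify $(\sigma_i)_*$ as deletion of row $i$ of the vertex matrix and $(\gamma_i)_*$ as merging rows $i,i+1$ by a coordinatewise minimum, and both then match the result against $s_{j-1}u_{\tilde\pi}$. The only cosmetic difference is direction — you compute $(\sigma_i)_*u_\pi$ and $(\gamma_i)_*u_\pi$ and exhibit the repeated column, whereas the paper starts from $s_{j-1}u_{\tilde\pi}$ in partition form and translates the discrepancy with $u_\pi$ into the row operation — but the computational content is identical.
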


\begin{proof}
    Assume that \(\tilde\pi\) be obtained from~\(\pi\) by removing the assignment~\(j\mapsto\pi(j)=i\).
    Then \(s_{j-1}\,u_{\tilde\pi}\) differs from~\(u_{\pi}\) by replacing the value~\(i\) by the empty set
    and decreasing all larger values by~\(1\). In the matrix representation~\eqref{eq:simplex-matrix},
    this corresponds to deleting the \(i\)-th row. in other words, the \(i\)-th coordinate of~\(u_{\pi}\) is removed.
    This is the effect of the simplicial map~\((\sigma_{i})_{*}\).

    We now consider the second claim. We write \(\pi(j)=i'\in\{i,i+1\}\) for the value that precedes the other one
    in the sequence~\((\pi(1)),\dots,\pi(n+1))\). Then \(s_{j-1}\,u_{\tilde\pi}\) differs from~\(u_{\pi}\)
    by replacing \(i'\) by the empty set and decreasing all larger values by~\(1\). In the matrix representation~\eqref{eq:simplex-matrix},
    this corresponds to deleting either the row at position~\(i\) or the following one,
    whichever contains fewer \(0\)'s. Hence the remaining row is obtained from these two adjacent rows
    by taking the minimum in each coordinate. That is exactly what the map~\((\gamma_{i})_{*}\) does.
\end{proof}

The standard \(n\)-cube~\(\Cube^{n}\) is the cubical set with \(k\)-cubes~\(\Cube^{n}_{k}=\Hom(\TWO^{k},\TWO^{n})\)
and the structure maps induced from the morphisms in the cube category. Then \(\Cube^{n}=(\Cube^{1})^{n}\) as cubical sets.
Moreover, the dgc isomorphism~\(C(\Cube^{1})=C(\II)\) together with the shuffle map~\eqref{eq:shuffle-map}
induces a dgc isomorphism~\(C(\Cube^{n})=C(\II)^{\otimes n}\).

\section{Triangulating cubical sets}
\label{sec:triangulate-cubical-sets}

Given an \(n\)-cube~\(y\) in a cubical set~\(Y\), we use the abbreviations
\begin{equation}
  \TWO_{y}=\TWO^{n},
  \qquad
  \Cube_{y}=\Cube^{n}
  \qquad
  \II_{y}=\II^{n}
  \qquad\text{and}\qquad
  I_{y}=I^{n},
\end{equation}
where the latter three symbols denote the cubical, simplicial and topological \(n\)-cube, respectively.

Since any cubical set~\(Y\) is the colimit over its cubes, we get a triangulation functor~\(\TT\)
from the category of cubical sets to that of simplicial sets as follows:
We consider the disjoint union of simplicial cubes that has one simplicial cube~\(\II_{y}\)
for each cube~\(y\in Y\). We denote a simplex~\(u\in\II_{y}\) also by~\((y, u)\).
In the simplicial set~\(\TT Y\) associated to a cubical set~\(Y\),
the simplex~\((\lambda^{*}y,u)\) is identified with the simplex~\((y,\lambda_{*}u)\)
for any~\(y\in Y\) and any morphism~\(\lambda\colon \TWO_{\lambda^{*}y}\to\TWO_{y}\) in the cube category,
\begin{equation}
  \TT Y = \bigsqcup_{y\in Y} \II_{y} \,\Bigm/ (\lambda^{*}y,u) \sim (y,\lambda_{*}u).
\end{equation}
We write \([y,u]\) for the equivalence class of~\(u\in\II_{y}\) in~\(\TT Y\).
Note that the definition of~\(\TT Y\) is analogous to that of the geometric realization of~\(Y\),
\begin{equation}
  |Y| = \bigsqcup_{y\in Y} I_{y} \,\Bigm/ (\lambda^{*}y,u) \sim (y,\lambda_{*}u).
\end{equation}
Here \(\lambda_{*}\colon I_{\lambda^{*}y}\to I_{y}\)
denotes the geometric realization of the morphism~\(\lambda\) (which have appeared in \Cref{ex:singular-cubes} already).
We refer to~\cite[\S 14]{May:1968} for the similarly defined geometric realization~\(|X|\) of a simplicial set~\(X\).

\begin{lemma}
  \label{thm:homeo-TY-Y}
  For any cubical set~\(Y\) there is a homeomorphism
  \begin{equation*}
    |\TT Y| \approx |Y|,
  \end{equation*}
  natural in~\(Y\).
\end{lemma}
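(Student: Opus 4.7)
The plan is to exhibit both $|\TT Y|$ and $|Y|$ as colimits of the same shape and match them up via a natural homeomorphism on the building blocks. The proof splits into three steps.

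First, I would establish a natural homeomorphism $h_{n}\colon|\II^{n}|\to I^{n}$ for every $n\ge0$. Since $\II=\Simp^{1}$ and $|\Simp^{1}|\approx I$ canonically, and since $\II^{n}=(\Simp^{1})^{n}$, one obtains $h_{n}$ from the fact that geometric realization of simplicial sets commutes with finite products in the category of compactly generated spaces (each $\II^{n}$ is in any case a finite simplicial set). Concretely, given an $m$-simplex $u\in\II^{n}$ in matrix form with columns $v^{0}\le\dots\le v^{m}$ in $\TWO^{n}$, the map $h_{n}$ sends a point with barycentric coordinates $(a_{0},\dots,a_{m})$ to $\sum_{j} a_{j}\,v^{j}\in I^{n}$, and one checks this is a homeomorphism.

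Second, I would verify that the family $(h_{n})_{n\ge0}$ is natural with respect to morphisms in the cube category. That is, for each $\lambda\colon\TWO^{n}\to\TWO^{n'}$ the diagram
\begin{equation*}
  \begin{tikzcd}
    |\II^{n}| \ar[r,"|\lambda_{*}|"] \ar[d,"h_{n}"'] & |\II^{n'}| \ar[d,"h_{n'}"] \\
    I^{n} \ar[r,"\lambda_{*}"'] & I^{n'}
  \end{tikzcd}
\end{equation*}
commutes, where the bottom map is the geometric realization of $\lambda$ from \Cref{ex:singular-cubes}. By monoidality and functoriality it suffices to check this for the three monoidal generators $\delta^{\epsilon}_{1}$, $\sigma_{1}$, $\gamma_{1}$. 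The first two are immediate from the construction of $\II\to\Cube^{1}$ and the projection $\II\to\Simp^{0}$. The case of $\gamma_{1}$ is the crucial one: here the simplicial map on $\II^{2}$ sends both $u_{(1,2)}$ and $u_{(2,1)}$ to $s_{0}u_{(1)}\in\II$, and one has to verify that under $h_{2}$ and $h_{1}$ this corresponds to $(b_{1},b_{2})\mapsto\min(b_{1},b_{2})$. Writing out the barycentric coordinates in each of the two triangles of $|\II^{2}|$ confirms this.

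Third, I would assemble the pieces using that both geometric realizations are left adjoints, hence preserve colimits. The definition of $\TT Y$ presents it as the colimit, in simplicial sets, of the diagram $y\mapsto\II_{y}$ indexed by the cubes of $Y$ and the cube category morphisms between them; likewise $|Y|$ is the same colimit of $y\mapsto I_{y}$ in topological spaces. Applying $|{-}|$ to the first colimit and using that it commutes with colimits gives
\begin{equation*}
  |\TT Y| \;\cong\; \bigsqcup_{y\in Y} |\II_{y}| \Bigm/ \bigl(\lambda^{*}y,|u|\bigr)\sim\bigl(y,|\lambda_{*}|(|u|)\bigr).
\end{equation*}
The natural homeomorphisms $h_{n}$ of step one, compatible with the cube category morphisms by step two, induce a homeomorphism of this colimit with $|Y|$; naturality in $Y$ is automatic since everything is constructed from natural transformations of colimit diagrams.

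The main obstacle is step two, specifically the $\gamma_{1}$ case, since it is the only place where the combinatorics of cutting the square along a diagonal has to be matched with the geometric minimum function. Once that identification is checked, the rest is formal from the colimit descriptions and the adjunction properties of geometric realization.
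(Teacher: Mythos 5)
Your proposal is correct and takes essentially the same route as the paper: build the homeomorphism $|\II^{n}|\to I^{n}$ from $|\II|\approx I$ and the product property, check compatibility with the cube-category morphisms (the paper too singles out the connection $\gamma_{i}$ as the only nontrivial check and gives the explicit map, which agrees with your $[u,a]\mapsto\sum_{j}a_{j}v^{j}$), then pass to the colimit. The only differences are notational (matrix form versus partition form) and that you spell out the colimit-preservation argument slightly more explicitly.
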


See Carranza--Kapulkin~\cite[Lemma~2.23]{CarranzaKapulkin:2023} for the analogous result
in the context of cubical sets with two kinds of connection operators.

\begin{proof}
  The homeomorphism~\(|\II|\approx\TopI\) induces a homeomorphism
  \begin{equation}
    \label{eq:homeo-IIn}
    |\II^{n}| \approx |\II|^{n} \approx \TopI^{n}
  \end{equation}
  since \(\II\) is a countable simplicial set, \cf~\cite[Thm.~14.3, Rem.~14.4]{May:1968}.

  One can check that this homeomorphism is compatible with cubical morphisms.
  For the maps~\(\delta_{i}^{\epsilon}\) and~\(\sigma_{i}\) this follows from the \(1\)-dimensional case.
  For the connections~\(\gamma_{i}\) it is useful to have the explicit maps
  \begin{align}
    |\II^{n}| &\to \TopI^{n},
    \quad
    [u,a] \mapsto b=(b_{1},\dots,b_{n})
  \end{align}
  for an \(m\)-simplex~\(u\) (in partition form) and \(a=(a_{0},\dots,a_{m})\in\Delta^{m}\subset\R^{m+1}\), where
  \begin{equation}
    b_{j} = a_{i}+\dots+a_{m},
  \end{equation}
  and \(i\in\{0,\dots,m+1\}\) is determined by the condition~\(j\in u_{i}\).
  (Recall from \Cref{ex:singular-simplex} that the coordinates of a point~\(a\in\Delta^{m}\) sum up to~\(1\).)
  Equivalently, the \(m\)-simplex in the \(j\)-th component of~\(u\) starts with \(i\)~zeroes.
  The inverse map is as follows: Given \(b\in\TopI^{n}\), choose a permutation~\(\pi\in S_{n}\) such that
  \begin{equation}
    b_{\pi(1)} \ge \dots \ge b_{\pi(n)}.
  \end{equation}
  Then \(b\) is sent to~\([u_{\pi},a]\), where
  \begin{equation}
    a_{0} = 1 - b_{\pi(1)},
    \quad
    a_{i} = b_{\pi(i)} - b_{\pi(i+1)}
    \quad\text{for~\(0<i<n\),}\quad
    a_{n} = b_{\pi(n)}.
  \end{equation}

  As a consequence, the homeomorphism~\eqref{eq:homeo-IIn} descends to the geometric realizations~\(|\TT Y|\) and~\(|Y|\).
\end{proof}

\begin{proposition}
  Let \(Y\) be a cubical set. The map
  \begin{equation*}
    \tt=\tt_{Y}\colon C(Y) \mapsto C(\TT Y),
    \qquad
    y \mapsto \sum_{\pi\in S_{n}} (-1)^{\deg\pi}\,(y,u_{\pi})
  \end{equation*}
  where \(n=\deg y\), is a quasi-isomorphism of dgcs, natural in~\(Y\).
\end{proposition}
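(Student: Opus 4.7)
The plan is to verify, in order, that \(\tt\) descends to normalized chains, is a chain map, is a coalgebra map, and is a quasi-isomorphism; naturality in~\(Y\) is immediate from the formula.

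For well-definedness on normalized chains I would use \Cref{thm:s-u-tilde-pi}: if \(y=s_{i}\,y'\) then each term \((y,u_{\pi})=(y',(\sigma_{i})_{*}u_{\pi})\) is degenerate by part~\ref{thm:s-u-tilde-pi-1} of that lemma, and part~\ref{thm:s-u-tilde-pi-2} handles folded cubes. For the chain-map identity, I expand \(d\,\tt(y)=\sum_{\pi}(-1)^{\deg\pi}\sum_{j=0}^{n}(-1)^{j}(y,d_{j}u_{\pi})\). The interior terms (\(0<j<n\)) cancel pairwise by \Cref{thm:simplices-adjacent}, which gives \(d_{j}u_{\pi}=d_{j}u_{\pi\tau}\) for the adjacent transposition \(\tau=(j,j+1)\) while \((-1)^{\deg\pi}=-(-1)^{\deg{\pi\tau}}\). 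The \(j=0\) terms rewrite via \Cref{thm:d-u-tilde-pi} as \((d^{1}_{\pi(1)}y,u_{\tilde\pi})\); grouping by \(i=\pi(1)\) and using the sign relation \((-1)^{\deg\pi}=(-1)^{i-1}(-1)^{\deg{\tilde\pi}}\) yields \(-\sum_{i}(-1)^{i}\,\tt(d^{1}_{i}y)\). An analogous computation for \(j=n\) produces \(\sum_{i}(-1)^{i}\,\tt(d^{0}_{i}y)\), and the two contributions combine to~\(\tt(d\,y)\).

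For coalgebra compatibility, I apply the Alexander--Whitney diagonal to each~\((y,u_{\pi})\) and reparametrize the double sum over \((\pi,k)\) via the bijection~\(\Psi_{k,l}\) of \Cref{thm:Psi-k-l} as a sum indexed by~\((k,l,\alpha,\beta,\sigma,\tau)\). \Cref{thm:decomposition-simplex-product-cube} writes \(u_{\pi}\) as a product simplex in \(\II^{k}\times\II^{l}\), so its front \(k\)-face and back \(l\)-face decompose componentwise; the resulting degeneracies are absorbed by the equivalence relation defining~\(\TT Y\) into the cubical face operators, identifying \((y,u_{\pi}(0\dots k))\) with \((d^{0}_{\beta_{1}}\cdots d^{0}_{\beta_{l}}y,u_{\sigma})\) and \((y,u_{\pi}(k\dots n))\) with \((d^{1}_{\alpha_{1}}\cdots d^{1}_{\alpha_{k}}y,u_{\tau})\). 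Combined with the sign decomposition \((-1)^{\deg\pi}=(-1)^{(\alpha,\beta)}(-1)^{\deg\sigma+\deg\tau}\) inherited from~\(\Psi_{k,l}\), this reassembles \(\Delta\,\tt(y)\) as \((\tt\otimes\tt)\Delta\,y\) for the Serre diagonal.

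For the quasi-isomorphism I would pass to the skeletal filtration of~\(Y\); \(\tt\) is compatible with this filtration by naturality, and the associated short exact sequences of normalized chains on both sides, together with the five-lemma, reduce the claim to \(Y=\Cube^{n}\). There both complexes are acyclic with \(H_{0}=\kk\), and \(\tt\) identifies the canonical generators in degree zero. Alternatively one may argue via the homeomorphism \(|\TT Y|\approx|Y|\) of \Cref{thm:homeo-TY-Y}, comparing both sides to singular chains on the common realization. The principal obstacle is the coalgebra step, where the synchronisation of~\(\Psi_{k,l}\) with \Cref{thm:decomposition-simplex-product-cube} and the sign bookkeeping require the most work; the remaining three steps are essentially direct applications of the lemmas of Sections~3 and~4.
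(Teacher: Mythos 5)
Your overall four-part plan is sound and the first two steps (descent to normalized chains via \Cref{thm:s-u-tilde-pi}, chain-map identity via \Cref{thm:simplices-adjacent} and \Cref{thm:d-u-tilde-pi}) check out. But the approach as a whole is considerably heavier than the paper's, and your coalgebra step contains a genuine indexing error.

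The paper avoids all three algebraic verifications at once by observing that, under the dgc identification \(C(\Cube^{n})=C(\II)^{\otimes n}\), the map \(\tt\) restricted to a single cube \emph{is} the shuffle map \(\shuffle\colon C(\II)^{\otimes n}\to C(\II^{n})\) applied to the top generator. That the shuffle map is a dgc morphism is cited from Eilenberg--Moore (17.6), and naturality in the cubical variable takes care of passing to a general \(Y\). Your direct computation of the differential is a correct but unnecessary re-derivation of a special case of this; the normalization step likewise comes for free. For the quasi-isomorphism you arrive at essentially the same skeletal argument as the paper (its cellular phrasing via \(|\TT Y|\approx|Y|\)), but note that reducing to ``\(Y=\Cube^{n}\) where both sides are acyclic with \(H_{0}=\kk\)'' is not quite the relevant statement: the associated graded of the skeletal filtration is the \emph{relative} complex, so what one actually needs is that \(\shuffle\) induces an isomorphism \(H_{*}(\Cube^{n},\partial\Cube^{n})\to H_{*}(\II^{n},\partial\II^{n})\), since \(C(\II^{n})\) has many nondegenerate simplices of dimension less than \(n\) that are not in the boundary (the diagonal of the square, for instance).

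The coalgebra step has a real gap. Writing \(\Psi_{k,l}^{-1}(\pi)=((\alpha,\beta),\sigma,\tau)\), the sets \(\alpha,\beta\) record \emph{positions} \(j\) with \(\pi(j)\le k\), resp.\ \(\pi(j)>k\). But the front \(k\)-face \(u_{\pi}(0\dots k)\), in matrix form, has its all-zero rows precisely at the \emph{values} \(B\coloneqq\pi(\{k+1,\dots,n\})\), so the correct identification is with \((d^{0}_{B_{1}}\cdots d^{0}_{B_{l}}\,y,\,u_{\sigma'})\) for that complementary set of \emph{values}, not \((d^{0}_{\beta_{1}}\cdots d^{0}_{\beta_{l}}\,y,\,u_{\sigma})\). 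These differ in general: for \(\pi=(2,3,1)\in S_{3}\) and \(k=1\) one has \(\beta=\{1,2\}\) while \(B=\{1,3\}\), and indeed \(u_{\pi}(0\dots1)=(\emptyset,2,\{1,3\})\), which equals \((\delta^{0}_{1}\delta^{0}_{3})_{*}u_{(1)}\) and not \((\delta^{0}_{1}\delta^{0}_{2})_{*}u_{(1)}\). The permutations \(\sigma,\tau\) produced by \(\Psi_{k,l}\) also do not match what the front/back faces actually produce. Moreover, \Cref{thm:decomposition-simplex-product-cube} decomposes \(u_{\pi}\) as a product over the \emph{coordinate} splitting \(\II^{k}\times\II^{l}\), which is unrelated to the Alexander--Whitney front/back splitting; the degenerations it introduces do not, as claimed, get ``absorbed into cubical face operators''. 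One could repair this with a ``value-based'' analogue of \(\Psi_{k,l}\) (essentially replacing \(\pi\) by \(\pi^{-1}\) somewhere and re-deriving the sign identity), but as written the reassembly into \((\tt\otimes\tt)\Delta\,y\) does not go through. Citing the dgc property of the shuffle map, as the paper does, sidesteps the whole issue.
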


\begin{proof}
  Recall that \(C(\Cube^{n}) = C(\II)^{\otimes n}\) as dgcs.
  We can therefore consider the shuffle map as a dgc map
  \begin{equation}
    \shuffle\colon C(\Cube^{n}) = C(\II)^{\otimes n} \to C\bigl(\II^{n}\bigr),
  \end{equation}
  which descends to the stated dgc morphism~\(\tt_{Y}\).

  Let us now verify that \(\tt\) is a quasi-isomorphism. We will use that
  \(|Y|\) is a CW~complex with one \(n\)-cell for each cube~\(y\in Y_{n}\) that is neither degenerate nor folded,
  see \cite[Prop.~II.1.8~\&~p.~58]{Baues:1980}.
  Similarly, \(|\TT Y|\) is a CW~complex with one \(n\)-cell for each non-degenerate \(n\)-simplex, \cf~\cite[Thm.~14.1]{May:1968}.
  It follows from the description of the homeomorphism~\(|\TT Y|\to|Y|\) in the proof of \Cref{thm:homeo-TY-Y}
  that the cell given by~\(y\) as above corresponds to the union of the cells~\([y,u]\)
  where \(u\in\II_{y}\) is non-degenerate and not contained in the boundary of~\(\II_{y}\).

  Since the shuffle map induces an isomorphism
  \begin{equation}
    H_{*}(\Cube^{n},\partial\Cube^{n}) \to H_{*}(\II^{n},\partial\II^{n})
  \end{equation}
  for any~\(n\ge0\), we also get an isomorphism
  \begin{equation}
    H_{*}\bigl(|Y|_{n},|Y|_{n-1}\bigr) \to H_{*}\bigl(|\TT Y|_{n},|\TT Y|_{n-1}\bigr)
  \end{equation}
  where \(|Y|_{n}\subset|Y|\) is the \(n\)-skeleton of~\(|Y|\) and \(|\TT Y|_{n}\) its image in~\(|\TT Y|\).
  It follows inductively that the shuffle map is a quasi-isomorphism for all skeleta of~\(|Y|\), hence for~\(|Y|\) itself.
\end{proof}

\begin{proposition}
  \label{thm:triangulation-product}
  Let \(Y\) and~\(Z\) be cubical sets. There is an isomorphism of simplicial sets
  \begin{equation*}
    \ff=\ff_{Y,Z}\colon \TT Y\times\TT Z \to \TT(Y\times Z)
  \end{equation*}
  that is natural in~\(Y\) and~\(Z\), associative in the obvious way and compatible with diagonals in the sense that
  \begin{equation*}
    \TT\Delta_{Y} = \ff\circ\Delta_{\TT Y}.
  \end{equation*}
  Moreover, the following diagram commutes:
  \begin{equation*}
    \begin{tikzcd}
      C(Y) \otimes C(Z) \arrow{d}[left]{\tt_{Y}\otimes\tt_{Z}} \arrow{rr}{\cong} & & C(Y\times Z) \arrow{d}{\tt_{Y\times Z}} \\
      C(\TT Y) \otimes C(\TT Z) \arrow{r}{\shuffle} & C(\TT Z\times\TT Z) \arrow{r}[above]{C(\ff)}[below]{\cong} & C\bigl(\TT(Y\times Z)\bigr)
    \end{tikzcd}
  \end{equation*}
\end{proposition}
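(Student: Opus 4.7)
The plan is to define $\ff$ explicitly on representatives, verify it descends to the quotients and admits an inverse, and then read off the commutativity of the diagram from \Cref{thm:decomposition-simplex-product-cube} combined with the bijection of \Cref{thm:Psi-k-l}.

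For the construction, given an $m$-simplex $([y,v],[z,w])\in\TT Y\times\TT Z$ with $y\in Y_{k}$, $z\in Z_{l}$, $v\in\II^{k}_{m}$, $w\in\II^{l}_{m}$, I set
\begin{equation*}
  \ff([y,v],[z,w]) = \bigl[[y,z],(v,w)\bigr],
\end{equation*}
where $(v,w)$ is the $m$-simplex in $\II^{k+l}$ obtained from the identification $\II^{k+l}=\II^{k}\times\II^{l}$ of simplicial sets. A basic equivalence $(\lambda^{*}y,v)\sim(y,\lambda_{*}v)$ in $\TT Y$ for a cube morphism $\lambda\colon\TWO^{a}\to\TWO^{b}$ is carried by $\ff$ to the equivalence in $\TT(Y\times Z)$ coming from the concatenated morphism $\lambda\sqcup\id_{l}\colon\TWO^{a+l}\to\TWO^{b+l}$; this uses both that the cubical structure on $Y\times Z$ acts componentwise and that $(\lambda\sqcup\id_{l})_{*}=\lambda_{*}\times\id$ under the product decomposition of $\II^{a+l}$. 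The inverse sends $[[y,z],u]$ to $([y,u^{Y}],[z,u^{Z}])$, where $u=(u^{Y},u^{Z})\in\II^{k}\times\II^{l}$ is the product decomposition. The only nontrivial compatibility to check is under the identification $(s_{k+1}y,z)\sim(y,s_{1}z)$ in $Y\times Z$: writing $u=(u^{Y},u^{Z})\in\II^{k+1}\times\II^{l-1}$ for the first representative and $u=(\tilde u^{Y},\tilde u^{Z})\in\II^{k}\times\II^{l}$ for the second, one observes $\tilde u^{Y}=(\sigma_{k+1})_{*}u^{Y}$ and $u^{Z}=(\sigma_{1})_{*}\tilde u^{Z}$ directly from the definition of the decompositions, so the two images agree in $\TT Y\times\TT Z$. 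Since face and degeneracy operators act independently on the $v$ and $w$ entries on both sides, $\ff$ is a simplicial isomorphism, and both naturality and associativity are immediate.

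For the diagram, I compute on an elementary tensor $y\otimes z$ with $y\in Y_{k}$, $z\in Z_{l}$. The top-right composite gives $\tt_{Y\times Z}([y,z])=\sum_{\pi\in S_{k+l}}(-1)^{\deg\pi}\bigl[[y,z],u_{\pi}\bigr]$. The left-bottom composite first produces $\sum_{\sigma,\tau}(-1)^{\deg\sigma+\deg\tau}[y,u_{\sigma}]\otimes[z,u_{\tau}]$; after applying the shuffle map and $C(\ff)$ this becomes
\begin{equation*}
  \sum_{\sigma,\tau,(\alpha,\beta)}(-1)^{\deg\sigma+\deg\tau+(\alpha,\beta)}\,\bigl[[y,z],(s_{\beta-1}u_{\sigma},s_{\alpha-1}u_{\tau})\bigr].
\end{equation*}
By \Cref{thm:decomposition-simplex-product-cube} the inner simplex equals $u_{\pi}$ for $\pi=\Psi_{k,l}((\alpha,\beta),\sigma,\tau)$, and since $\pi_{\alpha,\beta}$ has sign $(-1)^{(\alpha,\beta)}$ and the sign is multiplicative under both concatenation and composition of permutations, the three sign factors combine to $(-1)^{\deg\pi}$. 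Reindexing the sum by $\pi\in S_{k+l}$ via the bijection $\Psi_{k,l}$ of \Cref{thm:Psi-k-l} recovers the top-right expression.

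The main obstacle is controlling the well-definedness of $\ff^{-1}$ at the cubical identification $(s_{k+1}y,z)\sim(y,s_{1}z)$, which forces one to keep explicit track of how the simplicial product decomposition $\II^{k+l}=\II^{k}\times\II^{l}$ changes when the splitting point is shifted; once that is set up, the diagram commutativity is essentially a sign bookkeeping argument resting on \Cref{thm:decomposition-simplex-product-cube}.
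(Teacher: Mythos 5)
Your construction of $\ff$ and $\ff^{-1}$, and the verification that both descend through the cubical identification $(s_{k+1}y,z)\sim(y,s_1z)$, is essentially the paper's argument. (Minor slip: with $y\in Y_k$, $z\in Z_l$, a simplex of $\II_{(s_{k+1}y,z)}$ decomposes in $\II^{k+1}\times\II^{l}$ and a simplex of $\II_{(y,s_1z)}$ in $\II^{k}\times\II^{l+1}$, not in the dimensions $\II^{k+1}\times\II^{l-1}$ and $\II^k\times\II^l$ as you wrote; both ambient cubes have dimension $k+l+1$. The relations $\tilde u^Y=(\sigma_{k+1})_*u^Y$ and $u^Z=(\sigma_1)_*\tilde u^Z$ are otherwise correct.) Where you diverge is the commutativity of the diagram: you compute both composites termwise on $y\otimes z$ and reindex via the bijection $\Psi_{k,l}$ of \Cref{thm:Psi-k-l}, together with \Cref{thm:decomposition-simplex-product-cube}, checking that the three signs $(-1)^{\deg\sigma}$, $(-1)^{\deg\tau}$, $(-1)^{(\alpha,\beta)}$ combine to $(-1)^{\deg\pi}$ because $\pi=(\sigma\sqcup\tau)\circ\pi_{\alpha,\beta}$. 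The paper instead reduces to the diagram of $\tt$'s on individual standard cubes $\Cube_y$, $\Cube_z$ and $\Cube_{[y,z]}$, where $\tt$ is itself a shuffle map $C(\II)^{\otimes n}\to C(\II^n)$, so commutativity is just the associativity of the shuffle map. Your version makes the sign cancellation explicit and is a good sanity check; the paper's version is shorter and reuses the already-cited associativity of $\shuffle$. Both are correct and rest on the same underlying combinatorics.
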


The analogous result for cubical sets with two kinds of connection operators
has appeared in Doherty \emph{et al.}~\cite[Prop.~1.29]{DohertyEtAl:2024}.

\begin{proof}
  Let \(y\in Y_{k}\) and~\(z\in Z_{l}\). The map~\(\ff\) sends the \(m\)-simplex
  \begin{equation*}
    \bigl([y,v],[z,w]\bigr)\in\TT Y\times\TT Z
  \end{equation*}
  to~\(\bigl[[y,z],(v,w)\bigr]\in\TT(Y\times Z)\).
  Here \((v,w)\in\II_{[y,z]}\) is the product of the simplices \(v\in\II_{y}\) and~\(w\in\II_{z}\).

  For the other direction, let \(y\in Y\) be a \(k\)-cube and \(z\in Z\) an \(l\)-cube.
  Consider the mapping
  \begin{equation}
    \label{eq:triang-product-iso-inv}
    \bigl((y,z), u) \mapsto \Bigl( \bigl[y,(\pi_{1,k})_{*}u\bigr], \bigl[z,(\pi_{k+1,k+l})_{*}u\bigr] \Bigr)
  \end{equation}
  where \(\pi_{1,k}\colon\TWO^{k+l}\to\TWO^{k}\) is the projection onto the first~\(k\) 
  and \(\pi_{k+1,k+l}\colon\TWO^{k+l}\to\TWO^{l}\) that onto the last~\(l\) coordinates.
  The map~\eqref{eq:triang-product-iso-inv} is compatible with the identification~\((s_{k+1}\,y,z)\sim(y,s_{1}\,z)\) made for~\(Y\times Z\):
  Indeed, we have
  \begin{align}
    \bigl((s_{k+1}\,y,z), u) \mapsto\; & \Bigl( \bigl[s_{k+1}\,y,(\pi_{1,k+1})_{*}u\bigr], \bigl[z,(\pi_{k+2,k+l})_{*}u\bigr] \Bigr) \\
    \notag &= \Bigl( \bigl[y,(\sigma_{k+1}\pi_{1,k+1})_{*}u\bigr], \bigl[z,(\pi_{k+2,k+l})_{*}u\bigr] \Bigr) \\
    \notag &= \Bigl( \bigl[y,(\pi_{1,k})_{*}u\bigr], \bigl[z,(\pi_{k+2,k+l})_{*}u\bigr] \Bigr),
  \end{align}
  and also
  \begin{align}
    \bigl((y,s_{1}\,z), u) \mapsto\; & \Bigl( \bigl[y,(\pi_{1,k})_{*}u\bigr], \bigl[s_{1}\,z,(\pi_{k+1,k+l})_{*}u\bigr] \Bigr) \\
    \notag &= \Bigl( \bigl[y,(\pi_{1,k})_{*}u\bigr], \bigl[z,(\sigma_{1}\pi_{k+1,k+l})_{*}u\bigr] \Bigr) \\
    \notag &= \Bigl( \bigl[y,(\pi_{1,k})_{*}u\bigr], \bigl[z,(\pi_{k+2,k+l})_{*}u\bigr] \Bigr).
  \end{align}
  One can also check that \eqref{eq:triang-product-iso-inv} is compatible with the identifications coming
  from the definition of the triangulation of a cubical set and that the resulting bijection~\(\ff\) is simplicial.
  One can likewise check that \(\ff\) is associative and compatible with the diagonals.

  The diagram displayed above is induced by the diagram
  \begin{equation}
    \begin{tikzcd}
      C(\Cube_{y})\otimes C(\Cube_{z}) \arrow{d}[left]{\tt_{\Cube_{y}}\otimes\,\tt_{\Cube_{z}}} \arrow{r}{\cong} & C(\Cube_{[y,z]}) \arrow{d}{\tt_{\Cube_{[y,z]}}} \\
      C(\II_{y})\otimes C(\II_{z}) \arrow{r}{\shuffle} & C(\II_{[y,z]})
    \end{tikzcd}
  \end{equation}
  for all~\(y\in Y\) and~\(z\in Z\). The associativity of the shuffle map ensures that the latter commutes, hence also the former.
\end{proof}

\begin{corollary}
  \label{thm:triangulation-product-group}
  Let \(Y\) be a cubical group (or monoid). Then \(\TT Y\) is canonically a simplicial group (or monoid).
  Moreover, the dgc quasi-isomorphism
  \begin{equation*}
    \tt_{Y}\colon C(Y) \to C(\TT Y)
  \end{equation*}
  is multiplicative and therefore a quasi-isomorphism of dg~bialgebras.
\end{corollary}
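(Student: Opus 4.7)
The plan is to transport the cubical group structure on~\(Y\) through the triangulation functor~\(\TT\) by means of the natural and associative isomorphism~\(\ff_{Y,Z}\colon \TT Y\times \TT Z\to\TT(Y\times Z)\) from \Cref{thm:triangulation-product}. Since \(Y\) is a group object in cubical sets (with connections), it carries cubical maps~\(\mu\colon Y\times Y\to Y\), \(\eta\colon *\to Y\) and~\(\iota\colon Y\to Y\) satisfying the usual group axioms. Applying \(\TT\) to these and pre-composing with the appropriate instances of~\(\ff\), I define
\begin{equation*}
  \tilde\mu\colon\TT Y\times\TT Y \xrightarrow{\;\ff_{Y,Y}\;} \TT(Y\times Y) \xrightarrow{\;\TT\mu\;} \TT Y,
\end{equation*}
together with the analogous~\(\tilde\eta\) and~\(\tilde\iota\). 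The associativity and naturality of~\(\ff\) (and the fact that \(\TT\) preserves the terminal object) imply that \(\tilde\mu\), \(\tilde\eta\) and~\(\tilde\iota\) satisfy the simplicial group axioms; this is a formal diagram chase and I do not expect any surprises here.

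For the multiplicativity of~\(\tt_{Y}\), recall that the algebra structure on~\(C(Y)\) is the composition
\begin{equation*}
  C(Y)\otimes C(Y) = C(Y\times Y) \xrightarrow{\;C(\mu)\;} C(Y),
\end{equation*}
using the natural identification \(C(Y\times Z)=C(Y)\otimes C(Z)\) recalled in \Cref{sec:cubical}, while the algebra structure on~\(C(\TT Y)\) is
\begin{equation*}
  C(\TT Y)\otimes C(\TT Y) \xrightarrow{\;\shuffle\;} C(\TT Y\times\TT Y) \xrightarrow{\;C(\ff_{Y,Y})\;} C(\TT(Y\times Y)) \xrightarrow{\;C(\TT\mu)\;} C(\TT Y).
\end{equation*}
Multiplicativity of~\(\tt_{Y}\) therefore amounts to the outer rectangle of
\begin{equation*}
  \begin{tikzcd}
    C(Y)\otimes C(Y) \arrow[r,"\cong"] \arrow[d,"\tt_{Y}\otimes\tt_{Y}"'] & C(Y\times Y) \arrow[d,"\tt_{Y\times Y}"] \arrow[r,"C(\mu)"] & C(Y) \arrow[d,"\tt_{Y}"] \\
    C(\TT Y)\otimes C(\TT Y) \arrow[r,"\shuffle"] & C(\TT Y\times\TT Y) \arrow[r,"C(\ff_{Y,Y})"'] & C\bigl(\TT(Y\times Y)\bigr) \arrow[r,"C(\TT\mu)"'] & C(\TT Y)
  \end{tikzcd}
\end{equation*}
commuting. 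The left square is exactly the diagram in \Cref{thm:triangulation-product}, and the right square commutes by the naturality of~\(\tt\) applied to the cubical map~\(\mu\).

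Combined with the fact, established just before the statement, that \(\tt_{Y}\) is already a quasi-isomorphism of dgcs, this upgrades it to a quasi-isomorphism of dg~bialgebras. The main (and only real) obstacle is the careful bookkeeping needed to verify that all the group axioms transfer through~\(\TT\); everything else is a direct application of \Cref{thm:triangulation-product} and the naturality of the constructions involved.
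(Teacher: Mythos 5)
Your proposal matches the paper's argument: both define the product on \(\TT Y\) as \(\TT\mu\circ\ff_{Y,Y}\), invoke the associativity of \(\ff\) for the simplicial group axioms, and deduce multiplicativity of \(\tt_{Y}\) from the commutative diagram of \Cref{thm:triangulation-product} together with naturality. The only difference is that you spell out the right-hand square explicitly, which the paper leaves implicit.
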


\begin{proof}
  The product in~\(\TT Y\) is defined as the composition
  \begin{equation}
    \TT Y \times \TT Y \xrightarrow{\ff_{Y,Y}} \TT(Y\times Y) \xrightarrow{\TT\mu} \TT Y,
  \end{equation}
  where \(\mu\) is the product in~\(Y\).
  This results in a simplicial monoid because the isomorphism~\(\ff\) is associative.
  If \(Y\) is a cubical group with inversion map~\(\iota\colon Y\to Y\), then the compatibility of~\(\ff\) with the diagonals
  ensures that \(\TT Y\) becomes a simplicial group with inversion map~\(\TT\iota\).
  The commutative diagram from \Cref{thm:triangulation-product} finally implies that the map~\(\tt_{Y}\) is multiplicative.
\end{proof}

\begin{proposition}
  \label{thm:triangulation-map}
  Let \(Y\) be a cubical set, and let \(X\) be a simplicial set.
  Assume that for every~\(y\in Y\) we have a simplicial map
  \begin{equation*}
    F_{y}\colon \II_{y} \to X.
  \end{equation*}
  These maps induce a simplicial map
  \begin{equation*}
    F\colon \TT Y \to X,
    \qquad
    [y,u] \mapsto F_{y}(u)
  \end{equation*}
  if and only if
  \begin{equation*}
    F_{\lambda^{*}y}(u)=F_{y}(\lambda_{*}u)
  \end{equation*}
  for any \(y\in Y\), any morphism~\(\lambda\colon\TWO_{\lambda^{*}y}\to\TWO_{y}\) in the cube category
  and any simplex~\(u\in\II_{\lambda^{*}y}\).
\end{proposition}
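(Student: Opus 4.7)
The plan is to treat this as the universal property of the quotient construction underlying $\TT Y$. The triangulation was defined as the disjoint union $\bigsqcup_{y\in Y}\II_{y}$ modulo the relations $(\lambda^{*}y,u)\sim(y,\lambda_{*}u)$ coming from the cube category, so a simplicial map out of $\TT Y$ is exactly a family of simplicial maps out of each $\II_{y}$ that is coherent with respect to those identifications. The content of the proposition is to spell out this coherence in terms of the family $(F_{y})$.

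For the ``only if'' direction I would simply apply $F$ to both sides of the identity $[\lambda^{*}y,u]=[y,\lambda_{*}u]$, which holds in $\TT Y$ by construction. The left-hand side evaluates to $F_{\lambda^{*}y}(u)$ and the right-hand side to $F_{y}(\lambda_{*}u)$, so well-definedness of $F$ immediately forces the stated compatibility.

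For the ``if'' direction, I would first assemble the family $(F_{y})$ into a map $\tilde F\colon\bigsqcup_{y\in Y}\II_{y}\to X$ defined by $(y,u)\mapsto F_{y}(u)$. The hypothesis says precisely that $\tilde F$ agrees on the two sides of every generating pair of the equivalence relation defining $\TT Y$; since equality in $X$ is already reflexive, symmetric and transitive, $\tilde F$ is in fact constant on each equivalence class and therefore descends to a well-defined set map $F\colon\TT Y\to X$. To promote $F$ to a simplicial map, I would observe that the face and degeneracy operators on $\TT Y$ act by $d_{i}[y,u]=[y,d_{i}u]$ and $s_{i}[y,u]=[y,s_{i}u]$ (they touch only the simplex in $\II_{y}$, not the cube $y$), so simpliciality of each individual $F_{y}$ transfers directly to $F$.

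I do not expect any serious obstacle here: the proposition is essentially a bookkeeping exercise unpacking the colimit presentation of $\TT Y$. The only point that merits a moment's care is the reduction of well-definedness to the single generating step $(\lambda^{*}y,u)\sim(y,\lambda_{*}u)$ rather than an arbitrary chain of such identifications, but this is automatic once $\tilde F$ is in hand on the disjoint union.
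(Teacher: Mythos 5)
Your proposal is correct and is essentially the paper's own argument, just written out in full: the paper proves this proposition in one sentence by appealing directly to the identifications defining $\TT Y$, which is exactly the universal-property-of-the-quotient reasoning you unpack. Nothing is missing; your remark about the reduction to generating relations is a harmless clarification of the same point.
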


\begin{proof}
  This follows immediately from the identifications made in definition of the triangulation~\(\TT Y\).
\end{proof}

\section{The Szczarba operators}

We write \([n]=\{0,\dots,n\}\) and
\begin{equation}
  \SS_{n}=[n-1]\times[n-2]\times\dots\times[0]
\end{equation}
for~\(n\ge0\); we also write \(\emptyset\) for the unique element of~\(\SS_{0}\).
The degree~\(\deg{\ii}\) of~\(\ii\in\SS_{n}\) is the sum of its components.

Let \(X\) be a simplicial set, \(G\) a simplicial group and \(\tau\colon X_{>0}\to G\) a twisting function.
We refer to~\cite[eq.~(1.1)]{Szczarba:1961},~\cite[Def.~18.3]{May:1968},~\cite[Sec.~1.3]{HessTonks:2006}
or~\cite[Sec.~2.5]{Franz:szczarba2} for the definition of a twisting function.
Based on~\(\tau\) one defines the Szczarba operators
\begin{equation}
  \Sz_{\ii}\colon X_{n}\to G_{n-1}
\end{equation}
of degree~\(-1\) for~\(n\ge1\) and~\(\ii\in\SS_{n-1}\), see~\cite[Def.~5]{HessTonks:2006} or~\cite[eq.~(5.2)]{Franz:szczarba2};
they of course go back to~\cite{Szczarba:1961}.
In terms of these, Szczarba's twisting cochain is given by
\begin{equation}
  \label{eq:szczarba-twisting-cochain}
  \tSz\colon C(X) \to C(G),
  \qquad
  x \mapsto \begin{cases}
    0 & \text{if \(n=0\),} \\
    \Sz_{\emptyset}x - 1 & \text{if \(n=1\),} \\
    \sum_{\ii\in\SS_{n-1}} (-1)^{\deg{\ii}}\,\Sz_{\ii}x & \text{if \(n\ge2\)} \\
  \end{cases}
\end{equation}
where \(n=\deg{x}\).

\begin{remark}
  Like the Szczarba operators, the twisting cochain~\(\tSz\) involves the inversion map on the simplicial group~\(G\).
  Its value on a $2$-simplex~$x\in X$ for instance is
  \begin{equation}
    \tSz(x) = \tau(x)^{-1}\tau(s_{1}d_{0}x)^{-1},
  \end{equation}
  \cf~\cite[eq.~(5.5)]{Franz:szczarba2}.
  Hence \(\tSz\) is not defined for a simplicial monoid~\(M\) instead of a simplicial group~\(G\)
  although a twisting function can be defined in that context. As mentioned in the introduction,
  Cai~\cite{Cai:twisting} considers a modified twisting cochain that does not involve the group inversion.
\end{remark}

Instead of repeating the precise definition of~\(\Sz_{\ii}\), we only state how these operators
interact with face and degeneracy maps.
For this we need to recall Szczarba's definition~\cite[Lemma~3.3]{Szczarba:1961} of a bijection
\begin{equation}
  \label{eq:def-Xi}
  \Xi\colon \SS_{n} \to \bigsqcup_{k+l=n-1} \Shuff(k,l)\times\SS_{k}\times\SS_{l}
\end{equation}
for~\(n\ge1\), where it is understood that \(k\),~\(l\ge0\).\footnote{Szczarba's parameter~\(r\) is \(k+1\) in our notation.}
For~\(n=1\) there is no choice as both domain and target are singletons.
For~\(n\ge2\) and~\(\ii=(i_{1},\dots,i_{n})\in\SS_{n}\) we define
\begin{equation}
  \Xi(\ii) = \bigl((\alpha,\beta),\jj,\kkk\bigr) \in \Shuff(k,l)\times\SS_{k}\times\SS_{l}
\end{equation}
recursively as follows:
For~\(\ii'=(i_{2},\dots,i_{n})\in\SS_{n-1}\) we already have
\begin{equation}
  \Xi(\ii')=\bigl((\alpha',\beta'),\jj',\kkk'\bigr) \in \Shuff(k',l')\times\SS_{k'}\times\SS_{l'}.
\end{equation}
If \(0\le i_{1}\le k'\), we set
\begin{gather}
  k=k'+1,
  \quad
  l = l',
  \quad
  \alpha=\{\,1,\alpha'_{1}+1,\dots,\alpha'_{k'}+1\,\},
  \quad
  \beta=\beta',
  \\* \notag
  \jj=(i_{1},\jj'),
  \quad
  \kkk=\kkk'.
\end{gather}
If \(k'<i_{1}\le n-1\), then we analogously define
\begin{gather}
  k=k',
  \quad
  l = l'+1,
  \quad
  \alpha=\alpha',
  \quad
  \beta=\{\,1,\beta'_{1}+1,\dots,\beta'_{l'}+1\,\},
  \\* \notag
  \jj=\jj',
  \quad
  \kkk=(i_{1}-k'-1,\kkk').
\end{gather}

\begin{proposition}[Szczarba, Hess--Tonks]
  \label{thm:szczarba-d}
  For any~\(n\ge0\),~\(\ii=(i_{1},\dots,i_{n})\in\SS_{n}\) and~\(x\in X_{n+1}\) one has
  \begin{align*}
    d_{0}\Sz_{\ii}x &=\Sz_{(i_{2},\dots,i_{n})}d_{i_{1}+1}x, \\
    d_{k}\Sz_{\ii}x &= d_{k}\Sz_{(i_{1},\dots,i_{k+1},i_{k}-1,\dots,i_{n})}x \qquad\text{for~\(0<k<n\) if \(i_{k}>i_{k+1}\),}\\
    d_{n}\Sz_{\ii}x &= s_{\beta-1}\Sz_{\jj}x(0\dots k+1)\cdot s_{\alpha-1}\Sz_{\kkk}x(k+1\dots n+1),
  \end{align*}
  where \(\Xi(\ii)=\bigl((\alpha,\beta),\jj,\kkk\bigr) \in \Shuff(k,l)\times\SS_{k}\times\SS_{l}\).
\end{proposition}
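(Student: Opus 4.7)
The plan is to induct on $n$ and match the recursive definition of $\Sz_{\ii}$ against the recursive definition of the bijection $\Xi$; since the result is attributed to Szczarba and Hess--Tonks, the aim is really to organize their computations rather than find a new argument. The base case $n=0$ (so $\ii=\emptyset$) reduces to the defining properties of a twisting function, in particular to the identities $d_0\tau(x)=\tau(d_1 x)^{-1}\tau(d_0 x)$ and $d_i \tau(x)=\tau(d_{i+1} x)$ for $i\ge 1$. Each of the three formulas in the proposition will be handled separately.

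For the first formula $d_0\Sz_{\ii}x=\Sz_{(i_2,\dots,i_n)}\,d_{i_1+1}x$, I would use that the leading index $i_1$ in the sequence $\ii$ governs the outermost step in Szczarba's recursive construction, namely the choice of face of $x$ on which $\tau$ is evaluated in the innermost factor. Applying $d_0$ in $G$ and using the simplicial identities $d_0 s_j=s_{j-1}d_0$ for $j\ge 1$ together with $d_0 s_0=\id$, one peels off exactly that step. The remaining expression should, after reindexing, coincide with the application of $\Sz_{(i_2,\dots,i_n)}$ to $d_{i_1+1}x$.

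For the middle formula, the observation is that the sequences $(i_1,\dots,i_k,i_{k+1},\dots)$ and $(i_1,\dots,i_{k+1},i_k-1,\dots)$ differ only by the adjacent pair at positions $k$, $k{+}1$, and that the assumption $i_k>i_{k+1}$ together with the simplicial identity $d_k d_k = d_k d_{k+1}$ is what allows these two choices to collapse after applying $d_k$. This is essentially a bookkeeping identity inside $G$, following from the simplicial relations satisfied by the face and degeneracy maps in the expansion of $\Sz_{\ii}x$.

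The third formula is the structural heart of the statement and, I expect, the main obstacle. Here I would do induction on $n$ and follow the two branches in the definition of $\Xi$ corresponding to $0\le i_1\le k'$ and $k'<i_1\le n-1$. In each branch, computing $d_n\Sz_{\ii}x$ splits the leading step of $\Sz_{\ii}$ off as $\Sz_{(i_2,\dots,i_n)}$ evaluated on one of the faces of $x$, and then applies $d_{n-1}$; by the induction hypothesis this inner $d_{n-1}\Sz_{(i_2,\dots,i_n)}$ is already a product of shuffled Szczarba operators on the front and back faces of the appropriate face of $x$. The delicate bookkeeping is to check that prepending $1$ to $\alpha$ (respectively $\beta$) in the recursion for $\Xi$ matches exactly an additional outermost degeneracy $s_0$ applied to the opposite factor, and that the front/back face decomposition $x=x(0\dots k+1)\mid x(k+1\dots n+1)$ is stable through the induction. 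Matching these shuffle insertions term by term, with the correct factor falling into the left or right half of the product in $G_{n-1}$, is where all the combinatorial subtleties of $\Xi$ come into play.
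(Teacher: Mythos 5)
The paper does not reprove these identities: its ``proof'' is a citation to \cite[Thm.~2.1]{Szczarba:1961} and \cite[Lemma~6]{HessTonks:2006} together with a note that two subscripts in the latter should be swapped. So you are taking a genuinely different route by attempting to reconstruct the argument. Your inductive strategy is indeed the one used in the original sources, and the broad outline is reasonable, but as written it is an outline rather than a proof for a structural reason: the paper deliberately omits the recursive definition of $\Sz_{\ii}$ (``Instead of repeating the precise definition of $\Sz_{\ii}$, we only state how these operators interact with face and degeneracy maps''), and your sketch never recalls it either. Without the explicit formula for $\Sz_{\ii}x$ in terms of $\tau$, face maps, and degeneracies, the steps ``peel off the outermost factor,'' ``this is essentially a bookkeeping identity inside $G$,'' and ``matching these shuffle insertions term by term'' are not verifiable. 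Two smaller issues: the case $n=0$ is vacuous for the first and second formulas (there is no $d_0\colon G_0\to G_{-1}$ and no index $i_1$), so your base case should be $n=1$; and for the middle formula the crucial content is not merely $d_k d_{k+1}=d_k d_k$ but that the decrement $i_k\mapsto i_k-1$ is exactly what compensates for the reindexing after that identity is applied, which again can only be checked against the actual definition of $\Sz_{\ii}$. Filling in the details would require importing the formula from \cite[Def.~5]{HessTonks:2006} or \cite[eq.~(5.2)]{Franz:szczarba2}.
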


\begin{proof}
  These identities, implicit in the proof of~\cite[Thm.~2.1]{Szczarba:1961}, have been made
  explicit in~\cite[Lemma~6]{HessTonks:2006}. Note that in the latter reference
  the subscripts of the degeneracy operators~\(s_{\mu}\) and~\(s_{\nu}\)
  (which are \(s_{\alpha-1}\) and \(s_{\beta-1}\) in our notation) should be swapped.
\end{proof}

In order to state the second result we recall from~\cite[App.~B]{Franz:szczarba2} the map
\begin{equation}
  \label{eq:def-Phi}
  \Phi\colon \SS_{n} \times [n] \to \SS_{n-1} \times [n-1],
  \qquad
  (\ii,p) \mapsto (\jj,q)
\end{equation}
for~\(n\ge1\), which is recursively defined via
\begin{equation}
  \left\{\;
  \begin{alignedat}{4}
    \jj &= (i_{1}-1,\jj'), &\;\;\;  q &= q'+1 & \qquad\text{if\ \ } p &< i_{1}, &\quad\; (\jj',q') &\coloneqq \Phi(\ii',p), \\
    \jj &= \ii', &\;\;\;  q &= 0 & \qquad\text{if\ \ } p & = \rlap{\(i_{1}\)\ \ or\ \ \(i_{1}+1\),} \\
    \jj &= (i_{1},\jj'), &\;\;\;  q &= q'+1 & \qquad\text{if\ \ } p &> i_{1}+1, &\quad\; (\jj',q') &\coloneqq \Phi(\ii',p-1),
  \end{alignedat}
  \right.
\end{equation}
where \(\ii'=(i_{2},\dots,i_{n})\).

The following result will be as crucial for us as \Cref{thm:szczarba-d}.
Besides, it shows that Szczarba's twisting cochain~\(\tSz\)
is actually well-defined on normalized chains.

\begin{proposition}
  \label{thm:szczarba-s}
  Let \(X\) be a simplicial set, and let \(x\in X_{n}\) for some~\(n\ge1\).
  Pick \(\ii\in\SS_{n}\) and~\(0\le p\le n\), and set \((\jj,q)=\Phi(\ii,p)\). Then we have
  \begin{equation*}
    \Sz_{\ii}\,s_{p}\,x = s_{q}\,\Sz_{\jj}\,x.
  \end{equation*}
\end{proposition}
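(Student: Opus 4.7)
The plan is induction on~\(n\), exploiting the parallel recursive structures of~\(\Phi\) and of the Szczarba operators. Recall that \(\Sz_{\ii}\) admits a recursive definition expressing \(\Sz_{(i_{1},\ii')}(x)\) as a product of a value of~\(\tau\) (applied to a front or back face of~\(x\)) with \(\Sz_{\ii'}\) evaluated on certain faces of~\(x\). The three cases defining~\(\Phi\) at~\((\ii,p)\) correspond to the three regimes in which \(s_{p}\) commutes past those face operators, via the simplicial identities
\[
d_{j}\,s_{p} =
\begin{cases}
s_{p-1}\,d_{j} & \text{if } j < p, \\
\id & \text{if } j \in \{p, p+1\}, \\
s_{p}\,d_{j-1} & \text{if } j > p+1.
\end{cases}
\]
For the base case \(n = 1\), only the middle clause of~\(\Phi\) is relevant, and one checks \(\Sz_{(0)}\,s_{p}\,x = s_{0}\,\Sz_{\emptyset}\,x\) for~\(p \in \{0,1\}\) directly from the definition of~\(\Sz_{(0)}\) together with the twisting-function normalization \(\tau(s_{0}\,x) = 1\).

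For the inductive step with \(n \ge 2\) and \(\ii = (i_{1},\ii')\), one expands \(\Sz_{\ii}\,s_{p}\,x\) via its recursive formula and migrates \(s_{p}\) past the outer layer. If \(p < i_{1}\), every face operator appearing in the outer layer has index exceeding~\(p\), so \(s_{p}\) passes through untouched; the inductive hypothesis applied at \((\ii',p)\) supplies \(s_{q'}\,\Sz_{\jj'}\) inside, after which re-wrapping increments \(q'\) by one and decreases \(i_{1}\) to \(i_{1}-1\), exactly matching the first clause of~\(\Phi\). The case \(p > i_{1}+1\) is symmetric and invokes the inductive hypothesis at~\((\ii',p-1)\). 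In the middle case \(p \in \{i_{1}, i_{1}+1\}\), some face operator of the outer layer collides with~\(s_{p}\): the identity \(d_{j}\,s_{p} = \id\) collapses the \(\Sz_{\ii'}\)-factor, and the corresponding \(\tau\)-factor becomes \(\tau\) of a degenerate simplex; using the twisting-function identities \(\tau(s_{0}(-)) = 1\) and \(\tau(s_{i}(-)) = s_{i-1}\,\tau(-)\) for~\(i\ge 1\), the product rearranges to \(s_{0}\,\Sz_{\ii'}\,x\), as prescribed by~\((\jj,q) = (\ii',0)\).

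The main obstacle lies in the middle case, where one must track that the \(\tau\)-factor and the residual degeneracies combine so that the output is precisely \(s_{0}\) applied to \(\Sz_{\ii'}\,x\), with no spurious extra degeneracies. This is simultaneously the mechanism by which \(\tSz\) descends to \emph{normalized} chains, so handling the three cases uniformly via the bijection~\(\Phi\) (as in \cite[App.~B]{Franz:szczarba2}) is what avoids an otherwise delicate combinatorial case analysis involving the recursive structure of~\(\Sz_{\ii}\).
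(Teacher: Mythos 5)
The paper's own ``proof'' of this proposition is a one-line citation to \cite[Prop.~B.2\,(i)]{Franz:szczarba2}, so the comparison here is really against the argument in that appendix. Your inductive strategy---peeling off $i_{1}$ and matching the three clauses defining $\Phi$ to the three regimes of the simplicial identity for $d_{j}\,s_{p}$---is almost certainly the strategy of the cited proof, since $\Phi$ was introduced precisely to encode that recursion. So you have the right idea at the top level.

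However, the sketch as written has a genuine gap. You never pin down the recursive formula for $\Sz_{\ii}$, describing it only as ``a product of a value of $\tau$ (applied to a front or back face of $x$) with $\Sz_{\ii'}$ evaluated on certain faces of $x$.'' This cannot be correct as stated on dimensional grounds alone: $\Sz_{\ii}(s_{p}x)$ lives in $G_{n}$, whereas $\Sz_{\ii'}$ applied to an $n$-simplex lands in $G_{n-1}$, so some specific degeneracy operators must appear in the decomposition, and their indices are exactly the combinatorial data the whole proposition is tracking. Without the precise formula (\cite[eq.~(5.2)]{Franz:szczarba2} or~\cite[Def.~5]{HessTonks:2006}), the claim that the three clauses of $\Phi$ match the three regimes of $d_{j}\,s_{p}$-commutation is an assertion, not an argument. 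The middle case is where this bites hardest: you identify it as the ``main obstacle'' and correctly note that $\tau(s_{0}(-))=1$ must be used to kill a factor, but you do not show that the surviving expression is $s_{0}\,\Sz_{\ii'}x$ with exactly the degeneracy index $q=0$ and nothing else---and moreover both sub-cases $p=i_{1}$ and $p=i_{1}+1$ must be checked to yield this same answer. That bookkeeping (which face operator collides with $s_{p}$, which $\tau$-factor collapses, and how the residual degeneracies compose to $s_{0}$) is the actual content of the cited proof and is what is missing here.
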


\begin{proof}
  See~\cite[Prop.~B.2\,(i)]{Franz:szczarba2}.
\end{proof}

\section{Relating the indices}

For any~\(n\ge0\) we set up a bijection
\begin{equation}
  \PSI\colon \SS_{n}\to S_{n},
  \qquad
  \ii\mapsto \PSI(\ii)=\pi
\end{equation}
\cf~the map~\(\beta\) defined in~\cite[Sec.~3.3]{MinichielloRiveraZeinalian:2023}.
The base case~\(n=0\) is trivial since both~\(\SS_{0}\) and~\(S_{0}\) are singletons.
For~\(n>0\) we define \(\PSI(\ii)=\pi\) recursively via
\begin{equation}
  \label{eq:def-psi-inv}
  \pi(k) = \begin{cases}
    i_{1}+1 & \text{if \(k=1\),} \\
    \pi'(k-1) & \text{if \(k\ge2\) and \(\pi'(k-1)\le i_{1}\),} \\
    \pi'(k-1)+1 & \text{if \(k\ge2\) and \(\pi'(k-1)>i_{1}\)} \\
  \end{cases}
\end{equation}
for~\(\ii=(i_{1},\dots,i_{n})\in\SS_{n}\), where \(\pi'=\PSI(i_{2},\dots,i_{n})\).
Observe that \(\pi'\) is obtained from~\(\pi\) by removing the assignment~\(1\mapsto\pi(1)\).
Expressed differently, formula~\eqref{eq:def-psi-inv} says that
\(\pi(k)\) equals \(i_{k}+1\) plus the number of indices~\(j<k\) such that \(\pi(j)<\pi(k)\).
The inverse function~\(\PSI^{-1}\colon \pi\mapsto\ii\) is therefore given by
\begin{equation}
  \label{eq:def-psi}
  i_{k} = \pi(k) - \# \{\,1\le j\le k\mid \pi(j)\le\pi(k)\,\}
\end{equation}
for~\(1\le k\le n\).

\begin{example}
  We have
  \begin{align}
    \PSI(0,\dots,0) &= \id, \\
    \PSI(n-1,n-2,\dots,0) &= \begin{pmatrix}
      1 & 2 & \cdots & n \\
      n & n-1 & \cdots & 1
    \end{pmatrix}, \\
    \PSI(4,2,0,1,0) &= \begin{pmatrix}
      1 & 2 & 3 & 4 & 5 \\
      5 & 3 & 1 & 4 & 2
    \end{pmatrix}.
  \end{align}
\end{example}

\begin{lemma}
  \label{thm:parity}
  Let \(n\ge0\) and \(\ii\in\SS_{n}\). Then for~\(\pi=\PSI(\ii)\in S_{n }\) we have
  \begin{equation*}
    (-1)^{\deg{\pi}} = (-1)^{\deg{\ii}}
  \end{equation*}
\end{lemma}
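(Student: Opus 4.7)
The plan is to proceed by induction on $n \ge 0$, exploiting the recursive definition~\eqref{eq:def-psi-inv} of $\PSI$. The base case $n = 0$ is trivial since $\SS_{0}$ and $S_{0}$ are singletons and both $\deg \ii$ and $\deg \pi$ vanish.

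For the inductive step, I would set $\ii' = (i_{2}, \ldots, i_{n}) \in \SS_{n-1}$ and $\pi' = \PSI(\ii')$. As already pointed out after~\eqref{eq:def-psi-inv}, the permutation $\pi'$ is obtained from $\pi = \PSI(\ii)$ by removing the assignment $1 \mapsto \pi(1) = i_{1} + 1$; equivalently, $\pi$ arises from $\pi'$ by prepending the value $i_{1} + 1$ in position~$1$ and incrementing every value of $\pi'$ that exceeds $i_{1}$. Writing $\mathrm{inv}$ for the number of inversions of a permutation, so that its sign equals $(-1)^{\mathrm{inv}}$, the crux of the argument is the claim $\mathrm{inv}(\pi) = \mathrm{inv}(\pi') + i_{1}$.

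To establish this, I would observe that the renumbering of the values exceeding $i_{1}$ preserves their relative order, so inversions of $\pi$ not involving position~$1$ correspond bijectively to inversions of $\pi'$. The inversions of $\pi$ that do involve position~$1$ are the indices $k \ge 2$ with $\pi(k) < i_{1} + 1$; via the same identification these correspond to the positions of $\pi'$ taking a value in $\{1, \ldots, i_{1}\}$, of which there are exactly $i_{1}$ since $\pi'$ is a bijection onto $\{1, \ldots, n-1\}$. Combined with the inductive hypothesis this gives
\begin{equation*}
(-1)^{\deg \pi} = (-1)^{\mathrm{inv}(\pi') + i_{1}} = (-1)^{\deg \ii' + i_{1}} = (-1)^{\deg \ii}.
\end{equation*}
I do not expect any real obstacle; the only point requiring care is the preservation of relative orders under the increment step, which is immediate.
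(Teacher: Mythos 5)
Your proof is correct and takes essentially the same approach as the paper: induction on $n$ via the identity $\mathrm{inv}(\pi) = i_1 + \mathrm{inv}(\pi')$ coming from the recursive definition~\eqref{eq:def-psi-inv}. The paper simply asserts the congruence $\deg\pi \equiv (\pi(1)-1) + \deg\pi' \pmod 2$ without elaboration, whereas you spell out the bijection between inversions, but the underlying argument is identical.
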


\begin{proof}
  This follows by induction from the trivial case~\(n=0\):
  In the notation of~\eqref{eq:def-psi-inv}, assume that the claim holds for~\(\ii'\) and~\(\pi'\).
  Then modulo~\(2\) we have
  \begin{equation}
    \deg{\pi} = (\pi(1)-1) + \deg{\pi'} \equiv i_{1} + \deg{\ii'} = \deg{\ii}.
    \qedhere
  \end{equation}
\end{proof}

\begin{lemma}
  \label{thm:prop-psi-1}
  Assume \(n\ge1\) and write~\(\ii'=(i_{2},\dots,i_{n})\in\SS_{n-1}\).
  Then the permutation~\(\pi'=\PSI(\ii')\in S_{n-1}\) is obtained from~\(\pi=\PSI(\ii)\in S_{n}\)
  by removing the assignment \(1\mapsto\pi(1)=i_{1}+1\).
\end{lemma}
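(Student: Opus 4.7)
The plan is simply to unpack both the meaning of ``removing the assignment $1\mapsto\pi(1)$'' and the recursive formula~\eqref{eq:def-psi-inv} defining $\pi=\PSI(\ii)$, and observe that they say the same thing.

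First, I would write out what ``removing the assignment $1\mapsto\pi(1)=i_{1}+1$'' produces. Applying the displayed formula for $\tilde\pi$ (with $i=1$) to $\pi$, the resulting permutation $\tilde\pi\in S_{n-1}$ is given for $1\le j\le n-1$ by
\begin{equation*}
  \tilde\pi(j)=\begin{cases}\pi(j+1)&\text{if }\pi(j+1)<i_{1}+1,\\ \pi(j+1)-1&\text{if }\pi(j+1)>i_{1}+1.\end{cases}
\end{equation*}
(The case $\pi(j+1)=i_{1}+1$ cannot occur because $\pi$ is a bijection and $\pi(1)=i_{1}+1$.) Our task is to show $\tilde\pi(j)=\pi'(j)$ for every such $j$, where $\pi'=\PSI(\ii')$.

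Next I would read off from~\eqref{eq:def-psi-inv}, applied at index $k=j+1$, that
\begin{equation*}
  \pi(j+1)=\begin{cases}\pi'(j)&\text{if }\pi'(j)\le i_{1},\\ \pi'(j)+1&\text{if }\pi'(j)>i_{1}.\end{cases}
\end{equation*}
In the first case, $\pi(j+1)=\pi'(j)\le i_{1}<i_{1}+1$, so $\tilde\pi(j)=\pi(j+1)=\pi'(j)$. In the second case, $\pi(j+1)=\pi'(j)+1\ge i_{1}+2>i_{1}+1$, so $\tilde\pi(j)=\pi(j+1)-1=\pi'(j)$. Either way $\tilde\pi(j)=\pi'(j)$, which is exactly the assertion.

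I do not foresee any real obstacle; the proof is a direct unwinding of definitions, and the only minor thing to watch is the index bookkeeping between the codomain renumbering used in the definition of ``removing an assignment'' and the $+1$ shift built into~\eqref{eq:def-psi-inv}. The case distinction $\pi'(j)\lessgtr i_{1}$ lines up exactly with the case distinction $\pi(j+1)\lessgtr\pi(1)$, which is why the two operations agree on the nose.
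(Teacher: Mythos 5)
Your proof is correct and takes the same approach as the paper, which simply states ``This follows directly from~\eqref{eq:def-psi-inv}'' without spelling out the case analysis; your write-up supplies exactly the details the paper leaves to the reader. (One minor point: the displayed formula in the paper for ``removing the assignment'' has an apparent typo — it reads \(\pi(j-1)\) where the intent, as your unwinding correctly uses, is \(\pi(j+1)\) for the new index \(j\ge i\).)
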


\begin{proof}
  This follows directly from~\eqref{eq:def-psi-inv}.
\end{proof}

\begin{lemma}
  \label{thm:prop-psi-k}
  Let \(n\ge1\), and let \(\ii=(i_{1},\dots,i_{n})\in\SS_{n}\) be such that \(i_{k}>i_{k+1}\) for some \(1\le k<n\).
  Then for the adjacent transposition~\(\tau=(k,k+1)\) one has
  \begin{equation*}
    \PSI(\ii)\circ\tau=\PSI(i_{1},\dots,i_{k-1},i_{k+1},i_{k}-1,i_{k+2},\dots,i_{n}).
  \end{equation*}
\end{lemma}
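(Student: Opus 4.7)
My strategy is to work directly from the closed-form expression~\eqref{eq:def-psi}. The first step will be to observe that, since $\pi$ is a bijection onto $\{1,\dots,n\}$, one has $\#\{1\le j\le n\mid \pi(j)\le\pi(k)\}=\pi(k)$. Plugging this into~\eqref{eq:def-psi} yields the reformulation
\begin{equation*}
  i_k = \#\{\,k<j\le n \mid \pi(j)<\pi(k)\,\},
\end{equation*}
which identifies $\PSI^{-1}(\pi)$ with the ``inversion code'' of $\pi$ read from the right.

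Next, I will use this reformulation to show that the hypothesis $i_k>i_{k+1}$ is equivalent to $\pi(k)>\pi(k+1)$. Splitting off the contribution of the single position $k+1$ gives
\begin{equation*}
  i_k = \epsilon + \#\{\,k+1<j\le n \mid \pi(j)<\pi(k)\,\},
\end{equation*}
where $\epsilon=1$ if $\pi(k+1)<\pi(k)$ and $\epsilon=0$ otherwise, while $i_{k+1}=\#\{k+1<j\le n\mid \pi(j)<\pi(k+1)\}$. If $\pi(k)<\pi(k+1)$, the set appearing in the expression for $i_k$ is contained in that for $i_{k+1}$ and $\epsilon=0$, so $i_k\le i_{k+1}$; in the opposite case the containment reverses and $\epsilon=1$, so $i_k\ge i_{k+1}+1$. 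Consequently $\tilde\pi\coloneqq\pi\circ\tau$ is the permutation obtained from $\pi$ by swapping the values at positions $k$ and $k+1$ and leaving the rest fixed.

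Finally, I will compute the entries of $\tilde\ii=\PSI^{-1}(\tilde\pi)$ componentwise via the inversion-code formula. For $j<k$ or $j>k+1$, the values of $\tilde\pi$ strictly to the right of $j$ form the same multiset as those of $\pi$, so $\tilde i_j=i_j$. For $j=k$ we have $\tilde\pi(k)=\pi(k+1)$, and position $k+1$ now carries $\tilde\pi(k+1)=\pi(k)>\pi(k+1)$, contributing $0$; the remaining positions $j'>k+1$ yield exactly the set counted by $i_{k+1}$, so $\tilde i_k=i_{k+1}$. For $j=k+1$ we have $\tilde\pi(k+1)=\pi(k)$ and the values at positions $j'>k+1$ are unchanged, giving $\tilde i_{k+1}=i_k-\epsilon=i_k-1$. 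Assembling these four cases produces precisely the tuple stated in the lemma.

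The main point requiring care will be the bookkeeping in the last step; once the inversion-code reformulation is in hand, however, the case distinction for $j<k$, $j=k$, $j=k+1$ and $j>k+1$ is routine, and no induction on $n$ is required.
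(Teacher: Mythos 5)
Your proof is correct and takes essentially the same route the paper indicates: the paper's proof simply points to the recursive definition~\eqref{eq:def-psi-inv} or the closed-form~\eqref{eq:def-psi}, and you carry out the computation via the latter, conveniently recast as the right-to-left inversion code $i_k=\#\{k<j\le n\mid\pi(j)<\pi(k)\}$. The case analysis for $j<k$, $j=k$, $j=k+1$, $j>k+1$ is exactly the bookkeeping the paper leaves to the reader.
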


\begin{proof}
  Writing \(\pi=\PSI(\ii)\),
  the assumption means \(\pi(k)>\pi(k+1)\), as can be seen from the recursive definition~\eqref{eq:def-psi-inv}.
  The claim now follows from the same formula or from~\eqref{eq:def-psi}.
\end{proof}

Recall from \Cref{thm:Psi-k-l} the bijection
\begin{equation}
  \Psi_{k,l}\colon \Shuff(k,l)\times S_{k}\times S_{l} \to S_{k+l}
\end{equation}
for~\(k\),~\(l\ge0\).

\begin{proposition}
  \label{thm:sz-bij-s}
  Rewriting the bijection~\(\Xi\) from~\eqref{eq:def-Xi}
  in terms of permutations via the bijection~\(\PSI\) gives the bijection
  \begin{equation*}
    S_{n} \to \bigsqcup_{k+l=n-1} \Shuff(k,l)\times S_{k}\times S_{l},
    \qquad
    \pi \mapsto \Psi_{k,l}^{-1}(\tilde\pi) = \bigl((\alpha,\beta),\sigma,\tau\bigr)
  \end{equation*}
  for~\(n\ge1\),
  where \(k=\pi(n)-1\) and~\(l=n-\pi(n)\), and
  \(\tilde\pi\) is obtained from~\(\pi\) by removing the assignment~\(n\mapsto\pi(n)\).
  Hence \(\alpha\) contains the indices mapped by~\(\pi\) to values less than~\(\pi(n)\),
  and \(\beta\) contains the indices mapped to values greater than~\(\pi(n)\).
\end{proposition}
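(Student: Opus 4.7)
The plan is to argue by induction on $n$. The base case $n = 1$ is immediate since all of $S_1$, $\SS_1$, and $\Shuff(0,0) \times S_0 \times S_0$ are singletons. For the inductive step, fix $\ii = (i_1, \ldots, i_n) \in \SS_n$ with $\pi = \PSI(\ii)$, and write $\ii' = (i_2, \ldots, i_n) \in \SS_{n-1}$, $\pi' = \PSI(\ii')$; by \Cref{thm:prop-psi-1}, $\pi'$ is obtained from $\pi$ by removing the assignment $1 \mapsto \pi(1) = i_1 + 1$. The inductive hypothesis applied to $\ii'$ therefore relates $\Xi(\ii') = ((\alpha',\beta'),\jj',\kkk')$ to $\Psi_{k',l'}^{-1}(\tilde\pi')$, where $\tilde\pi'$ is $\pi'$ with the last position removed.

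The heart of the argument is the observation that the two branches in the recursive definition of $\Xi$ correspond exactly to the mutually exclusive cases $\pi(1) < \pi(n)$ and $\pi(1) > \pi(n)$. Indeed, in the first case removing $\pi(1)$ shifts $\pi(n)$ down by one, so by the inductive hypothesis $k' = \pi'(n-1) - 1 = \pi(n) - 2$, and the inequality $i_1 = \pi(1) - 1 \le \pi(n) - 2 = k'$ places us in the first branch of $\Xi$, giving $k = k' + 1 = \pi(n) - 1$ and $l = l' = n - \pi(n)$. The case $\pi(1) > \pi(n)$ is symmetric and gives the same values of $k$ and $l$ via the second branch. Thus the predicted values of $k$ and $l$ match.

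To identify the shuffle $(\alpha,\beta)$, say in the first case, one combines the recursive formula $\alpha = \{1, \alpha'_1+1, \ldots, \alpha'_{k'}+1\}$ with the inductive description of $\alpha'$ as the set of positions $j' \in \{1,\ldots,n-2\}$ with $\pi'(j') < \pi'(n-1)$. Tracking how the value shifts induced by the removal of position~$1$ interact with this inequality, one verifies that $\{\alpha'_j + 1\}$ equals $\{j \in \{2,\ldots,n-1\} : \pi(j) < \pi(n)\}$; adjoining~$1$, which by assumption satisfies $\pi(1) < \pi(n)$, yields $\alpha = \{j \in \{1,\ldots,n-1\} : \pi(j) < \pi(n)\}$, as required. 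The second case is analogous with $\beta$. For the sub-permutations $\sigma = \PSI(\jj)$ and $\tau = \PSI(\kkk)$, the recursions $\jj = (i_1,\jj')$ or $\kkk = (i_1 - k' - 1, \kkk')$ combined with the recursive formula~\eqref{eq:def-psi-inv} defining $\PSI$ give $\sigma$ (respectively $\tau$) inductively; a short computation, again using the value shifts from the two successive removals, shows $\sigma(r) = \pi(\alpha_r)$ and $\tau(r) = \pi(\beta_r) - k$, which is exactly the content of $\Psi_{k,l}^{-1}(\tilde\pi)$ as described in the proof of \Cref{thm:Psi-k-l}.

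The main obstacle is the bookkeeping required to relate the two descriptions: $\Xi$ proceeds by stripping the \emph{first} coordinate of $\ii$, which corresponds via $\PSI$ to removing the assignment at position~$1$ of $\pi$, whereas the target description of $\Psi_{k,l}^{-1}(\tilde\pi)$ involves removing the assignment at position~$n$ of $\pi$. Each removal shifts the remaining values, and checking that the two strippings commute up to the correct identifications — so that the inductive data about $\tilde\pi'$ correctly encodes the data about $\tilde\pi$ — demands careful case-by-case attention to the relative order of $\pi(1)$ and $\pi(n)$, but is otherwise routine.
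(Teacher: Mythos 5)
Your proposal follows essentially the same route as the paper: induction on $n$, invoking \Cref{thm:prop-psi-1} to relate $\pi$ and $\pi'$ by removal of position~$1$, and splitting into the two cases governed by the relative order of $\pi(1)$ and the pivot (your formulation $\pi(1)<\pi(n)$ versus $\pi(1)>\pi(n)$ is equivalent to the paper's $i_{1}\le k'$ versus $i_{1}>k'$ after unwinding the inductive hypothesis). One small slip: since $\pi(\beta_{r})>\pi(n)$ and $\tilde\pi$ is obtained from $\pi$ by removing position $n$, the decoded component should read $\tau(r)=\tilde\pi(\beta_{r})-k=\pi(\beta_{r})-k-1=\pi(\beta_{r})-\pi(n)$, not $\pi(\beta_{r})-k$; this is a one-line bookkeeping correction that does not affect the structure of the argument.
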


\begin{proof}
  There is nothing to prove for~\(n=1\). Now assume the claim proven for~\(n-1\).
  Let \(\ii\in\SS_{n}\) and set \(\pi=\PSI(\ii)\), \(\pi'=\PSI(\ii')\).
  Set \(k=\pi(n)\), \(l=n-k-1\) and \(((\alpha,\beta),\sigma,\tau)=\Psi_{k,l}^{-1}(\pi)\),
  and analogously for~\(\pi'\). We have to check that these parameters are related
  as described in the inductive definition of~\(\Xi\), taking the definition of~\(\PSI\) into account.

  If \(\pi(1)=i_{1}+1\le k'+1=\pi'(n-1)\), then \(k=\pi(n)-1=\pi'(n-1)=k'+1\).
  Hence \(1\in\alpha\) so that \(\alpha=\{1,\alpha'_{1}+1,\dots,\alpha'_{k'}+1\}\).
  Moreover, \(\sigma\) with the assignment~\(1\mapsto\sigma(1)\) removed equals \(\sigma'\),
  as required by the definition~\(\jj=(i_{1},\jj')\) of~\(\Xi\) together with \Cref{thm:prop-psi-1}.
  Also, \(\beta'=\beta\) and \(\tau=\tau'\).

  If \(\pi(1)=i_{1}+1>k'+1=\pi'(n-1)\), then \(k=\pi(n)-1=\pi'(n-1)-1=k'\).
  Hence \(1\in\beta\) so that \(\beta=\{1,\beta'_{1}+1,\dots,\beta'_{l'}+1\}\).
  Moreover, \(\tau\) with the assignment~\(1\mapsto\tau(1)\) removed equals \(\tau'\).
  Also, \(\alpha=\alpha'\) and \(\sigma=\sigma'\).
\end{proof}

\begin{example}
  If
  \begin{equation}
    \pi = \begin{pmatrix}
      1 & 2 & 3 & 4 & 5 \\
      5 & 3 & 1 & 4 & 2
    \end{pmatrix},
    \quad\text{then}\quad
    k=1,
    \quad
    l=3,
    \quad
    \tilde\pi = \begin{pmatrix}
      1 & 2 & 3 & 4 \\
      4 & 2 & 1 & 3
    \end{pmatrix},
  \end{equation}
  and
  \begin{equation}
    \alpha=\{3\},
    \quad
    \beta=\{1,2,4\},
    \quad
    \sigma=\begin{pmatrix}
      1 \\ 1
    \end{pmatrix},
    \quad
    \tau=\begin{pmatrix}
      1 & 2 & 3 \\
      3 & 1 & 2
    \end{pmatrix}.
  \end{equation}
\end{example}

We also need to translate the map~\(\Phi\) defined in~\eqref{eq:def-Phi} to the language of permutations.

\begin{proposition}
  \label{thm:sz-bij-Phi}
  Let \(\ii\in\SS_{n}\) and \(0\le p\le n\) where \(n\ge1\). Set \((\jj,q)=\Phi(\ii,p)\);
  also write \(\pi=\PSI(\ii)\in S_{n}\) and~\(\tilde\pi=\PSI(\jj)\in S_{n-1}\).
  \begin{enumroman}
  \item \label{p1} If \(p=0\), then \(q+1=\pi^{-1}(1)\), and \(\tilde\pi\) is obtained from~\(\pi\) by removing the assignment~\(q+1\mapsto1\).
  \item \label{p2} If \(0<p<n\), then \(q+1=\min\bigl(\pi^{-1}(p),\pi^{-1}(p+1)\bigr)\), and
    \(\tilde\pi\) is obtained from~\(\pi\) by removing the assignment~\(q+1\mapsto\pi(q+1)\in\{p,p+1\}\).
  \item \label{p3} If \(p=n\), then \(q+1=\pi^{-1}(n)\), and \(\tilde\pi\) is obtained from~\(\pi\) by removing the assignment~\(q+1\mapsto n\).
  \end{enumroman}
\end{proposition}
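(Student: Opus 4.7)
My plan is to proceed by induction on $n$, comparing the recursive definition of $\Phi$ in~\eqref{eq:def-Phi} directly with the recursive definition of $\PSI$ in~\eqref{eq:def-psi-inv}. The base case $n=1$ is immediate: one has $\ii = (0)$ and $\pi = \id$, both admissible values $p \in \{0, 1\}$ fall into the middle branch of~$\Phi$ and give $q = 0$, $\jj = \emptyset$, and since $\pi^{-1}(1) = 1$ all three formulas hold (with cases~\ref{p1} and~\ref{p3} coinciding here).

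For the inductive step, I would set $\ii' = (i_2, \ldots, i_n)$ and $\pi' = \PSI(\ii') \in S_{n-1}$, and recall from \Cref{thm:prop-psi-1} that $\pi'$ is $\pi$ with the assignment $1 \mapsto \pi(1) = i_1+1$ removed. I would then split on the branch of~$\Phi$. The middle branch $p \in \{i_1, i_1+1\}$ is immediate: $q=0$ and $\jj = \ii'$ force $\tilde\pi = \pi'$, and the position $q+1=1$ carries the value $\pi(1) = i_1+1 \in \{p, p+1\}$, which covers the middle case~\ref{p2} as well as the boundary cases~\ref{p1} (when $i_1 = 0$) and~\ref{p3} (when $i_1 = n-1$).

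In the outer branches, $\jj = (i_1 \mp 1, \jj')$ with $(\jj', q') = \Phi(\ii', p^*)$, where $p^* = p$ if $p < i_1$ and $p^* = p-1$ if $p > i_1 + 1$. Here \Cref{thm:prop-psi-1} applied to $\jj$ gives $\tilde\pi(1) \in \{i_1, i_1+1\}$ and identifies $\tilde\pi'$ with $\tilde\pi$ after removing that first assignment, while the inductive hypothesis at $(\ii', p^*)$ describes $q'+1$ and $\tilde\pi'$ relative to $\pi'$. To translate these into statements about $\pi$ and~$\tilde\pi$, I would use two observations: first, values below $i_1+1$ in $\pi'$ correspond to the same values in $\pi$ while values $\geq i_1+1$ shift up by one, so the position $q'+1$ singled out by the hypothesis moves to $q'+2 = q+1$ in~$\pi$ and carries the predicted value from~\ref{p1},~\ref{p2} or~\ref{p3}; second, removing two distinct (position,\,value) pairs from a permutation commutes up to relabelling, so the equality of $\tilde\pi'$ with $\pi'$ minus the assignment at position $q'+1$ forces $\tilde\pi$ to be $\pi$ minus the asserted assignment.

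The main obstacle I anticipate is not conceptual but the bookkeeping needed to match each branch of~$\Phi$ with each of the cases~\ref{p1}--\ref{p3} and the corresponding predicted value $\pi(q+1) \in \{1, p, p+1, n\}$. In boundary subcases such as $p = 0 < i_1$ one must also check, before invoking the shift argument, that $\pi(1) \ne 1$ (which holds because $\pi(1) = i_1+1 \geq 2$). Once a small table of the resulting combinations is drawn, each entry reduces to a one-line index chase and the proposition follows.
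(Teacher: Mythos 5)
Your proposal is correct and takes essentially the same route as the paper: induction on $n$ using \Cref{thm:prop-psi-1} to relate $\pi$ and $\pi'$, matching each branch of $\Phi$ against the shift of positions and values induced by removing the first assignment from $\pi$ (you merely organize by branch of $\Phi$ first and then by case \ref{p1}--\ref{p3}, whereas the paper does the reverse). One small slip: in the outer branches $\jj$ equals $(i_1-1,\jj')$ or $(i_1,\jj')$, not ``$i_1\mp1$''; your subsequent assertion $\tilde\pi(1)\in\{i_1,i_1+1\}$ shows you have the correct values in mind, so this does not affect the argument.
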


\begin{proof}
  We prove the claims by induction on~\(n\), the case~\(n=1\) being trivial.

  \ref{p1}: If \(i_{1}=0\), then \(\pi(1)=1\), \(q=0\) and \(\jj=\ii'\). Hence
  \(\tilde\pi=\PSI(\ii')\) is obtained from~\(\pi\) by removing the assignment~\(q+1=1\mapsto\pi(1)=1\).

  Now assume \(i_{1}>0\). Then \(\jj=(i_{1}-1,\jj')\) and~\(q=q'+1\) where \((\jj',q')=\Phi(\ii',1)\).
  The permutation~\(\pi'\coloneqq\PSI(\ii')\) is obtained from~\(\pi\) by removing the assignment~\(1\mapsto\pi(1)=i_{1}+1>1\).
  By induction, we have \(q'+1=(\pi')^{-1}(1)\), and also that
  \(\tilde\pi'\coloneqq\PSI(\jj')\) is obtained from~\(\pi'\) by removing the assignment~\(q'+1\mapsto1\).
  Hence \(\tilde\pi\) is obtained from~\(\tilde\pi'\) by adding the assignment~\(1\mapsto i_{1}=\pi(1)-1\).
  Equivalently, it is obtained from~\(\pi\) by removing the assignment~\(q+1\mapsto1\).

  \ref{p3}: This is analogous to the previous case. If \(i_{1}=n-1\), then \(\pi(1)=n\), \(q=0\) and \(\jj=\ii'\).
  This means that \(\tilde\pi\) is obtained from~\(\pi\) by removing the assignment~\(q+1=1\mapsto\pi(n)=1\).

  If \(i_{1}<n-1\), then \(\jj=(i_{1},\jj')\) and~\(q=q'+1\) where \((\jj',q')=\Phi(\ii',n-1)\).
  The permutation~\(\pi'\coloneqq\PSI(\ii')\in S_{n-1}\) is obtained from~\(\pi\)
  by removing the assignment \(1\mapsto\pi(1)=i_{1}+1<n\).
  By induction, we have \(q'+1=(\pi')^{-1}(n-1)\), and also that
  \(\tilde\pi'\coloneqq\PSI(\jj')\) is obtained from~\(\pi'\) by removing the assignment~\(q'+1\mapsto n-1\).
  Hence \(\tilde\pi\) is obtained from~\(\tilde\pi'\) by adding the assignment~\(1\mapsto i_{1}+1=\pi(1)\).
  Equivalently, it is obtained from~\(\pi\) by removing the assignment~\(q+1\mapsto n\).

  \ref{p2}: If \(p\in\{i_{1},i_{1}+1\}=\{\pi(1)-1,\pi(1)\}\), then \(q=0\) and~\(\jj=\ii'\).
  Hence \(\tilde\pi\) is obtained from~\(\pi\) by removing the assignment~\(1\mapsto\pi(1)\).
  Moreover, \(q+1=1=\min(\pi^{-1}(p),\pi^{-1}(p+1))\) as claimed.

  If \(p<i_{1}\), then \(\jj=(i_{1}-1,\jj')\) and~\(q=q'+1\) where \((\jj',q')=\Phi(\ii',p)\).
  The permutation~\(\pi'=\PSI(\ii')\) is obtained from~\(\pi\) by removing the assignment~\(1\mapsto\pi(1)=i_{1}+1>p+1\).
  By induction, we have
  \begin{equation}
    q'+1 = \min\bigl((\pi')^{-1}(p),(\pi')^{-1}(p+1)\bigr),
  \end{equation}
  and also that \(\tilde\pi'=\PSI(\jj')\) is obtained from~\(\pi'\) by removing
  the assignment~\(q'+1\mapsto\pi'(q'+1)\le p+1<\pi(1)\). Hence
  \begin{equation}
    q+1 = \min\bigl(\pi^{-1}(p),\pi^{-1}(p+1)\bigr),
  \end{equation}
  and \(\tilde\pi\) is obtained from~\(\tilde\pi'\) by adding the assignment~\(1\mapsto\pi(1)-1\).
  Equivalently, it is obtained from~\(\pi\) by removing the assignment~\(q+1\mapsto\pi'(q'+1)=\pi(q+1)\).

  If \(p>i_{1}+1\), then \(\jj=(i_{1},\jj')\) and~\(q=q'+1\) where \((\jj',q')=\Phi(\ii',p-1)\).
  The permutation~\(\pi'=\PSI(\ii')\) is obtained from~\(\pi\) by removing the assignment~\(1\mapsto\pi(1)=i_{1}+1<p\).
  By induction, we have
  \begin{equation}
    q'+1 = \min\bigl((\pi')^{-1}(p-1),(\pi')^{-1}(p)\bigr),
  \end{equation}
  and also that \(\tilde\pi'=\PSI(\jj')\) is obtained from~\(\pi'\) by removing
  the assignment~\(q'+1\mapsto\pi'(q'+1)\ge p-1\ge\pi(1)\). Hence
  \begin{align}
    q+1 &= q'+2 = \min\bigl((\pi')^{-1}(p-1)+1,(\pi')^{-1}(p)+1\bigr) \\
    \notag &= \min\bigl(\pi^{-1}(p),\pi^{-1}(p+1)\bigr),
  \end{align}
  and \(\tilde\pi\) is obtained from~\(\tilde\pi'\) by adding the assignment~\(1\mapsto\pi(1)\).
  Equivalently, it is obtained from~\(\pi\) by removing the assignment~\(q+1\mapsto\pi'(q'+1)=\pi(q+1)\).
\end{proof}

It will be convenient to change the indexing of the Szczarba operators
from sequences~\(\ii\in\SS_{n}\) to permutations~\(\pi\in S_{n}\), where \(p\colon\SS_{n}\to S_{n}\)
in the bijection introduced in~\eqref{eq:def-psi-inv}. We therefore write
\begin{equation}
  \Sz_{\pi}x = \Sz_{\ii}x
\end{equation}
for~\(\pi=p(\ii)\) and~\(x\in X_{n+1}\).
We state the properties of the Szczarba operators discussed so far in this new notation.

\begin{proposition}
  \label{thm:properties-sz-new-d}
  For~\(x\in X_{n+1}\), \(\tilde\pi\in S_{n}\) and \(1\le i\le n\) we have the following:
  \begin{enumroman}
  \item \label{q1}
    If \(\tilde\pi\) is obtained from~\(\pi\in S_{n-1}\) by adding the assignment~\(1\mapsto i\), then
    \begin{equation*}
      d_{0}\,\Sz_{\tilde\pi}x = \Sz_{\pi} d_{i}\,x.
    \end{equation*}
  \item \label{q2}
    If \(\tilde\pi=\pi\circ(j,j+1)\) for some~\(\pi\in S_{n}\) and~\(0<j<n\), then
    \begin{equation*}
      d_{j}\,\Sz_{\tilde\pi}x = d_{j}\,\Sz_{\pi}x.
    \end{equation*}
  \item \label{q3}
    If \(\tilde\pi\) is obtained from~\(\pi\in S_{n-1}\) by adding the assignment~\(n\mapsto i\), then
    \begin{equation*}
      d_{n}\,\Sz_{\tilde\pi}x = s_{\beta-1}\Sz_{\sigma} x(0\dots i)\cdot s_{\alpha-1}\Sz_{\tau} x(i\dots n+1),
    \end{equation*}
    where \(((\alpha,\beta),\sigma,\tau)=\Psi_{i-1,n-i}^{-1}(\pi)\).
  \end{enumroman}
\end{proposition}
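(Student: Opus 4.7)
The plan is to derive each of the three identities by translating the corresponding identity in \Cref{thm:szczarba-d} from the $\SS_n$-indexing to the new $S_n$-indexing, using the bijection $\PSI$ together with \Cref{thm:prop-psi-1}, \Cref{thm:prop-psi-k} and \Cref{thm:sz-bij-s}. In each case I would write $\ii=\PSI^{-1}(\tilde\pi)\in\SS_n$ and rewrite the hypothesis on $\tilde\pi$ as a condition on $\ii$ before applying the relevant identity in \Cref{thm:szczarba-d}.

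For part~\ref{q1}, \Cref{thm:prop-psi-1} tells us that the permutation obtained from $\tilde\pi$ by removing $1\mapsto\tilde\pi(1)=i_1+1$ is $\PSI(i_2,\dots,i_n)$, so the hypothesis forces $i=i_1+1$ and $\pi=\PSI(i_2,\dots,i_n)$. The first identity of \Cref{thm:szczarba-d} then reads exactly $d_0\Sz_{\tilde\pi}x=\Sz_\pi d_i x$.

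For part~\ref{q2}, I would observe that at least one of the pair $\tilde\pi,\pi$ has a descent at $j$, and that the asserted identity is symmetric under swapping the two, so I may assume $\tilde\pi(j)>\tilde\pi(j+1)$. With $\ii=\PSI^{-1}(\tilde\pi)$, \Cref{thm:prop-psi-k} translates this descent condition into $i_j>i_{j+1}$ and identifies $\pi$ as $\PSI(i_1,\dots,i_{j-1},i_{j+1},i_j-1,i_{j+2},\dots,i_n)$, which is precisely the permutation appearing in the second identity of \Cref{thm:szczarba-d}.

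For part~\ref{q3}, I would let $\Xi(\ii)=((\alpha',\beta'),\jj,\kkk)\in\Shuff(k,l)\times\SS_k\times\SS_l$ with $\ii=\PSI^{-1}(\tilde\pi)$. Applied to $\tilde\pi$ with $\tilde\pi(n)=i$, \Cref{thm:sz-bij-s} gives $k=i-1$ and $l=n-i$ and matches the triple $((\alpha',\beta'),\PSI(\jj),\PSI(\kkk))$ with $\Psi_{i-1,n-i}^{-1}(\pi)=((\alpha,\beta),\sigma,\tau)$. Substituting into the third identity of \Cref{thm:szczarba-d}, and using $k+1=i$ to rewrite the front and back faces as $x(0\dots i)$ and $x(i\dots n+1)$, produces the stated formula. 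All three parts are essentially bookkeeping, since the substantive content has already been isolated in \Cref{thm:prop-psi-1}, \Cref{thm:prop-psi-k} and \Cref{thm:sz-bij-s}; the only subtleties I expect are the symmetry argument in~\ref{q2} and keeping track of the matching of shuffles and of the simultaneous appearance of $\pi$ and $\tilde\pi$ (of different degrees) in \ref{q1} and~\ref{q3}.
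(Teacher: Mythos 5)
Your proposal is correct and follows essentially the same route as the paper: the paper's proof of this proposition is the one-line remark that it follows by combining \Cref{thm:szczarba-d} with \Cref{thm:prop-psi-1}, \Cref{thm:prop-psi-k} and \Cref{thm:sz-bij-s}, which is exactly the translation you carry out. The one small point you add beyond the paper's text — the symmetry observation in part~\ref{q2} that lets you assume $\tilde\pi$ has the descent at~$j$ before invoking \Cref{thm:prop-psi-k} — is the right way to bridge the asymmetric hypothesis $i_j>i_{j+1}$ in \Cref{thm:szczarba-d} with the symmetric statement in~\ref{q2}, and it is indeed the only subtlety worth flagging.
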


Here \((j,j+1)\in S_{n}\) is a transposition, and \(\Psi_{i-1,n-i}\) refers to the bijection from \Cref{thm:Psi-k-l}.
The product in the last displayed formula above is taken in the group~\(G_{n-1}\).

\begin{proof}
  This follows by combining the identities in \Cref{thm:szczarba-d} with \Cref{thm:prop-psi-1},
  \Cref{thm:prop-psi-k} and \Cref{thm:sz-bij-s}, respectively.
\end{proof}

\begin{proposition}
  \label{thm:properties-sz-new-s}
  For~\(x\in X_{n+1}\),~\(\pi\in S_{n+1}\) and~\(\tilde\pi\in S_{n}\) we have the following:
  \begin{enumroman}
  \item \label{qq1}
    If \(\tilde\pi\) is obtained from~\(\pi\) by removing the assignment~\(j\mapsto1\), then
    \begin{equation*}
      s_{j-1}\Sz_{\tilde\pi} x = \Sz_{\pi}s_{0}\,x.
    \end{equation*}
  \item \label{qq2}
    Given \(1\le i\le n\), set \(j=\min\bigl(\pi^{-1}(i),\pi^{-1}(i+1)\bigr)\).
    If \(\tilde\pi\) is obtained from~\(\pi\) by removing the assignment~\(j\mapsto\pi(j)\), then
    \begin{equation*}
      s_{j-1}\Sz_{\tilde\pi} x = \Sz_{\pi}s_{i}\,x.
    \end{equation*}
  \item \label{qq3}
    If \(\tilde\pi\) is obtained from~\(\pi\) by removing the assignment~\(j\mapsto n+1\), then
    \begin{equation*}
      s_{j-1}\Sz_{\tilde\pi} x = \Sz_{\pi}s_{n+1}\,x.
    \end{equation*}
  \end{enumroman}
\end{proposition}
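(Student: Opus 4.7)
The plan is to deduce all three identities as a direct translation of \Cref{thm:szczarba-s} into the permutation indexing, using \Cref{thm:sz-bij-Phi} to reinterpret the combinatorial map~\(\Phi\). This mirrors exactly the way \Cref{thm:properties-sz-new-d} was obtained by combining \Cref{thm:szczarba-d} with \Cref{thm:prop-psi-1}, \Cref{thm:prop-psi-k} and \Cref{thm:sz-bij-s}.

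Concretely, I would set \(\ii=\PSI^{-1}(\pi)\in\SS_{n+1}\), so that \(\Sz_{\pi}=\Sz_{\ii}\), and apply \Cref{thm:szczarba-s} with an appropriately chosen \(p\in\{0,\dots,n+1\}\). This gives
\begin{equation*}
  \Sz_{\pi}\,s_{p}\,x = \Sz_{\ii}\,s_{p}\,x = s_{q}\,\Sz_{\jj}\,x,
\end{equation*}
where \((\jj,q)=\Phi(\ii,p)\). Setting \(\tilde\pi=\PSI(\jj)\in S_{n}\) and \(j=q+1\), the right-hand side becomes \(s_{j-1}\,\Sz_{\tilde\pi}\,x\), matching the left-hand sides of parts~\ref{qq1}--\ref{qq3}. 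To recover each part of the proposition I would specialize to \(p=0\) for~\ref{qq1}, to \(p=i\) with \(1\le i\le n\) for~\ref{qq2}, and to \(p=n+1\) for~\ref{qq3}; in each case \Cref{thm:sz-bij-Phi}, applied with the dimension shift \(n\mapsto n+1\), describes \(\tilde\pi\) as being obtained from~\(\pi\) by the prescribed removal and identifies~\(j\) as claimed.

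The main obstacle is purely notational: keeping track of the dimension shift and verifying that each part of \Cref{thm:sz-bij-Phi} yields precisely the removal pattern stated in the corresponding part of the proposition. Since the three cases of \Cref{thm:sz-bij-Phi} correspond to \(p=0\), \(p\in\{1,\dots,n\}\) and \(p=n+1\) respectively, they exhaust all choices of~\(p\), and no additional computation is required beyond invoking the two cited results.
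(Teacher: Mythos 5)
Your proposal is correct and takes exactly the same route as the paper: the paper's proof of \Cref{thm:properties-sz-new-s} is the single sentence that the claims follow by combining \Cref{thm:szczarba-s} with \Cref{thm:sz-bij-Phi}, which is precisely the translation you describe (with the dimension shift and the choices \(p=0\), \(p=i\), \(p=n+1\) as you indicate).
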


\begin{proof}
  All claims follow by combining \Cref{thm:szczarba-s} with \Cref{thm:sz-bij-Phi}.
\end{proof}

\section{The cubical cobar construction}

Let \(X\) be a \(1\)-reduced simplicial set.
We want to define a certain cubical set associated to~\(X\), called the cubical cobar construction~\(\bbOM X\).
We refer to~\cite[Prop.~4.2]{RiveraZeinalian:2018} or~\cite[Sec.~3.6]{MMRivera:2024} for a less pedestrian way.

We start by defining the following graded set~\(Q(X)\). An element of~\(Q(X)\) of dimension~\(n\) is a pair consisting
of a cube~\(x\in X_{m}\), \(m\ge1\), and a subset~\(I\subset\{2,...,n\}\) such that \(m+\deg{I}=n+1\).
We call such a subset of~\(\{1,\dots,n+1\}\) \newterm{inner}.
It keeps track of formal degeneracy operators, see \eqref{eq:cobarel-x-I-s-seq} below.
We also write the pair~\((x,I)\) as~\(\CobarEl{x}_{I}\) and abbreviate \(\CobarEl{x}_{\emptyset}\) to~\(\CobarEl{x}\).

On~\(Q(X)\) we define degeneracy operators as follows: For~\(\CobarEl{x}\) as before we set
\begin{equation}
  s_{1}\CobarEl{x} = \CobarEl{s_{0}\,x},
  \quad
  s_{i}\CobarEl{x} = \CobarEl{x}_{\{i\}} \quad \text{for~\(2\le i\le m\),}
  \quad
  s_{m+1}\CobarEl{x} = \CobarEl{s_{m+1}\,x}.
\end{equation}
This is extended to all~\(\CobarEl{x}_{I}\) in a way that leads to the cubical relation~\eqref{eq:cube-ss}. Explicitly,
\begin{equation}
  s_{1}\CobarEl{x}_{I} = \CobarEl{s_{0}\,x}_{I+1},
  \quad
  s_{i}\CobarEl{x}_{I} = \CobarEl{x}_{I'} \;\;\text{for~\(2\le i\le n\),}
  \quad
  s_{n+1}\CobarEl{x}_{I} = \CobarEl{s_{m+1}\,x}_{I},
\end{equation}
where \(I+1\) denotes the set~\(\{\,j+1\mid j\in I\,\}\) and
\begin{equation}
  I' = \{\,j\in I\mid j<i\,\} \cup \{i\} \cup \{\,j+1\in I\mid j>i\,\}.
\end{equation}
Writing the elements of~\(I\) as~\(2\le i_{1}<\dots<i_{k}\le n\), we then have
\begin{equation}
  \label{eq:cobarel-x-I-s-seq}
  \CobarEl{x}_{I} = s_{i_{k}}\cdots s_{i_{1}}\CobarEl{x}.
\end{equation}

Next we define the graded set
\begin{equation}
  P(X) = \bigsqcup_{k\ge0} Q(X)^{k}\!\bigm/\sim
\end{equation}
where the identification is done as in the definition~\eqref{eq:def-product-cubical} of the product of cubical sets
(for all adjacent factors simultaneously). We write elements of~\(P(X)\) in the form~\([y_{1},\dots,y_{k}]\)
with~\(y_{1}\),~\dots,~\(y_{k}\in Q(X)\).
Concatenating them gives an associative product with the unique element of length~\(k=0\) as identity element.

We finally define the graded set
\begin{equation}
  \bbOM X = P(X)/{\sim}
\end{equation}
where this time the equivalence relation is generated by
\begin{equation}
  \bigl[y_{1},\dots,y_{k}] \sim \bigl[y_{1},\dots,y_{j-1},y_{j+1},\dots,y_{k}\bigr]
\end{equation}
whenever a component~\(y_{j}\) has dimension~\(0\)
(so that \(y_{j}=\CobarEl{*_{1}}\) for the unique \(1\)-simplex~\(*_{1}\) in the \(1\)-reduced simplicial set~\(X\)).
We write the element of~\(\bbOM X\) represented by~\([\CobarEl{x_{1}},\dots,\CobarEl{x_{k}}]\in P(X)\) as
\begin{equation}
  \CobarEl{x_{1},\dots,x_{k}} \in \bbOM X.
\end{equation}
The multiplication in~\(P(X)\) descends to~\(\bbOM X\).

We now turn \(\bbOM X\) into a cubical monoid by adding the cubical structure maps.
For~\(\CobarEl{x}\in Q(X)\) as before and~\(1\le i\le m\) we define
\begin{align}
  d^{1}_{i} \CobarEl{x} &= \CobarEl{d_{i}\,x}, \\
  \gamma_{i} \CobarEl{x} &= \CobarEl{s_{i}\,x}.
\end{align}
We extend this to all of~\(Q(X)\) using the cubical relations listed in \Cref{sec:cubical}. We then extend these operators as well as
the degeneracy operators~\(s_{i}\) to~\(P(X)\) as done for a product of cubical sets in~\eqref{eq:def-product-cubical}.
From there they descend to~\(\bbOM X\).

We add the remaining face operators~\(d^{0}_{i}\) on~\(\bbOM X\)
via maps defined on~\(Q(X)\). For~\(\CobarEl{x}\in Q(X)\) as before and \(1\le i\le m\) we set
\begin{equation}
  d^{0}_{i}\CobarEl{x} = \bigCobarEl{x(0\dots i), x(i\dots m+1)} \in \bbOM X.
\end{equation}
We again extend this to all of~\(Q(X)\) using the cubical relations. These extensions induce maps~\(P(X)\to\bbOM X\),
which descend to the face operators~\(d^{0}_{i}\colon\bbOM X\to\bbOM X\).
This turns \(\bbOM X\) into a cubical set, called the \newterm{cubical cobar construction} of~\(X\).
The multiplication is compatible with this structure, so that we in fact obtain a cubical monoid.

\medbreak

Let \(\OM\,C(X)\) be the (reduced) cobar construction of the dgc~\(C(X)\).
The result we recall below will justify that
we write elements of~\(\OM\,C(X)\) in the form~\(\CobarEl{x_{1},\dots,x_{k}}\),
where \(x_{1}\),~\dots,~\(x_{k}\) are positive-dimensional simplices in~\(X\).
The cobar construction is a differential graded algebra (dga) with concatenation of elements as product.
Using the homotopy Gerstenhaber structure of~\(C(X)\), one can define a diagonal on~\(\OM\,C(X)\)
that is compatible with the product, so that one actually obtains a differential graded (dg) bialgebra.
(See \cite[\S 3]{Franz:szczarba2} for details.) For our purposes, everything we need to know
about~\(\OM\,C(X)\) is contained in the following statement, compare~\cite[Thm.~3.3]{MMRivera:2024}.\footnote{%
  Contrary to what is stated in~\cite[Sec.~2.4]{MMRivera:2024}, one needs to divide
  out both degeneracies and connections in the definition of~\(C(\bbOM X)\) for~\cite[Thm.~3.3]{MMRivera:2024} to hold.}

\begin{proposition}
  \label{thm:iso-cobars}
  Let \(X\) be a \(1\)-reduced simplicial set.
  The ``identity map'' sending \(\CobarEl{x_{1},\dots,x_{k}}\in\OM\,C(X)\) to~\(\CobarEl{x_{1},\dots,x_{k}}\in C(\bbOM X)\)
  is an isomorphism of dg~bialgebras.
\end{proposition}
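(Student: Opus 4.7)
The plan is to verify the claimed isomorphism at each level of structure in sequence: graded module, algebra, differential, and finally coalgebra.

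First I would exhibit the underlying graded module isomorphism by matching bases. A basis of $\OM\,C(X)$ in a given degree consists of tensors $\CobarEl{x_{1}|\dots|x_{k}}$ with each $x_{j}$ a non-degenerate simplex in~$X_{>0}$. A basis of $C(\bbOM X)$ consists of strings $\CobarEl{x_{1},\dots,x_{k}}$ that are neither degenerate nor folded. Running through the definitions of the operators on~\(Q(X)\) and~\(P(X)\): a string is degenerate iff some component is of the form~\(s_{1}\CobarEl{x_{j}'}=\CobarEl{s_{0}x_{j}'}\), \(s_{m_{j}+1}\CobarEl{x_{j}'}=\CobarEl{s_{m_{j}+1}x_{j}'}\) or \(\CobarEl{x_{j}'}_{I}\) with~\(I\ne\emptyset\); it is folded iff some component is of the form~\(\gamma_{i}\CobarEl{x_{j}'}=\CobarEl{s_{i}x_{j}'}\) with~\(1\le i\le m_{j}\). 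Combining these, the normalization kills exactly those strings in which some~\(x_{j}\) is a degenerate simplex (the length-zero identification accounts for factors of dimension~1). Hence the two bases coincide, which justifies the ``identity'' assignment and gives a graded module isomorphism.

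Second, I would verify compatibility with product and differential, which is a direct computation. The product in both complexes is concatenation, so the identity is manifestly multiplicative. For the differential, it suffices to check on a single generator~\(\CobarEl{x}\) with~\(x\in X_{m+1}\) and then extend by the Leibniz rule, which both differentials satisfy with respect to concatenation. On the cubical side, the definitions of~\(d^{0}_{i}\) and~\(d^{1}_{i}\) immediately give
\begin{equation*}
  d\,\CobarEl{x} = \sum_{i=1}^{m}(-1)^{i}\bigCobarEl{x(0\dots i),x(i\dots m+1)} - \sum_{i=1}^{m}(-1)^{i}\CobarEl{d_{i}x},
\end{equation*}
which, after accounting for normalization (so that \(d_{0}x\) and \(d_{m+1}x\) do not contribute separately), is the standard cobar differential~\(-\CobarEl{dx}\) plus the comultiplicative contribution coming from~\(\Delta x\).

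The main obstacle is the coproduct compatibility, and I would handle it by reducing to a single generator~\(\CobarEl{x}\) and then invoking multiplicativity. The Serre diagonal on~\(C(\bbOM X)\) expands \(\CobarEl{x}\) as a signed sum indexed by shuffles~\((\alpha,\beta)\) of iterated~\(d^{0}_{\beta_{j}}\) and~\(d^{1}_{\alpha_{i}}\) applied to~\(\CobarEl{x}\); unpacking these through the definitions of~\(d^{0}_{i}\) and~\(d^{1}_{i}\) in~\(\bbOM X\) rewrites each term as a product of two strings built from front/back faces of the subsimplices~\(x(j\dots j')\) and applications of the face operators of~\(X\). This must be matched term-by-term with the formula for the diagonal on~\(\OM\,C(X)\) induced by the homotopy Gerstenhaber operations on~\(C(X)\), as spelled out in \cite[\S 3]{Franz:szczarba2}. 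I would carry this matching out on~\(\CobarEl{x}\) and then appeal to the fact that both diagonals are algebra maps with respect to concatenation (together with their compatibility with the unit/augmentation, checked trivially on the empty string) to conclude on all of~\(\OM\,C(X)\); alternatively one can simply cite \cite[Thm.~3.3]{MMRivera:2024}, keeping in mind the footnote correction that both degenerate and folded cubes must be quotiented out for the isomorphism to hold.
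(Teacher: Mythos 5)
Your proposal follows essentially the same route as the paper: identify bases by matching degenerate/folded cubes in~\(\bbOM X\) with degenerate simplices in~\(X\), observe that both products are concatenation, check the differential on a single generator~\(\CobarEl{x}\) and extend by the Leibniz rule, and defer the coproduct compatibility to a reference after indicating the term-by-term matching. Two small remarks. First, your parenthetical ``after accounting for normalization (so that~\(d_0 x\) and~\(d_{m+1}x\) do not contribute separately)'' slightly misattributes the mechanism: what actually happens is that the \(i=1\) and~\(i=m\) terms of the \(d^0\)-sum \emph{do} contribute, but they collapse to~\(\CobarEl{d_0 x}\) and~\(\CobarEl{d_{m+1}x}\) because \(X\) is \(1\)-reduced (so~\(x(0\dots1)=*_1\)) and because length-\(0\) factors are quotiented out in the definition of~\(\bbOM X\); it is not a normalization phenomenon, and these two terms are precisely what supplements the~\(d^1\)-sum to produce the full~\(-\CobarEl{dx}\). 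Second, your fallback of ``simply citing \cite[Thm.~3.3]{MMRivera:2024}'' is circular here, since that theorem \emph{is} the statement being proven; the paper instead outsources the coalgebra check to Baues's geometric-diagonal description together with~\cite[App.~A]{Franz:szczarba2}, which is the non-circular version of your primary plan.
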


\begin{proof}
  The definition of the structure maps for~\(\bbOM X\) entails
  that a cube~\(\CobarEl{x_{1},\dots,x_{k}}\in\bbOM X\) is neither degenerate nor folded
  if and only if all~\(x_{i}\) are non-degenerate simplices, that is, if and only if
  \(\CobarEl{x_{1},\dots,x_{k}}\in\OM\,C(X)\) is non-zero. Hence we get an isomorphism
  of graded \(\kk\)-modules and in fact of graded \(\kk\)-algebras because both products
  are defined via concatenation.

  For an \(n\)-dimensional cube~\(\CobarEl{x}\) we have
  \begin{align}
    d^{0}_{1}\CobarEl{x} &= \bigCobarEl{x(0\dots1),x(1\dots n+1)} = \CobarEl{*_{1},d_{0}\,x} = \CobarEl{d_{0}\,x}, \\
    d^{0}_{n}\CobarEl{x} &= \bigCobarEl{x(0\dots n),x(n\dots n+1)} = \CobarEl{d_{n+1}\,x,*_{1}} = \CobarEl{d_{n+1}\,x}.
  \end{align}
  On the other hand, given that \(X\) is \(1\)-reduced, the reduced diagonal
  \begin{equation}
    \Deltabar\colon \bar C(X) \hookrightarrow C(X) \xrightarrow{\Delta} C(X)\otimes C(X) \twoheadrightarrow \bar C(X) \otimes \bar C(X)
  \end{equation}
  in the reduced chain complex~\(\bar C(X)\) takes the form
  \begin{equation}
    \Deltabar\,x = \sum_{i=2}^{n-1} x(0\dots i) \otimes x(i\dots n+1).
  \end{equation}
  Taken together, these two facts imply that the differential
  of~\(\CobarEl{x}\) in the cubical chain complex~\(C(\bbOM X)\)
  agrees with the differential of~\(\CobarEl{x}=\desusp x\in\OM\,C(X)\),
  \begin{align}
    d_{C(\bbOM X)}\CobarEl{x} &= \sum_{i=1}^{n}(-1)^{i}\,d^{0}_{i}\CobarEl{x} - \sum_{i=1}^{n}(-1)^{i}\,d^{1}_{i}\CobarEl{x} \\
    \notag &= - \CobarEl{d_{0}\,x} + \sum_{i=2}^{n-1}(-1)^{i}\,\bigCobarEl{x(0,\dots,i),x(i,\dots,n+1)} \\
    \notag &\qquad + (-1)^{n}\,\CobarEl{d_{n+1}\,x} - \sum_{i=1}^{n}(-1)^{i}\,\CobarEl{d_{i}\,x} \\
    \notag &= - \CobarEl{d\,x} + (\desusp\otimes\desusp)\,\Deltabar\,x = d_{\OM\,C(X)}\CobarEl{x}.
  \end{align}
  (Here \(\desusp\) denotes the desuspension operator.)

  It follows that the differentials agree on all elements because they are compatible with the products.
  The diagonal on~\(C(\bbOM X)\) is the ``geometric diagonal'' considered by Baues~\cite[Sec.~IV.2]{Baues:1980}.
  As explained in~\cite[App.~A]{Franz:szczarba2}, under the ``identity'' isomorphism
  it agrees with the one on~\(\OM\,C(X)\) induced by the homotopy Gerstenhaber structure on~\(C(X)\).
\end{proof}

\section{The main result}

The goal of this section is to prove the following more precise version of \Cref{thm:intro:main}.

\begin{theorem}
  \label{thm:main}
  Let \(X\) be a \(1\)-reduced simplicial set and \(G\) a simplicial group,
  and let \(\tau\colon X_{>0}\to G\) be a twisting function.
  \begin{enumroman}
  \item \label{thm:main-1}
    The assignment
    \begin{equation*}
      \II_{\CobarEl{x}} \ni u_{\pi} \mapsto \Sz_{\pi}x \in G
    \end{equation*}
    for~\(x\in X_{n+1}\) and~\(\pi\in S_{n}\), \(n\ge0\),
    induces a morphism of simplicial monoids
    \begin{equation*}
      \FFF\colon\TT\bbOM X\to G.
    \end{equation*}
  \item \label{thm:main-2}
    The composition
    \begin{equation*}
      \OM\,C(X) \cong C(\bbOM X) \stackrel{\tt}{\longrightarrow} C(\TT \bbOM X) \stackrel{C(\FFF)}{\longrightarrow} C(G),
    \end{equation*}
    is the Szczarba map~\(\fSz\) induced by Szczarba's twisting cochain~\(\tSz\).
  \end{enumroman}
\end{theorem}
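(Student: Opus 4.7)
The plan for part~\ref{thm:main-1} is to build $\FFF$ in two stages: first on atomic cubes $\CobarEl{x}$, $x\in X_{n+1}$, and then extend multiplicatively to product cubes $\CobarEl{x_{1},\dots,x_{k}}$ using the simplicial group structure on the target. For the first stage, the assignment $u_{\pi}\mapsto\Sz_{\pi}x$ extends uniquely to a simplicial map $\II^{n}\to G$ by \Cref{thm:triang-map}, provided $d_{j}\Sz_{\pi}x=d_{j}\Sz_{\pi\tau}x$ for each adjacent transposition $\tau=(j,j+1)$; this is exactly \Cref{thm:properties-sz-new-d}\,\ref{q2}. For the second stage, given a product cube, \Cref{thm:decomposition-simplex-product-cube} decomposes each $u_{\pi}$ on the product of simplicial cubes into $(s_{\beta-1}u_{\sigma},s_{\alpha-1}u_{\tau})$ (iterated over the factors), and one takes the product in~$G$ of the images of the individual factor simplices. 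Multiplicativity of $\FFF$ is then automatic.

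The main obstacle is the coherence check of \Cref{thm:triangulation-map}: for every morphism $\lambda$ in the cube category and every cube $y$, one must verify $\FFF$ on $(\lambda^{*}y,u)$ agrees with $\FFF$ on $(y,\lambda_{*}u)$. By the monoidal generation of the cube category, it suffices to treat $\lambda\in\{\delta^{1}_{i},\delta^{0}_{i},\sigma_{i},\gamma_{i}\}$ on atomic~$\CobarEl{x}$. Each case pairs one of \Cref{thm:d-u-tilde-pi} and \Cref{thm:s-u-tilde-pi} (describing $\lambda_{*}u_{\pi}$) with a compatible Szczarba identity: $\delta^{1}_{i}$ via \Cref{thm:properties-sz-new-d}\,\ref{q1}; $\delta^{0}_{i}$ via \Cref{thm:properties-sz-new-d}\,\ref{q3}, where the resulting $s_{\beta-1}\Sz_{\sigma}x(0\dots i)\cdot s_{\alpha-1}\Sz_{\tau}x(i\dots n+1)$ is exactly the value produced by the multiplicative extension on the product face $d^{0}_{i}\CobarEl{x}=\CobarEl{x(0\dots i),x(i\dots n+1)}$, the shuffle bijection of \Cref{thm:sz-bij-s} matching the decomposition of \Cref{thm:decomposition-simplex-product-cube}; the degeneracies $\sigma_{i}$ via the three parts of \Cref{thm:properties-sz-new-s}, whose subcases align with the three subcases in the definition of $s_{i}\CobarEl{x}$; and the connections $\gamma_{i}$ via \Cref{thm:properties-sz-new-s}\,\ref{qq2}, the $\min$ in the definition of~$\gamma_{i}$ aligning with the $\min$ in the formula for~$j$. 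The remaining identifications in $P(X)$ and in the quotient $\bbOM X$ (the collapse of~$\CobarEl{*_{1}}$) are consistent with the multiplicative extension, using that $\tau$ vanishes on the unique degenerate $1$-simplex~$*_{1}$ of the $1$-reduced~$X$ and so $\FFF$ sends $\CobarEl{*_{1}}$ to the unit of~$G$.

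For part~\ref{thm:main-2}, the calculation is then immediate: for $x\in X_{n+1}$ with $n\ge 1$,
\[
C(\FFF)\bigl(\tt\,\CobarEl{x}\bigr)
= \sum_{\pi\in S_{n}}(-1)^{\deg{\pi}}\,\Sz_{\pi}x
= \sum_{\ii\in\SS_{n}}(-1)^{\deg{\ii}}\,\Sz_{\ii}x
= \tSz(x)
\]
by \Cref{thm:parity}, while the case $n=0$ matches the second line of~\eqref{eq:szczarba-twisting-cochain} directly. Since $\tt$ is multiplicative by \Cref{thm:triangulation-product-group}, $\FFF$ is multiplicative by construction, and $\fSz$ is multiplicative by~\eqref{eq:Omega-CX-CG}, the identity extends from atomic generators to all of~$\OM\,C(X)$, yielding $C(\FFF)\circ\tt=\fSz$ as claimed.
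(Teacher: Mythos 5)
Your overall architecture matches the paper's: define the candidate map on the simplices $u_\pi$ of each cube $\II_{\CobarEl{x}}$ via $\Sz_\pi x$, invoke \Cref{thm:triang-map} with \Cref{thm:properties-sz-new-d}\,\ref{q2} to get a simplicial map on each such cube, extend multiplicatively, and then verify the coherence of \Cref{thm:triangulation-map} generator-by-generator using \Cref{thm:d-u-tilde-pi}, \Cref{thm:s-u-tilde-pi}, \Cref{thm:properties-sz-new-d}, \Cref{thm:properties-sz-new-s} and \Cref{thm:decomposition-simplex-product-cube}. Part~\ref{thm:main-2} is also handled as in the paper, via \Cref{thm:parity} and multiplicativity. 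However, there are three related gaps that need repair.

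First, you never define $\FF$ on the cubes $\CobarEl{x}_{I}$ with $I\ne\emptyset$. These are genuine elements of $Q(X)$ (hence of $\bbOM X$), and \Cref{thm:triangulation-map} demands a simplicial map $\FF_{y}\colon\II_{y}\to G$ for \emph{every} cube $y$, not just the non-degenerate ones. The paper sets $\FF(\CobarEl{x}_{I};u)\coloneqq\FF(\CobarEl{x};(\sigma_{I})_{*}u)$; without this definition you cannot even state the coherence condition for the cubical degeneracy $\sigma_{i}$ with $2\le i\le m$, since there the target $\lambda^{*}\CobarEl{x}=\CobarEl{x}_{\{i\}}$ is not of the form $\CobarEl{x'}$ for any simplex $x'\in X$.

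Second, and consequently, your claim that ``the three parts of \Cref{thm:properties-sz-new-s}'' align with the three subcases of $s_{i}\CobarEl{x}$ is off: \ref{qq1} and \ref{qq3} do match the boundary cases $s_{1}\CobarEl{x}=\CobarEl{s_{0}\,x}$ and $s_{m+1}\CobarEl{x}=\CobarEl{s_{m+1}\,x}$, but \ref{qq2} belongs to the \emph{connection} $\gamma_{i}\CobarEl{x}=\CobarEl{s_{i}\,x}$, as you correctly state in your next clause. The middle cubical degeneracy $s_{i}\CobarEl{x}=\CobarEl{x}_{\{i\}}$, $2\le i\le m$, uses no Szczarba identity at all: once $\FF(\CobarEl{x}_{\{i\}};u)$ is \emph{defined} as above, the coherence equation is tautological.

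Third, you attribute the consistency of the multiplicative extension over the $P(X)$ identification $(s_{k+1}\,y,y')\sim(y,s_{1}\,y')$ to the vanishing of the twisting function on $*_{1}$. That vanishing is needed only to pass from $P(X)$ to the quotient $\bbOM X$ (where $\CobarEl{*_{1}}$ is collapsed). The $P(X)$ identification itself is secured by the $\sigma_{i}$-compatibility from the previous point: both sides reduce to $\FF(y;(u_{1},\dots,u_{k}))\cdot\FF(y';(u_{k+2},\dots,u_{k+l+1}))$ after dropping the redundant coordinate, which is precisely what the paper's \Cref{thm:F-s-CobarEl-x-u} provides. As written, your argument elides this step, so the well-definedness of the multiplicative extension on $P(X)$ is not actually established.

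Beyond these repairs, the ordering of your steps should be adjusted: the $\sigma_{i}$-compatibility must be established \emph{before} the multiplicative extension is declared well-defined on $P(X)$, rather than afterwards as one item in the generator check.
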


Our strategy for proving \Cref{thm:main} is to define a simplicial map
\begin{equation}
  \label{eq:f-z}
  \FF_{z}\colon \II_{z} \to G,
  \qquad
  u \mapsto \FF(z;u)
\end{equation}
for each cube~\(z\in\bbOM X\) and then to verify that these maps induce the desired map~\(\FFF\),
\cf~\Cref{thm:triangulation-map}. Multiplicativity will be clear from the construction.
We construct the map~\eqref{eq:f-z} in several steps.

For an \((n+1)\)-simplex~\(x\in X\) and an \(n\)-simplex~\(u_{\pi}\) corresponding to a permutation~\(\pi\in S_{n}\) we of course set
\begin{equation}
  \FF(\CobarEl{x};u_{\pi}) = \Sz_{\pi}x.
\end{equation}
We then have
\begin{equation}
  d_{j}\,\FF\bigl(\CobarEl{x};u_{\pi}\bigr) = d_{j}\,\FF\bigl(\CobarEl{x};u_{\pi\circ(j,j+1)}\bigr)
\end{equation}
for all~\(0<j<n\) by \Cref{thm:properties-sz-new-d}\,\ref{q2}.
It therefore follows from \Cref{thm:triang-map} that these assignments induce a simplicial map
\begin{equation}
  \FF_{\CobarEl{x}}\colon \II_{\CobarEl{x}} \to G
  \qquad
  u \mapsto \FF(\CobarEl{x};u)
\end{equation}
for each~\(\CobarEl{x}\in Q(X)\).
We extend \(\FF\) to a map defined on triangulations of all cubes~\(\CobarEl{x}_{I}\in Q(X)\) by setting
\begin{equation}
  \label{eq:def-F-x-I}
  \FF(\CobarEl{x}_{I};u) = \FF\bigl(\CobarEl{x};(\sigma_{I})_{*}u\bigr)
\end{equation}
for any~\(x\) as before and
any inner interval~\(I\subset\{1,\dots,n\}\).
In total we get a map defined on the disjoint union of these simplicial cubes,
\begin{equation}
  \FF \colon \bigsqcup_{\CobarEl{x}_{I}\in Q(X)} \II_{\CobarEl{x}_{I}} \to G,
  \qquad
  \II_{\CobarEl{x}_{I}} \ni u \mapsto \FF\bigl(\CobarEl{x}_{I};u\bigr). 
\end{equation}

\begin{lemma}
  \label{thm:F-s-CobarEl-x-u}
  For any~\(\CobarEl{x}_{I}\in Q(X)\) and any simplex~\(u\in\II_{\CobarEl{x}_{I}}\) we have
  \begin{equation*}
    \FF\bigl(s_{i}\CobarEl{x}_{I};u\bigr)=\FF\bigl(\CobarEl{x}_{I};(\sigma_{i})_{*}\,u\bigr).
  \end{equation*}
\end{lemma}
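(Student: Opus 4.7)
The plan is to split by the three clauses in the definition of the cubical degeneracy $s_i$ on $\CobarEl{x}_I$, writing $n=\deg\CobarEl{x}_I$ and recalling that $x\in X_m$ with $m+|I|=n+1$.

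In the interior case $2\le i\le n$, where $s_i\CobarEl{x}_I=\CobarEl{x}_{I'}$, the claim will follow at once from the defining formula~\eqref{eq:def-F-x-I} together with the cube-category identity $\sigma_{I'}=\sigma_I\circ\sigma_i$; both composites remove precisely the positions $I'\subset\{1,\dots,n+1\}$ from $\TWO^{n+1}$. No Szczarba input is needed here.

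In each boundary case ($i=1$ or $i=n+1$), I first apply~\eqref{eq:def-F-x-I} to both sides of the claim and invoke a commutation identity in the cube category ($\sigma_1\circ\sigma_{I+1}=\sigma_I\circ\sigma_1$ for $i=1$, and $\sigma_m\circ\sigma_I=\sigma_I\circ\sigma_{n+1}$ for $i=n+1$) to reduce it to a statement of the form
\begin{equation*}
  \FF\bigl(\CobarEl{s_0 x};v\bigr)=\FF\bigl(\CobarEl{x};(\sigma_1)_{*}v\bigr)
\end{equation*}
for all $v\in\II_{\CobarEl{s_0 x}}$, and analogously with $\CobarEl{s_m x}$ and $(\sigma_m)_{*}$ for $i=n+1$. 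Since both sides are simplicial in $v$, \Cref{thm:cube-generated-embedding}\,\ref{thm:cube-generated-embedding-1} reduces the check to the generating simplices $v=u_\pi$, $\pi\in S_m$. I then compute $(\sigma_1)_{*}u_\pi=s_{j-1}u_{\tilde\pi}$ via \Cref{thm:s-u-tilde-pi}\,\ref{thm:s-u-tilde-pi-1} with $\tilde\pi$ obtained from $\pi$ by removing the assignment $j\mapsto 1$, whereupon the identity collapses to the Szczarba relation $s_{j-1}\Sz_{\tilde\pi}x=\Sz_\pi s_0 x$ supplied by \Cref{thm:properties-sz-new-s}\,\ref{qq1}. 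The case $i=n+1$ proceeds in exactly parallel fashion, with $s_0$ and~\ref{qq1} replaced by $s_m$ and~\ref{qq3}.

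The genuine content of the lemma is therefore already packaged in \Cref{thm:properties-sz-new-s}, and what remains is essentially bookkeeping. The main subtlety will be tracking the projections $\sigma_I$ as cube-category morphisms between cubes of several distinct dimensions and verifying the commutation identities above by identifying which coordinates of $\TWO^{n+1}$ survive each composition. Once the dimensions are set up correctly, the argument is a straightforward dictionary translating between the cubical and Szczarba indexing conventions.
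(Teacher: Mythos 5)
Your proposal is correct and uses the same essential ingredients as the paper's proof: the cube-category commutation of projections $\sigma_i$ with $\sigma_I$, the reduction to the generating simplices $u_\pi$ via \Cref{thm:cube-generated-embedding}, the translation \Cref{thm:s-u-tilde-pi}\,\ref{thm:s-u-tilde-pi-1} to degenerations of $u_{\tilde\pi}$, and finally \Cref{thm:properties-sz-new-s}\,\ref{qq1} and~\ref{qq3}. The only difference is organizational: the paper first restricts to $I=\emptyset$ and $u=u_\pi$ and then extends in two steps (arbitrary $u$, then arbitrary $I$ via $\sigma_I\lambda=\lambda'\sigma_{I'}$), whereas you dispose of the interior degeneracies for all $I$ and $u$ at once by the identity $\sigma_{I'}=\sigma_I\circ\sigma_i$ and reduce only the two boundary cases first to $I=\emptyset$ and then to generators; this is a mild but genuine tidying, since it isolates cleanly that the interior case is pure cube-category bookkeeping while only $i=1$ and $i=n+1$ carry Szczarba content.
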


\begin{proof}
  For later reference we prefer to write \(\sigma_{i}=\lambda\) in part of the proof.

  We first assume \(I=\emptyset\) and \(u=u_{\pi}\) for some~\(\pi\in S_{n}\) where \(n=\deg{x}-1\).
  For~\(1<i<n+1\) the claimed identity is a special case of the definition~\eqref{eq:def-F-x-I}
  since \(s_{i}\CobarEl{x}=\CobarEl{x}_{\{i\}}\).

  In the case~\(i=1\) let \(\tilde\pi\) be obtained from~\(\pi\) by removing the assignment~\(j\coloneqq\pi^{-1}(1)\mapsto 1\).
  Using \Cref{thm:properties-sz-new-s} and \Cref{thm:s-u-tilde-pi}, we have
  \begin{align}
    \FF(s_{1}\CobarEl{x};u_{\pi}) &= \FF(\CobarEl{s_{0}\,x};u_{\pi}) = \Sz_{\pi} s_{0}\,x = s_{j-1}\,\Sz_{\tilde\pi}\,x \\
    \notag &= s_{j-1}\,\FF(\CobarEl{x};u_{\tilde\pi}) = \FF\bigl(\CobarEl{x};s_{j-1}\,u_{\tilde\pi}\bigr) = \FF\bigl(\CobarEl{x};(\sigma_{1})_{*}u_{\pi}\bigr).
  \end{align}

  For~\(i=n+1\) let \(\tilde\pi\) be obtained from~\(\pi\) by removing the assignment~\(j\coloneqq\pi^{-1}(n+1)\mapsto n+1\).
  Analogously to the previous case, we have
  \begin{align}
    \FF(s_{n+1}\CobarEl{x};u_{\pi}) &= \FF(\CobarEl{s_{n+1}\,x};u_{\pi}) = \Sz_{\pi} s_{n+1}\,x = s_{j-1}\,\Sz_{\tilde\pi}\,x \\
    \notag &= s_{j-1}\,\FF(\CobarEl{x};u_{\tilde\pi}) = \FF\bigl(\CobarEl{x};s_{j-1}\,u_{\tilde\pi}\bigr) = \FF\bigl(\CobarEl{x};(\sigma_{n+1})_{*}u_{\pi}\bigr).
  \end{align}

  We now consider the case where \(I=\emptyset\) is as before, but \(u\in\II_{\lambda^{*}\CobarEl{x}}\) is arbitrary.
  Since simplices of the form~\(u_{\pi}\) generate the triangulation of a cube
  (see \Cref{thm:cube-generated-embedding}\,\ref{thm:cube-generated-embedding-1})
  we have \(u=\mu^{*}u_{\pi}\) for some permutation~\(\pi\) and some simplicial operator~\(\mu^{*}\).
  Using the fact that both~\(\lambda_{*}\) and~\(\FF\) are maps of simplicial sets, we get
  \begin{align}
    \FF(\lambda^{*}\CobarEl{x};u) &= \FF(\lambda^{*}\CobarEl{x};\mu^{*}u_{\pi})
    = \mu^{*} \FF(\lambda^{*}\CobarEl{x};u_{\pi})
    = \mu^{*} \FF(\CobarEl{x};\lambda_{*} u_{\pi}) \\
    \notag &= \FF(\CobarEl{x};\mu^{*} \lambda_{*} u_{\pi})
    = \FF(\CobarEl{x};\lambda_{*}\,\mu^{*} u_{\pi})
    = \FF(\CobarEl{x};\lambda_{*} u).
  \end{align}

  We finally look at the case of an arbitrary inner interval~\(I\) and an arbitrary simplex~\(u\in\II_{\lambda^{*}\CobarEl{x}}\).
  By the transpose of the cubical relation~\eqref{eq:cube-ss} (or directly the definition~\eqref{eq:cubecat:s} of~\(\sigma_{i}\)),
  we have \(\sigma_{I}\lambda=\lambda'\sigma_{I'}\) for some inner subset~\(I'\),
  where \(\lambda'\) is either some~\(\sigma_{i'}\) or the identity map.
  Hence
  \begin{align}
    \FF(\lambda^{*}\CobarEl{x}_{I};u) &= \FF(\lambda^{*}\sigma_{I}^{*}\CobarEl{x};u) = \FF(\sigma_{I'}^{*}(\lambda')^{*}\CobarEl{x};u) \\
    \notag &= \FF((\lambda')^{*}\CobarEl{x};(\sigma_{I'})_{*}u) = \FF(\CobarEl{x};(\lambda')_{*}(\sigma_{I'})_{*}u) \\
    \notag &= \FF(\CobarEl{x};(\sigma_{I})_{*}\lambda_{*}u) = \FF(\sigma_{I}^{*}\CobarEl{x};\lambda_{*}u) = \FF(\CobarEl{x}_{I};\lambda_{*}u),
  \end{align}
  which completes the proof.
\end{proof}

We now extend \(\FF\) to triangulations of cubes in the \(k\)-fold Cartesian set product of~\(Q(X)\) via
\begin{equation}
  \label{thm:def-F-product}
  \FF\bigl(y_{1},\dots,y_{k};(u_{1},\dots,u_{k})\bigr) = \FF(y_{1};u_{1})\cdots \FF(y_{k};u_{k}),
\end{equation}
where \(u=(u_{1},\dots,u_{k})\) is, say, an \(m\)-simplex
in~\(\II_{y_{1}}\times\dots\times\II_{y_{k}}\).
The product on the right-hand side is taken in the group~\(G_{m}\).
It is empty for~\(k=0\), so that \(u\in\II^{0}\) is sent to~\(1\in G_{m}\) in this case.

\begin{lemma}
  The assignment~\eqref{thm:def-F-product} descends to a map
  \begin{align*}
    \FF\colon \bigsqcup_{[y_{1},\dots,y_{k}]\in P(X)} \II_{[y_{1},\dots,y_{k}]} &\to G, \\
    \bigl([y_{1},\dots,y_{k}],(u_{1},\dots,u_{k})\bigr) &\mapsto \FF(y_{1};u_{1})\cdots \FF(y_{k};u_{k}).
  \end{align*}
\end{lemma}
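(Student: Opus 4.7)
The plan is to reduce well-definedness to a single generating identification on $P(X)$ and then to invoke Lemma \ref{thm:F-s-CobarEl-x-u} on the two factors affected by that identification.

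First, I would note that the equivalence relation on $P(X)$ is generated by local swaps of the form
\begin{equation*}
  (y_1, \dots, s_{m+1}\,y_j, y_{j+1}, \dots, y_k) \sim (y_1, \dots, y_j, s_1\,y_{j+1}, \dots, y_k),
\end{equation*}
where $y_j \in Q(X)_m$. The remaining factors contribute identical terms to the product on both sides, so by transitivity the task reduces to verifying invariance of the product of the two affected factors.

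Next I would unpack how a single simplex of the common triangulation $\II^N$ (with $N = n_1 + \dots + n_k + 1$) is split differently on the two sides. On the left the $j$-th component $v^{\mathrm L}$ lies in $\II^{m+1}$ and the $(j+1)$-th component is $u_{j+1}\in\II^{n_{j+1}}$; on the right the $j$-th component is $u_j \in \II^m$ and the $(j+1)$-th component $v^{\mathrm R}$ lies in $\II^{n_{j+1}+1}$. The ``shared'' coordinate of the shuffle is the last coordinate of $v^{\mathrm L}$ on one side and the first coordinate of $v^{\mathrm R}$ on the other, so by construction $(\sigma_{m+1})_*\,v^{\mathrm L} = u_j$ and $(\sigma_1)_*\,v^{\mathrm R} = u_{j+1}$.

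Finally, Lemma \ref{thm:F-s-CobarEl-x-u} yields $\FF(s_{m+1}\,y_j; v^{\mathrm L}) = \FF(y_j; u_j)$ and $\FF(s_1\,y_{j+1}; v^{\mathrm R}) = \FF(y_{j+1}; u_{j+1})$, so both sides of the required equality collapse to the same product of $k$ factors. The one piece of genuine work will be the bookkeeping that matches up the two splittings of the simplex in $\II^N$ and verifies that the coordinates forgotten by $\sigma_{m+1}$ and $\sigma_1$ really do correspond; once that is pinned down, the conclusion is immediate from the previous lemma.
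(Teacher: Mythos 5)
Your argument is correct and is essentially the same as the paper's proof: reduce to the generating relation $(s_{m+1}\,y_j,\,y_{j+1})\sim(y_j,\,s_1\,y_{j+1})$, observe that both sides split the given simplex of $\II^N$ into factors that agree after applying $(\sigma_{m+1})_*$ on one side and $(\sigma_1)_*$ on the other (both forgetting the same shared coordinate), and invoke \Cref{thm:F-s-CobarEl-x-u}. The paper presents it for two factors $y,y'$ with a short chain of equalities and remarks that more factors are analogous; your version treats the general adjacent pair directly, but the key lemma and the bookkeeping are identical.
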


\begin{proof}
  Consider \(y\),~\(y'\in Q(X)\), say with \(\deg{y}=k\) and~\(\deg{y'}=l\).
  By \Cref{thm:F-s-CobarEl-x-u} together with the definition of~\((\sigma_{i})_{*}\) as the projection map dropping the \(i\)-th factor,
  we have for any simplex~\(u=(u_{1},\dots,u_{k+l+1})\in\II^{k+l+1}\) the identity
  \begin{align}
    \FF\bigl((s_{k+1}y, y');u\bigr) &= \FF\bigl(s_{k+1}y;(u_{1},\dots,u_{k+1})\bigr) \cdot \FF\bigl(y';(u_{k+2},\dots,u_{k+l+1})\bigr) \\
    \notag &= \FF\bigl(y;(\sigma_{k+1})_{*}(u_{1},\dots,u_{k+1})\bigr) \cdot \FF\bigl(y';(u_{k+2},\dots,u_{k+l+1})\bigr) \\
    \notag &= \FF\bigl(y;(u_{1},\dots,u_{k})\bigr) \cdot \FF\bigl(y';(u_{k+2},\dots,u_{k+l+1})\bigr) \\
    \notag &= \FF\bigl(y;(u_{1},\dots,u_{k})\bigr) \cdot \FF\bigl(y';(\sigma_{1})_{*}(u_{k+1},\dots,u_{k+l+1})\bigr) \\
    \notag &= \FF\bigl(y;(u_{1},\dots,u_{k})\bigr) \cdot \FF\bigl(s_{1}y';(u_{k+1},\dots,u_{k+l+1})\bigr) \\
    \notag &= \FF\bigl((y, s_{1}y');u\bigr).
  \end{align}
  The case of more than two factors is analogous.
\end{proof}

The map~\(\FF\) passes from triangulations of cubes in~\(P(X)\) to those of cubes in~\(\bbOM X\)
because we have \(\FF(y;u)=1\in G\) whenever \(y\in Q(X)\) is a \(0\)-cube.
As a result, we now have a map
\begin{equation}
  \FF\colon \bigsqcup_{z\in\bbOM X} \II_{z} \to G.
\end{equation}
To obtain the map~\(f\colon\TT\bbOM X\to G\),
we must show that \(F\) is compatible with the identifications
made in the definition of the triangulation~\(\TT\bbOM X\).
So we need
\begin{equation}
  \label{eq:F-lambda-z-u}
  \FF(\lambda^{*}z;u)=\FF(z;\lambda_{*}u)
\end{equation}
to hold for any \(n\)-cube~\(z\in\bbOM X\), any~\(\lambda\colon\TWO_{\lambda^{*}z}\to\TWO_{z}\) and any simplex~\(u\in\II_{\lambda^{*}z}\).
In the special case~\(z=\CobarEl{x}_{I}\) and~\(\lambda=\sigma_{i}\), this has already been achieved in \Cref{thm:F-s-CobarEl-x-u}.

\begin{lemma}
  \label{thm:F-d-gamma-CobarEl-x-u-pi}
  Let \(\CobarEl{x}_{I}\in Q(X)\), and let \(\lambda=\delta^{\epsilon}_{i}\) or~\(\lambda=\gamma_{i}\) for some~\(i\) and~\(\epsilon\).
  Then for any simplex~\(u\in\II_{\lambda^{*}\CobarEl{x}_{I}}\) we have
  \begin{equation*}
    \FF\bigl(\lambda^{*}\CobarEl{x}_{I};u\bigr)=\FF\bigl(\CobarEl{x}_{I};\lambda_{*}u\bigr).
  \end{equation*}
\end{lemma}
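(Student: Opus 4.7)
The plan is to mirror the three-stage structure of the proof of~\Cref{thm:F-s-CobarEl-x-u}. First, I would handle the core case~\(I=\emptyset\) and~\(u=u_{\pi}\) by a direct computation for each of the three kinds of~\(\lambda\). Second, still with~\(I=\emptyset\), I would extend to an arbitrary simplex~\(u\in\II_{\lambda^{*}\CobarEl{x}}\) by writing~\(u=\mu^{*}u_{\pi}\) and invoking the fact that each~\(\FF_{\CobarEl{x}}\) is already a simplicial map on~\(\II_{\CobarEl{x}}\). Finally, I would pass to arbitrary~\(I\) by rewriting~\(\sigma_{I}\lambda=\lambda'\sigma_{I'}\) via the cubical relations; whenever this commutation produces an additional~\(\sigma_{i'}\) factor, \Cref{thm:F-s-CobarEl-x-u} absorbs it, leaving a cleaner instance of the same identity.

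The cases~\(\lambda=\delta^{1}_{i}\) and~\(\lambda=\gamma_{i}\) are routine, each reducing to a single Szczarba identity applied to~\(u_{\pi}\). For~\(\delta^{1}_{i}\), where \(\lambda^{*}\CobarEl{x}=\CobarEl{d_{i}x}\), \Cref{thm:d-u-tilde-pi}\,(i) writes \((\delta^{1}_{i})_{*}u_{\pi}=d_{0}u_{\tilde\pi}\) with \(\tilde\pi\) obtained from~\(\pi\) by adding the assignment~\(1\mapsto i\), and part~\ref{q1} of~\Cref{thm:properties-sz-new-d} yields
\begin{equation*}
  \FF\bigl(\CobarEl{x};(\delta^{1}_{i})_{*}u_{\pi}\bigr)=d_{0}\,\Sz_{\tilde\pi}x=\Sz_{\pi}d_{i}x=\FF\bigl(\CobarEl{d_{i}x};u_{\pi}\bigr).
\end{equation*}
For~\(\gamma_{i}\), where \(\lambda^{*}\CobarEl{x}=\CobarEl{s_{i}x}\), \Cref{thm:s-u-tilde-pi}\,\ref{thm:s-u-tilde-pi-2} writes \((\gamma_{i})_{*}u_{\pi}=s_{j-1}u_{\tilde\pi}\) with \(j=\min(\pi^{-1}(i),\pi^{-1}(i+1))\), and part~\ref{qq2} of~\Cref{thm:properties-sz-new-s} closes the case analogously.

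The hard part will be~\(\lambda=\delta^{0}_{i}\), since here \(\lambda^{*}\CobarEl{x}\) is no longer a single element of~\(Q(X)\) but the product~\(\bigCobarEl{x(0\dots i),x(i\dots n+1)}\) in~\(\bbOM X\); the left-hand side must therefore be unpacked via the multiplicative definition~\eqref{thm:def-F-product}. The observation that makes the two sides match is that one and the same combinatorial decomposition governs both. On one hand, \Cref{thm:d-u-tilde-pi}\,(ii) writes \((\delta^{0}_{i})_{*}u_{\pi}=d_{n}u_{\tilde\pi}\) where \(\tilde\pi\in S_{n}\) adds the assignment~\(n\mapsto i\) to~\(\pi\in S_{n-1}\), and part~\ref{q3} of~\Cref{thm:properties-sz-new-d} expands
\begin{equation*}
  d_{n}\,\Sz_{\tilde\pi}x=s_{\beta-1}\,\Sz_{\sigma}x(0\dots i)\cdot s_{\alpha-1}\,\Sz_{\tau}x(i\dots n+1)
\end{equation*}
with \(\bigl((\alpha,\beta),\sigma,\tau\bigr)=\Psi_{i-1,n-i}^{-1}(\pi)\). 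On the other hand, \Cref{thm:decomposition-simplex-product-cube} says that under the decomposition~\(\II^{n-1}=\II^{i-1}\times\II^{n-i}\) the simplex~\(u_{\pi}\) corresponds to the pair~\((s_{\beta-1}u_{\sigma},s_{\alpha-1}u_{\tau})\) for precisely these same parameters, so that evaluating~\(\FF\) on the product cube via~\eqref{thm:def-F-product} produces the same expression. The two sides therefore agree, completing the proof.
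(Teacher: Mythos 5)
Your proposal is correct and follows essentially the same route as the paper's proof: the same three-stage reduction (core case with $I=\emptyset$ and $u=u_{\pi}$, extension to arbitrary $u$, extension to arbitrary $I$), the same appeals to \Cref{thm:d-u-tilde-pi}, \Cref{thm:s-u-tilde-pi}, \Cref{thm:properties-sz-new-d}, \Cref{thm:properties-sz-new-s}, and \Cref{thm:decomposition-simplex-product-cube}, and the same recognition that $\delta^{0}_{i}$ is the only case requiring the multiplicative definition~\eqref{thm:def-F-product} to unpack the product cube in $\bbOM X$. Nothing to correct.
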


\begin{proof}
  As in the proof of \Cref{thm:F-s-CobarEl-x-u}, we start by assuming \(I=\emptyset\)
  and \(u=u_{\pi}\) for some~\(\pi\in S_{n}\), where \(n=\deg{x}-1\).

  Under these assumptions, we first consider the case~\(\lambda=\delta^{1}_{i}\).
  If \(\tilde\pi\) is obtained from~\(\pi\) by adding the assignment~\(1\mapsto i\), we get
  \begin{align}
    \FF(d^{1}_{i}\,\CobarEl{x};u_{\pi}) &= \FF(\CobarEl{d_{i} x};u_{\pi}) = \Sz_{\pi} d_{i}\,x = d_{0}\,\Sz_{\tilde\pi}\,x \\
    \notag &= d_{0}\,\FF(\CobarEl{x};u_{\tilde\pi}) = \FF(\CobarEl{x};d_{0}\,u_{\tilde\pi}) = \FF\bigl(\CobarEl{x};(\delta^{1}_{i})_{*}\,u_{\pi}\bigr),
  \end{align}
  where we have used \Cref{thm:properties-sz-new-d} and \Cref{thm:d-u-tilde-pi}.

  We now come to the case~\(\lambda=\delta^{0}_{i}\).
  Under the bijection from \Cref{thm:Psi-k-l}, the permutation~\(\pi\in S_{n-1}\)
  corresponds to~\(((\alpha,\beta),\sigma,\tau)\), where \(\sigma\in S_{i-1}\), \(\tau\in S_{n-i}\)
  and \((\alpha,\beta)\) is an \((i-1,n-i)\)-shuffle.
  The decomposition~\(\II^{n-1}=\II^{i-1}\times\II^{n-i}\) gives a decomposition of~\(u_{\pi}\)
  into the \((n-1)\)-simplices~\(u_{1}\) and~\(u_{2}\). By \Cref{thm:decomposition-simplex-product-cube}, we have
  \begin{equation}
    u_{1} = s_{\beta-1}\,u_{\sigma}
    \qquad\text{and}\qquad
    u_{2} = s_{\alpha-1}\,u_{\tau}.
  \end{equation}
  Using these identities together with \Cref{thm:properties-sz-new-d} and \Cref{thm:d-u-tilde-pi} as before gives
  \begin{align}
    \FF(d^{0}_{i}\,\CobarEl{x};u_{\pi}) &= \FF(\CobarEl{x(0\dots i),x(i\dots n+1)};(u_{1},u_{2})) \\
    \notag &= \FF\bigl(\CobarEl{x(0\dots i)};u_{1}\bigr)\cdot \FF\bigl(\CobarEl{x(i\dots n+1)};u_{2}\bigr) \\
    \notag &= \FF\bigl(\CobarEl{x(0\dots i)};s_{\beta-1}\,u_{\sigma}\bigr)\cdot \FF\bigl(\CobarEl{x(i\dots n+1)};s_{\alpha-1}\,u_{\tau}\bigr) \\
    \notag &= s_{\beta-1}\Sz_{\sigma}x(0\dots i) \cdot s_{\alpha-1}\Sz_{\tau}x(i\dots n+1) \\
    \notag &= d_{n} \Sz_{\tilde\pi} x = d_{n} \FF(\CobarEl{x};u_{\tilde\pi}) = \FF(\CobarEl{x};d_{n}\,u_{\tilde\pi}) \\
    \notag &= \FF\bigl(\CobarEl{x};(\delta^{0}_{i})_{*}\,u_{\pi}\bigr).
  \end{align}

  We now look at the case~\(\lambda=\gamma_{i}\),  still assuming \(I=\emptyset\) and~\(u=u_{\pi}\).
  We set \(j=\min(\pi^{-1}(i),\pi^{-1}(i+1))\), and we let \(\tilde\pi\) be obtained from~\(\pi\)
  by removing the assignment~\(j\mapsto\pi(j)\). This time we get
  \begin{align}
    \FF(\gamma_{i}\,\CobarEl{x};u_{\pi}) &= \FF(\CobarEl{s_{i}\,x};u_{\pi}) = \Sz_{\pi} s_{i}\,x = s_{j-1}\,\Sz_{\tilde\pi}\,x \\
    \notag &= s_{j-1}\,\FF(\CobarEl{x};u_{\tilde\pi}) = \FF\bigl(\CobarEl{x};s_{j-1}\,u_{\tilde\pi}\bigr) = \FF\bigl(\CobarEl{x};(\gamma_{i})_{*}u_{\pi}\bigr).
  \end{align}

  The extension to arbitrary simplices~\(u\in\II_{\CobarEl{x}}\) is done exactly as in the proof of \Cref{thm:F-s-CobarEl-x-u},
  still under the assumption~\(I=\emptyset\). The extension to an arbitrary inner interval~\(I\) is also analogous
  to the case~\(\lambda=s_{i}\) considered in \Cref{thm:F-s-CobarEl-x-u}.
  In fact, by the transposes of cubical relations or the definition of the morphisms in the cube category,
  we have \(\sigma_{I}\lambda=\lambda'\sigma_{I'}\) for some inner subset~\(I'\)
  and some morphism~\(\lambda'\) which is either some~\(\delta^{\epsilon}_{i'}\), some~\(\gamma_{i'}\) or the identity map.
  Hence the argument presented for \Cref{thm:F-s-CobarEl-x-u} carries over to the present context.
  This completes the proof.
\end{proof}

For~\(\lambda=\delta^{\epsilon}_{i}\),~\(\sigma_{i}\) or~\(\gamma_{i}\),
the identity~\eqref{eq:F-lambda-z-u} now follows for products by the multiplicativity of~\(\FF\).
If \(z=z_{1}z_{2}\), then \(\lambda^{*}z = (\lambda_{1}^{*}z_{1})(\lambda_{2}^{*}z_{2})\)
for some morphisms~\(\lambda_{1}\) and~\(\lambda_{2}\) in the cube category,
one of which is of the same kind as~\(\lambda\) and the other one the identity map. Hence
if \eqref{eq:F-lambda-z-u} holds for pairs triples~\((z_{1},\lambda_{1})\),~\((z_{2},\lambda_{2})\)
and all simplices~\(u_{1}\),~\(u_{2}\), then we also have for a simplex~\(u=(u_{1},u_{2})\)
\begin{align}
  \FF(\lambda^{*}z;u) &= \FF\bigl(\lambda_{1}^{*}z_{1},\lambda_{2}^{*}z_{2};(u_{1},u_{2})\bigr)
  = \FF(\lambda_{1}^{*}z_{1};u_{1})\,\FF(\lambda_{2}^{*}z_{2};u_{2}) \\
  \notag &= \FF(z_{1};(\lambda_{1})_{*}u_{1})\,\FF(z_{2};(\lambda_{2})_{*}u_{2}) \\
  \notag &= \FF\bigl(z_{1},z_{2};(\lambda_{1},\lambda_{2})_{*}(u_{1},u_{2})\bigr) = \FF(z;\lambda_{*}u).
\end{align}
Because all elements of~\(\bbOM X\) are products of elements of the form~\(z=\CobarEl{x}_{I}\),
this establishes \eqref{eq:F-lambda-z-u} for all~\(z\in\bbOM X\) and all simplices~\(u\)
provided that \(\lambda=\delta^{\epsilon}_{i}\),~\(\sigma_{i}\) or~\(\gamma_{i}\).
(For the product \(z=\CobarEl{}\) of length~\(0\), any \(m\)-simplex~\(u\) is sent to~\(1\in G_{m}\) anyway.)

The case of general~\(\lambda\) now follows easily: If the identity~\eqref{eq:F-lambda-z-u} holds for~\(\lambda\) and~\(\mu\),
then it holds for~\(\lambda\,\mu\) since
\begin{align}
  \FF((\lambda\,\mu)^{*}z;u) &= \FF(\mu^{*}\lambda^{*}z;u) = \FF(\lambda^{*}z;\mu_{*}u) \\
  \notag &= \FF(z;\lambda_{*}\mu_{*}u) = \FF(z;(\lambda\,\mu)_{*}u).
\end{align}
As the morphisms in the cube category are generated by the maps~\(\sigma_{i}\),~\(\delta^{\epsilon}_{i}\) and~\(\gamma_{i}\),
this shows that we get a well-defined simplicial map~\(f\colon\TT\bbOM X\to G\).
That \(f\) is multiplicative follows from the definition~\eqref{thm:def-F-product}
together with the compatibility of the triangulation functor with products of cubical sets
(\Cref{thm:triangulation-product}).
This completes the proof of part~\ref{thm:main-1} of \Cref{thm:main}.

\medbreak

We finally come to part~\ref{thm:main-2}. The composition
\begin{equation}
  \OM\,C(X) \cong C(\bbOM X) \stackrel{\tt}{\longrightarrow} C(\TT \bbOM X) \stackrel{C(\FFF)}{\longrightarrow} C(G),
\end{equation}
is determined by the assignment
\begin{equation}
  \CobarEl{x} \mapsto \sum_{\pi\in S_{n}} (-1)^{\deg{\pi}}\Sz_{\pi}x
\end{equation}
for~\(x\in X_{n+1}\). A look at~\eqref{eq:szczarba-twisting-cochain} shows
that this is the same as the formula~\eqref{eq:Omega-CX-CG} for the Szczarba map;
the signs agree by \Cref{thm:parity}. Note that if \(x\) is a \(1\)-simplex,
then \(\CobarEl{x}\) is sent to~\(0\) by both maps since we assume \(X\) to be \(1\)-reduced.
This establishes part~\ref{thm:main-2} of \Cref{thm:main} and completes the proof.

\end{document}